\colorlet{texcscolor}{blue!50!black}
\colorlet{texemcolor}{red!70!black}
\colorlet{texpreamble}{red!70!black}
\colorlet{codebackground}{black!25!white!25}
\lstdefinestyle{siamlatex}{%
  style=tcblatex,
  texcsstyle=*\color{texcscolor},
  texcsstyle=[2]\color{texemcolor},
  keywordstyle=[2]\color{texemcolor},
  moretexcs={cref,Cref,maketitle,mathcal,text,headers,email,url},
}
\DeclareTotalTCBox{\code}{ v O{} }
{ %fontupper=\ttfamily\color{texemcolor},
  fontupper=\ttfamily\color{black},
  nobeforeafter,
  tcbox raise base,
  colback=codebackground,colframe=white,
  top=0pt,bottom=0pt,left=0mm,right=0mm,
  leftrule=0pt,rightrule=0pt,toprule=0mm,bottomrule=0mm,
  boxsep=0.5mm,
  #2}{#1}
\newtheorem{assumption}{Assumption}
\let\footnote=\endnote
\newcommand*{\xbf}{{\mathbf x}}
\newcommand*{\x}{{\mathbf x}}
\newcommand*{\ubf}{{\mathbf u}}
\newcommand*{\ybf}{{\mathbf y}}
\newcommand*{\y}{{\mathbf y}}
\newcommand*{\sgn}{{\text{sgn}}}
\newcommand*{\zbf}{{\mathbf z}}
\newcommand*{\vbf}{{\mathbf v}}
\newcommand*{\z}{{\mathbf z}}
\newcommand*{\R}{{\mathbb R}}
\newcommand*{\bR}{{\mathbb R}}
\newcommand*{\E}{{\mathbb E}}
\newcommand*{\bE}{{\mathbb E}}
\newcommand*{\lf}{{\underline f}}
\def\be{\begin{enumerate}}
\def\ee{\end{enumerate}}
\newcommand*{\eh}{{\hat{\epsilon}}}
\newcommand{\sigmainf}{\sigma_{\infty}}
\newcommand{\hrho}{\hat{\rho}}
\def\sO{\mathcal{O}}
\def\hx{\mathbf{\hat x}}
\def\prox{\rm prox}
\def\sN{\mathcal{N}}
\def\hrho{\hat \rho}
\def\conv{\rm conv}
\def\prox{\rm prox}
\def\tf{\tilde{f}}
\def\tg{\tilde{g}}
\begin{document}

\title{Stochastic First-Order Methods with Non-smooth and
Non-Euclidean Proximal Terms for Nonconvex
High-Dimensional Stochastic Optimization
\thanks{Research is supported from HKU-IDS start-up fund; and Guangdong Province Fundamental and Applied Fundamental Research Regional Joint Fund, 2022B1515130009.}
}
% \subtitle{Do you have a subtitle?\\ If so, write it here}

%\titlerunning{Short form of title}        % if too long for running head

\author{Yue Xie     \and Jiawen Bi    \and
        Hongcheng Liu %etc.
}

%\authorrunning{Short form of author list} % if too long for running head

\institute{Yue Xie \at
              Department of Mathematics and Musketeers Foundation Institute of Data Science, The University of Hong Kong, Pokfulam, Hong Kong.\\
            %   Tel.: +123-45-678910\\
            %   Fax: +123-45-678910\\
              \email{yxie21@hku.hk}           %  \\
%             \emph{Present address:} of F. Author  %  if needed
           \and
           Jiawen Bi \at
              Department of Mathematics, The University of Hong Kong, Pokfulam, Hong Kong.\\
            %   Tel.: +123-45-678910\\
            %   Fax: +123-45-678910\\
              \email{jwbi@hku.hk}
           \and
           Hongcheng Liu \at
              Department of Industrial and Systems Engineering, University of Florida \\
              \email{hliu@ise.ufl.edu} 
}

\date{Received: date / Accepted: date}
% The correct dates will be entered by the editor

\maketitle

\begin{abstract}
When the nonconvex problem is complicated by stochasticity, the sample complexity of stochastic first-order methods may depend linearly on the problem dimension, which is undesirable for large-scale problems. In this work, we propose dimension-insensitive stochastic first-order methods (DISFOMs) to address nonconvex optimization with expected-valued objective function. Our algorithms allow for non-Euclidean and non-smooth distance functions as the proximal terms. Under mild assumptions, we show that DISFOM using minibatches to estimate the gradient enjoys sample complexity of $ \sO ( (\log d) / \epsilon^4 ) $ to obtain an $\epsilon$-stationary point. Furthermore, we prove that DISFOM employing variance reduction can sharpen this bound to $\sO( (\log d)^{2/3}/\epsilon^{10/3} )$, which perhaps leads to the best-known sample complexity result in terms of $d$. We provide two choices of the non-smooth distance functions, both of which allow for closed-form solutions to the proximal step. Numerical experiments are conducted to illustrate the dimension insensitive property of the proposed frameworks.

\keywords{Nonconvex Large-scale Optimization \and Stochastic First-order Methods \and Non-Euclidean distances \and Sample Complexity \and Variance Reduction}
% \PACS{PACS code1 \and PACS code2 \and more}
\subclass{90C06 \and 	90C15 \and 	90C26 \and 90C30}
\end{abstract}

\section{Introduction}\label{section: intro}
In this paper, we consider a stochastic optimization (SO) problem formulated as below:
\begin{align}\label{SP problem}
\begin{aligned}
        \min_{\x \in \bR^d} & \quad f(\xbf) = \bE[F(\xbf,\zeta)] \\
        \mbox{s.t.} & \quad \xbf \in X, 
\end{aligned}
\end{align} 
where $f: \bR^d \to \bR$ and $F: \bR^d \times \Omega \to \bR$. $(\Omega, \mathcal{F}, \mathbb{P})$ is the probability space. $X$ is a closed and convex set.
% where $\bar X:=\{\xbf:\,\mathbf 1^\top\xbf\leq 1$ is a corner of non-negative orthant,  $\zeta$ is a random vector of problem parameters whose probability distribution $\mathbb P$ is supported on a measurable set $\mathcal W\subseteq\R^q$, and $f:\,\R^d\times\mathcal W\rightarrow\R$ is deterministic and measurable. 
Denote by $\xbf^*\in\R^d$ an optimal solution to \eqref{SP problem}. We do not have explicit assumption on problem dimension $d$, but we are particularly interested in a large $d$. 
%Here, the dimensionality of the problem $d$ is assumed, without loss of generality, to satisfy $d\geq 3$ throughout this paper({\yx Is this necessary?} 
We assume that the expectation   $\E\left[F(\xbf,\zeta)\right]=\int_{\Omega} F(\xbf,\zeta)\,\text{d}\mathbb P(\zeta)$ is well-defined and finite-valued for every $\xbf\in\R^d$ and that  it is possible to generate independent and identically distributed realizations, $\zeta_1,\,\zeta_2,....,$ of the random vector $\zeta$. For this problem, we are particularly interested in solution algorithms when $f(\x)$ is smooth but potentially nonconvex. %More specific discussions on these settings are provided in the subsections below, where (and hereafter) $\Vert\cdot\Vert_1$ and $\Vert\cdot\Vert$ are the 1-norm and 2-norm, respectively.

In view of stochasticity and potential nonconvexity of the problem,  we aim to compute an stationary point with small expected value of first-order optimality residual. 
% defined as in Definition \ref{local stationary point definition} below:
\begin{definition}\label{local stationary point definition}
For any $\epsilon\geq 0$ and nonnegative residual function $r(\cdot)$, a feasible random solution $\xbf^{*}_{\epsilon}\in X$ is said to be an {\it $\epsilon$-stationary solution/point} if and only if  
\begin{align}
\bE\left[ r(\xbf^{*}_{\epsilon}) \right]\leq \epsilon.
\end{align}
%for all $\xbf\in\bar X\cap \mathcal N(\xbf^{local}_{\epsilon})$.
\end{definition}

Many stochastic first-order methods (S-FOMs) apply to smooth and nonconvex optimization problems in the form of \eqref{SP problem}. 
% The S-FOMs, such as the stochastic gradient descent (SGD), are among the mainstream solution schemes for  \eqref{SP problem}. 
These methods date back to \cite{robbins1951stochastic,chung1954stochastic,sacks1958asymptotic}, and have been much studied (e.g., by \cite{polyak1990new,polyak1992acceleration,nemirovski2009robust,lan2012optimal,ghadimi2013stochastic,rosasco2019convergence,devolder2011stochastic,ghadimi2012optimal,chambolle2018stochastic,chambolle2011first,bach2013non,metel2019simple,xu2019stochastic,nitanda2017stochastic,davis2019stochastic,li2019ssrgd,nguyen2017sarah,wang2019spiderboost,fang2018spider,pham2020proxsarah,horvath2020adaptivity,allen2018make}, among others).
In terms of the stochastic first-order oracle or sample, most existing S-FOMs incur analytical complexities that grow polynomially in dimensionality. 

Indeed, for S-FOMs in  smooth and nonconvex optimization,  (See \cite{ghadimi2013stochastic,ghadimi2016mini,pham2020proxsarah} and references therein.) Here $\sigma^2$ is defined as an upper bound on $ \E[\Vert \nabla F(\xbf,\zeta)-\nabla  f(\xbf)\Vert^2]$. In the worst case, $\sigma^2$ is linear in $d$ because
 % \[
 % \E[\Vert \nabla F(\xbf,\zeta)-\nabla  f(\xbf)\Vert^2] 
 % =\text{Trace}\left(\Sigma^G_\xbf\right)
 % +\|\E[\nabla F(\xbf,\zeta)-\nabla  f(\xbf)]\|^2,
 % \]
 % where $\Sigma_{\xbf}^G$ is the covariance matrix of $\nabla F(\xbf,\zeta)-\nabla  f(\xbf)$ and $\E\{\nabla F(\xbf,\zeta)-\nabla  f(\xbf)\}=\0$. Thus, the smallest possible $\sigma^2$ must satisfy
 % \[
 %  \sigma^2 = \max_{\xbf\in\bar X}\left\{\text{Trace}\left(\Sigma_{\xbf}^G\right)\right\}= \max_{\xbf\in\bar X}\left\{\sum_{i=1}^d \E\left[\left(G_i(\xbf,\xi)-\nabla_{x_i}  F(\xbf)\right)^2\right]\right\},
 % \] 
 %  which is at least $\mathcal O(d\cdot \sigma_\infty^2)$ under Assumption \ref{sub-gaussian assumption}. 
\begin{align*}
    \sigma^2 & \ge \max_{x \in X} \bE[\Vert \nabla F(x,\zeta)-\nabla  f(x) \Vert^2]  = \max_{x \in X} \sum_{i=1}^d \bE[ ( \nabla_i F(x,\zeta) - \nabla_i f(x) )^2 ],
\end{align*}
where the right-hand side is often $\sO(d)$. This then leads to an analytical complexity in terms of {stochastic first-order oracle (SFO)} as following:
\begin{equation}\label{state-of-the-art-bound}
 \mathcal O\left( \frac{d}{\epsilon^4}\right). 
\end{equation} 
More recent results such as by \cite{fang2018spider,wang2019spiderboost,zhou2020stochastic} further enhances \eqref{state-of-the-art-bound}
 into: 
 \begin{align}\label{state-of-the-art-bound epsilon}
     {\tilde{\mathcal O}\left(\frac{d}{\epsilon^2}+\frac{\sqrt{d}}{\epsilon^3}\right),}
 \end{align}
{where $\tilde{\mathcal{O}}$ may hide some logarithm terms.}

{In recent years, efforts have been made to investigate randomized first-order approaches to address nonsmooth and nonconvex optimization \cite{zhang2020complexity,davis2022gradient,kornowski2022oracle}. Since this problem class is too large and may include extremely ill-conditioned problems, finding an $\epsilon$ first-order stationary point in the usual sense (when ${\rm dist}(0, \partial f(x)) \le \epsilon$, where $\partial f$ denotes the Clarke subdifferential) proves to be hard \cite{zhang2020complexity,kornowski2022oracle}. Therefore, these works focus on finding a $(\delta,\epsilon)$-Goldstein stationary point. In \cite{zhang2020complexity}, the scenario of expected-valued objective function is discussed. The authors report a $\mathcal{O}\left(d^{\frac{3}{2}}\epsilon^{-4} \delta^{-1}\right)$ sample complexity to locate a $(\delta,\epsilon)$-Goldstein stationary point. If smoothness is assumed, then this becomes $\mathcal{O}\left(d^{\frac{3}{2}}\epsilon^{-5}\right)$.}
 
However, with the ever-increasing demand for more sophisticated and comprehensive models, (SO) problems of excessively high dimensionality are rapidly emerging.  %In the DL example above,  it is no longer uncommon to have problems with hundreds of millions of decision variables. 
Based on the existing theories, the SGD and many other S-FOM alternatives  may have to perform at least hundreds of millions of iterations or stochastic first-order oracles to achieve the desired solution quality. Without fundamental innovations vis-\`a-vis high dimensionality,  the SO problem \eqref{SP problem} will become extremely difficult to solve in the near future. 

Dimension-insensitive S-FOMs have been discussed by \cite{nemirovski2009robust,lan2020first,zhang2018convergence}, especially about mirror descent algorithm (MDA) and its analysis. The authors of \cite{agarwal2012stochastic} base their algorithm on successively solving a series of $\ell_1$-regularized optimization problems using Nesterov’s dual averaging algorithm and obtain the dimension insensitive result. However, \cite{agarwal2012stochastic,nemirovski2009robust,lan2020first} focused on convex SO problems.  \cite{zhang2018convergence} extended the MDA to nonconvex settings and showed a complexity of \begin{equation}\label{result historical 2}
\mathcal O\left(\frac{\max_{\xbf\in X}\E\left[\Vert \nabla F(\xbf,\zeta)\Vert^2_\infty\right]}{\epsilon^4}\right).
\end{equation}
This bound is verifiable to be much better than \eqref{state-of-the-art-bound} and \eqref{state-of-the-art-bound epsilon} in terms of the dependence on $d$ ($\mathcal{O}(\log d)$ instead of $\mathcal{O}(d)$ because of the infinity norm) under certain mild assumptions on $\zeta$ (e.g. sub-Gaussian). {To our knowledge, this is the only dimension-insensitive algorithm available for our problem of interest from the literature\footnotemark.} \footnotetext{{In \cite{carmon2018accelerated} and \cite{fang2018spider}, dimension-insensitive complexity may also be derived, but for a strict subclass of \eqref{SP problem}. In \cite{carmon2018accelerated}, unconstrained deterministic optimization is considered where the objective function has closed-form and second-order Lipschitz smooth. In \cite{fang2018spider}, dimension-insensitive result only holds when the objective function is a finite-sum, a special case of \eqref{SP problem}. }} However, the result is lacking in two key respects: (i). The authors assume that the distance generating function should be continuously differentiable. This is somewhat restrictive and does not allow non-smooth alternatives to enhance the algorithm. (ii). The term $\epsilon^{-4}$ in \eqref{result historical 2} can be further improved.
% Our contribution
In this work, we propose dimension-insensitive stochastic first-order methods (DISFOMs) that allow non-Euclidean and non-smooth distances to construct the proximal term. Throughout this work, the residual function in Def.~\ref{local stationary point definition} is naturally defined via the following:
\begin{align}\label{def: resfunc}
\begin{aligned}
r(\bar \x) & \triangleq {\rm dist}_{\| \cdot \|_\infty}(0, \partial(f + \delta_X)(\bar \x) ) \triangleq \inf\{ \| \vbf \|_\infty \mid \vbf \in \partial(f + \delta_X)(\bar \x) \}.
%- \inf_{x \in X} \left\langle\nabla f(\bar \xbf),\, \xbf - \bar \xbf \right\rangle.
\end{aligned}
\end{align}
We point out that although the residual function is given in $\| \cdot \|_\infty$, the dimension-insensitive result is not trivial. In fact, the residual function is stronger than a residual function discussed in literature (\cite{zhang2018convergence}) and defined in $\| \cdot \|_1$:
\begin{align*}
    r_1(\x) \triangleq \frac{1}{\lambda} \| \x - {\prox}_{\lambda f}(\x) \|_1,
\end{align*}
where $ {\prox}_{\lambda f}(\x) $ is the proximal operator w.r.t. Bregman divergence. Detailed discussions are in Section~\ref{sec: res.f}. 
%Moreover, we conduct numerical experiments to illustrate that dimension-insensitive result in terms of $r(\x)$ in \eqref{def: resfunc} is not a free lunch. When $X = \bR^d$ and stochastic gradient descent is applied, $\| \nabla f(\x) \|_\infty$ can increase as a polynomial of $d$, when Lipschitz constant of $\nabla f$, is controlled at the same level and algorithm parameters (samples, iterations, stepsizes) are fixed. Detailed discussions are in Section~\ref{section:num}.

Under mild assumptions, we show that DISFOM using minibatches to estimate the gradient enjoys sample complexity of $ \sO \left( (\log d) / \epsilon^4 \right) $ to obtain an $\epsilon$-stationary point. Furthermore, we prove that DISFOM employing variance reduction can sharpen this bound to $\sO\left( (\log d)^{2/3}/\epsilon^{10/3} \right)$, which further improves over \eqref{result historical 2} in terms of dependence on both $d$ and $\epsilon$, and perhaps leads to the best-known sample complexity result in terms of $d$ in the nonconvex regime. We discuss two choices of the non-smooth distance functions, both of which allow closed-form solutions to the proximal step. Numerical experiments are conducted to illustrate the dimension-insensitive property of the proposed frameworks and show comparison with other popular algorithms.
% \begin{align}
% r(\bar \x) \triangleq - \inf_{x \in X} \left\langle\nabla f(\bar \xbf),\, \xbf - \bar \xbf \right\rangle.
% \end{align}
% In contrast, the proposed P-SVRG provably achieves the following complexity:
% \begin{align}
% \sO\left(\frac{L^{2/3}\cdot \sigma_\infty^{4/3}\cdot (\ln d)^{2/3}}{\epsilon^{4}}\right),\label{complexity result first-order v2}
% \end{align}
% which further improves over \eqref{result historical 2} in terms of the dependence on $d$ and perhaps leads to the best-known complexity result in terms of 
% $d$.

In Section~\ref{sec: Prelim} we clarify some mild assumptions used in this article. We discuss the residual function defined in \eqref{def: resfunc} and how it compares with others in Section~\ref{sec: res.f}. In Section~\ref{sec: DISFOM}, we formally introduce the DISFOMs. Theoretical analysis are provided in Section~\ref{sec: analysis}. Section~\ref{section:num} includes numerical experiments.
A final conclusion can be found in Section~\ref{sec: conclusion}.

\section{Preliminary}\label{sec: Prelim}
%{\bf (i) Setting I (smooth and nonconvex optimization):} 
We consider problem~\ref{SP problem}  where the following assumptions are satisfied:

\begin{assumption}\label{assumption first-order oracle}
There is a stochastic first-order oracle (SFO) that returns the gradient $\nabla F(\xbf,\zeta)$ for any given input point $(\xbf,\zeta)\in\R^d\times \Omega$.
\end{assumption}

% \begin{assumption}\label{Lipschitz gradient assumption}
% At any $\xbf\in X$, it holds that $F(\xbf,\zeta)$ is continuously differentiable and admits a Lipschitz gradient such that 
% $\Vert \nabla F(\xbf_1,\zeta)-\nabla F(\xbf_2,\zeta)\Vert\leq L \cdot \Vert \xbf_1-\xbf_2\Vert$, for all $\xbf_1,\,\xbf_2\in\bar X$ for almost every $\zeta\in\mathcal W$. WLOG, suppose that $L \ge 1$.
% \end{assumption}

% \begin{assumption}\label{sub-gaussian assumption}
% For any $\xbf\in \bar X$, $j = 1,\hdots,d$ and some $\sigma_\infty>0$, it holds that
% \begin{itemize}
%     \item[(i).] $ \mathbb E[ \nabla_j F(x,\zeta) ] = \nabla_j f(x)
%     $,
%     \item[(ii).] $
% \mathbb E\left[\exp\left\{ t\left(\nabla_j F(\xbf,\zeta)-\nabla_j f(\xbf)\right) \right\}\right]\leq \exp(t^2\cdot \sigma_\infty^2/2)
% $, for any $t \in \R$,
% \end{itemize}
% i.e., $\nabla_j F(x, \zeta)$ is $\sigma_\infty^2$-sub-Gaussian.
% \end{assumption}
% {\yx Previously it is $
% \mathbb E\left[\exp\left\{ t\left(\nabla_j F(\xbf,\zeta)-\nabla_j f(\xbf)\right)^2 \right\}\right]\leq \exp(t^2\cdot \sigma_\infty^2).
% $ Not sure what we can get out of this.}

\begin{assumption}\label{ass: subG}
    Let $w(\xbf) \triangleq \nabla F(\xbf,\zeta) - \nabla f(\xbf)$. $w_i(\xbf)$ denotes its $i$th component. Then
    \be
    \item $\bE[w(\xbf)] = 0, \quad \forall \xbf \in X $,
    \item $\bE[\exp(t w_i(\xbf))] \le \exp(\sigma_\infty^2 t^2 /2)$, $ \forall t \in \bR$, $\forall i = 1,\hdots,d$, $\forall \xbf \in X$,
    \ee
i.e., $w_i(\xbf)$ is a random variable that has $\sigma_\infty^2$-sub-Gaussian distribution.
\end{assumption}
By Assumption~\ref{ass: subG}, we immediately have 
\begin{align}
    \bE[\| w(\xbf) \|^2] \le d \sigma_\infty^2.
\end{align}
\begin{assumption}\label{ass: Lip}
    \begin{align}
    \begin{aligned}
        \| \nabla F(\xbf,\zeta) - \nabla F(\ybf,\zeta) \| \le L \| \xbf - \ybf \|, \quad \forall \zeta \in \Omega, \quad \forall \xbf, \ybf \in X.
        \end{aligned}
    \end{align}
\end{assumption}
By Assumption~\ref{ass: Lip}, we have that $\nabla f(\xbf)$ is also Lipschitz continuous with constant $L$. Therefore, the following inequality holds:
\begin{align}\label{ineq: Taylor}
    f(\ybf) \le f(\xbf) + \nabla f(\xbf)^T (\ybf - \xbf) + \frac{L}{2} \| \ybf - \xbf \|^2.
\end{align}
\begin{assumption}\label{ass: setw3hj}
\begin{align}
    f(\xbf) \ge \lf, \quad \forall \xbf \in X.
\end{align}
\end{assumption}

\iffalse
Suppose that
\begin{align}
    D_1(X) \triangleq \sup \{ \| \xbf - \ybf \|_1 \mid \xbf, \ybf \in X \},
\end{align}
then we have the following assumption on the set $X$.
\begin{assumption}\label{ass: compset}
    $X$ is convex and $D_1(X) \le R < +\infty$. 
\end{assumption}
\fi

Assumption \ref{assumption first-order oracle} stipulates that the oracle accessible here is the tractable computation of a stochastic gradient. Assumption \ref{ass: subG} assumes that each component of the stochastic gradient is unbiased and its error is a sub-Gaussian random variable with parameter $\sigma_\infty$. Assumption \ref{ass: Lip} states that function $\nabla F(\cdot,\zeta)$ is Lipschitz smooth. All these  assumptions above are standard or equivalent to the common conditions in the literature of stochastic approximation or S-FOMs. For example, Assumptions \ref{assumption first-order oracle} and  \ref{ass: subG}  are imposed by \cite{nemirovski2009robust,ghadimi2013stochastic,lan2020first} (though some important results therein do not rely on Assumption \ref{ass: subG}). Note that if a random variable has a bounded support, it is sub-Gaussian. Assumption \ref{ass: Lip} or its slightly weaker version (inequality holds after taking expectation on the left) is necessary for the discussions by \cite{wang2019spiderboost,nguyen2017sarah,pham2020proxsarah}. 
%The feasible region of the formulation  \eqref{SP problem}  can be more general than it may seem; straightforward reformulations can allow \eqref{SP problem}  to subsume an unconstrained problem and a non-negative constrained problem. This is because   the difference of two non-negative variables can be employed to represent a free variable, and   an arbitrary finite optimal solution  can be an element to $\bar X$ after simple re-scaling such that. In addition, the penalty method and the  cooperative stochastic approximation framework \cite{lan2020algorithms}, among others, can be incorporated to handle other non-trivial constraints.  Theories for solving \eqref{SP problem} can serve as the underpinning for more general constrained extensions.

\paragraph{Notation.} Given a sequence $\{ \vbf^k \}$, $\Delta \vbf^{k+1} \triangleq \vbf^{k+1} - \vbf^k$. Given a vector $\vbf \in \bR^d$, $\vbf_i$ means its $i$th component unless specified otherwise in the context. For a positive integer $K$, $[K]$ denotes the set $\{1,2,\hdots,K\}$. $\| \cdot \|$ denotes the $2$-norm $\| \cdot \|_2$. Denote $g^k \triangleq \nabla f(\xbf^k)$ for any $k$.

\section{Discussion of residual function}\label{sec: res.f}
It is true that the dimension-insensitive complexity result is dependent on the criterion used to measure optimality. If the criterion is weak, then the dimension-insensitive result could be meaningless. We demonstrate that the dimension-insensitive result associated with the residual function~\eqref{def: resfunc} is nontrivial by comparing $r(\x)$ in \eqref{def: resfunc} with the following two discussed in \cite{zhang2018convergence}:
\begin{align*}
    & r_1(\x) \triangleq \frac{1}{\lambda} \| \x - {\rm prox}_{\lambda f}(\x) \|_1, \\
    & r_2(\x) \triangleq \frac{1}{\lambda} \sqrt{ D_h(\x,{\prox}(\x)) + D_h({\prox}(\x),\x)},
\end{align*}
where 
\begin{align}\label{def: prox}
{\rm prox}_{\lambda f}(\x) \triangleq \mbox{arg}\min\limits_{\z \in X}{ f(\z) + \frac{1}{\lambda} D_h(\z,\x) }
\end{align}
and $D_h(\cdot,\cdot)$ is the Bregman divergence induced by 1-stongly convex continuously differentiable distantce generating function $h(\cdot)$ w.r.t $\| \cdot \|_1$, i.e.,
\[
D_h(\ubf,\vbf) \triangleq h(\ubf) - h(\vbf) - \langle \nabla h(\vbf), \ubf - \vbf \rangle \ge \frac{1}{2}\| \ubf - \vbf \|_1^2.
\]
The residual function $r_1(\x)$ is a natural generalization from the one used in \cite{davis2018stochastic} to $\ell_1$-norm. In fact, $r_2(\x)$ is stronger than $r_1(\x)$ since 
\begin{align*}
    & D_h(\x,{\prox}(\x)) + D_h({\prox}(\x),\x) = \langle \nabla h(\x) - \nabla h({\prox}(\x)), \x - {\prox}(\x) \rangle \\
    & \ge \| \x - {\prox}(\x) \|_1^2.
\end{align*}
We point out that $r(\x)$ is stronger than $r_2(\x)$. In fact, if $f$ is $\rho$-relatively weakly convex ($f(\x) + \rho h(\x)$ is convex for such $h(\cdot)$ and some $\rho > 0$) and $\lambda \in (0,\rho^{-1})$ so that \eqref{def: prox} is well-defined,  we have the following statement.
\begin{lemma}\label{lm: comp.mea}
Let $\x \in X$, then
    \begin{align}\label{ineq: rf0}
    \begin{aligned}
 r(\x) \ge (1-\lambda \rho)r_2(\x).
 \end{aligned}
\end{align}
\end{lemma}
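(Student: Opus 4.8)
The plan is to produce an explicit element of $\partial(f+\delta_X)(\x)$ whose $\|\cdot\|_\infty$ norm is controlled by $r_2(\x)$, which then bounds the infimum defining $r(\x)$ from below after rearranging. Let $\p \triangleq {\prox}_{\lambda f}(\x)$. By first-order optimality for the (strongly convex, since $\lambda\rho<1$) subproblem \eqref{def: prox}, there exists $\s \in \partial(f+\delta_X)(\p)$ such that $\s + \frac{1}{\lambda}\big(\nabla h(\p) - \nabla h(\x)\big) = 0$, i.e. $\s = \frac{1}{\lambda}\big(\nabla h(\x) - \nabla h(\p)\big)$. The subtlety is that this subgradient lives at $\p$, not at $\x$; so the next step is to transport it to $\x$ using $\rho$-relative weak convexity. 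Since $f + \rho h$ is convex and $\delta_X$ is convex, $f + \delta_X + \rho h$ is convex, hence for any $\g \in \partial(f+\delta_X)(\x)$ we get a monotonicity-type inequality relating $\g$, $\s$, $\nabla h(\x)$, $\nabla h(\p)$, and $\x - \p$. Concretely, I would write the subgradient inequality for $f+\delta_X+\rho h$ at $\p$ tested against $\x$, and at $\x$ tested against $\p$, add them, and isolate $\langle \g, \x - \p\rangle$.

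Carrying this out: convexity of $f+\delta_X+\rho h$ gives, for $\g\in\partial(f+\delta_X)(\x)$,
\begin{align*}
\langle \g + \rho\nabla h(\x) - \s - \rho\nabla h(\p),\ \x - \p \rangle \ge 0,
\end{align*}
so $\langle \g, \x-\p\rangle \ge \langle \s - \rho\nabla h(\x) + \rho\nabla h(\p),\ \x-\p\rangle = \frac{1}{\lambda}\langle \nabla h(\x) - \nabla h(\p), \x - \p\rangle - \rho\langle \nabla h(\x) - \nabla h(\p), \x-\p\rangle = \left(\frac{1}{\lambda} - \rho\right)\langle \nabla h(\x)-\nabla h(\p), \x-\p\rangle$. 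Now by Hölder's inequality the left side satisfies $\langle \g, \x-\p\rangle \le \|\g\|_\infty \|\x-\p\|_1$, and by the identity already recorded in the excerpt, $\langle \nabla h(\x)-\nabla h(\p), \x-\p\rangle = D_h(\x,\p) + D_h(\p,\x)$. Combining,
\begin{align*}
\|\g\|_\infty\,\|\x-\p\|_1 \ge \left(\tfrac{1}{\lambda}-\rho\right)\big(D_h(\x,\p)+D_h(\p,\x)\big).
\end{align*}
Using $\|\x-\p\|_1^2 \le D_h(\x,\p)+D_h(\p,\x)$ (the $1$-strong-convexity bound, also already in the excerpt), one gets $\|\x-\p\|_1 \le \sqrt{D_h(\x,\p)+D_h(\p,\x)}$, so dividing through yields $\|\g\|_\infty \ge \left(\frac{1}{\lambda}-\rho\right)\sqrt{D_h(\x,\p)+D_h(\p,\x)} = \frac{1}{\lambda}(1-\lambda\rho)\cdot\lambda\, r_2(\x)\cdot\frac{1}{\lambda}$... more cleanly: $\left(\frac1\lambda-\rho\right)\sqrt{D_h(\x,\p)+D_h(\p,\x)} = (1-\lambda\rho)\cdot\frac1\lambda\sqrt{D_h(\x,\p)+D_h(\p,\x)} = (1-\lambda\rho)\,r_2(\x)$. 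Taking the infimum over $\g\in\partial(f+\delta_X)(\x)$ gives $r(\x)\ge(1-\lambda\rho)r_2(\x)$.

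The main obstacle I anticipate is handling the case $\x = \p$ (where both sides are zero and the division is illegitimate) — that should be dispatched separately by noting $r_2(\x)=0$ then, so the inequality is trivial. A secondary point requiring care is the existence and exact form of the optimality condition for \eqref{def: prox}: I need $\p\in X$, strong convexity of the objective (guaranteed by $\lambda\rho<1$ and $1$-strong convexity of $h$ w.r.t.\ $\|\cdot\|_1$) to ensure the minimizer exists and is unique, and a subdifferential sum rule to write $0\in\partial f(\p) + \partial\delta_X(\p) + \frac1\lambda(\nabla h(\p)-\nabla h(\x))$, which is where $\s\in\partial(f+\delta_X)(\p)$ comes from. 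Everything else is Hölder plus the two Bregman identities already supplied in the text.
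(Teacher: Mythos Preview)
Your argument is correct and essentially the same as the paper's: both establish $\langle \vbf,\x-\hx\rangle \ge (\lambda^{-1}-\rho)\,\langle \nabla h(\x)-\nabla h(\hx),\x-\hx\rangle$ for every $\vbf\in\partial(f+\delta_X)(\x)$ from the optimality condition at $\hx={\rm prox}_{\lambda f}(\x)$ combined with $\rho$-relative weak convexity, then finish via H\"older and the $1$-strong convexity of $h$ (handling $\x=\hx$ separately). The only cosmetic difference is that you package the weak-convexity step as monotonicity of $\partial(f+\delta_X+\rho h)$ in one line, whereas the paper splits it into the normal-cone inequality $\langle \vbf-\nabla f(\x),\hx-\x\rangle\le 0$ and monotonicity of $\nabla f+\rho\nabla h$, using smoothness of $f$.
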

\begin{proof}
    Denote $\hx \triangleq \prox_{\lambda f}(\x)$. Then by definition \eqref{def: prox}, 
\begin{align}
\notag
    & 0 \in \nabla f(\hx) + \frac{1}{\lambda}( \nabla h(\hx) - \nabla h(\x) ) + \sN_X(\hx), \\
    \notag
    & \Longleftrightarrow \left\langle \left( \nabla f(\hx) + \frac{1}{\lambda} ( \nabla h(\hx) - \nabla h(\x) ) \right) , \y - \hx \right\rangle \ge 0, \quad \forall \y \in X, \\
    \notag
    & \implies \langle \nabla f(\x), \x - \hx \rangle \ge \left\langle \nabla f (\hx) + \frac{1}{\lambda} \nabla h(\hx) - \nabla f(\x) - \frac{1}{\lambda} \nabla h(\x), \hx - \x \right\rangle \\
    \label{ineq: rf1}
    & \qquad \ge \left( \frac{1}{\lambda} - \rho \right) \langle \nabla h(\hx) - \nabla h(\x), \hx - \x \rangle.
\end{align}
Choose arbitary $\vbf \in \partial(f + \delta_X)(\x)$, then we have
\begin{align}\label{ineq: rf2}
    \left\langle \vbf - \nabla f(\x), \y - \x \right\rangle \le 0, \quad \forall \y \in X.
\end{align}
By \eqref{ineq: rf1} and \eqref{ineq: rf2}, we have
\begin{align}\label{ineq: rf3}
    \left( 1/\lambda - \rho \right) \langle \nabla h(\hx) - \nabla h(\x), \hx - \x \rangle \le \langle \vbf , \x - \hx \rangle.
\end{align}
WLOG, suppose that $\hx \neq \x$. Then by \eqref{ineq: rf3},
\begin{align}\label{ineq: rf4}
    \left( 1/\lambda - \rho \right) \frac{\langle \nabla h(\hx) - \nabla h(\x), \hx - \x \rangle} {\| \hx - \x \|_1} \le \left\langle \vbf , \frac{\x - \hx}{\| \x - \hx \|_1} \right\rangle \le \| \vbf \|_\infty.
\end{align}
However, 
\begin{align}
\notag
    & \langle \nabla h(\hx) - \nabla h(\x), \hx - \x \rangle \ge \| \hx - \x \|_1^2 \\
    \label{ineq: rf5}
   \implies &  \frac{\langle \nabla h(\hx) - \nabla h(\x), \hx - \x \rangle} {\| \hx - \x \|_1} \ge \sqrt{\langle \nabla h(\hx) - \nabla h(\x), \hx - \x \rangle}.
\end{align}
By combining \eqref{ineq: rf4}\eqref{ineq: rf5} and the fact that $\vbf$ is arbitrary, we have that \eqref{ineq: rf0} holds. \qed
\end{proof}

\section{Dimension-insensitive stochastic first-order methods}\label{sec: DISFOM}

We start from a general framework of a stochastic first-order method as follows.

\begin{algorithm}[ht] 
\caption{Stochastic first-order method}\label{alg: DI-SGD}
\begin{description}
\item[{\bf Step 1.}] Initialize $\xbf^1$ and hyper-parameters $K$, $\eta > 0$.
\item[{\bf Step 2.}] Invoke the following operations for $k=1,...,K$:
\begin{description}
 \item[{\bf Step 2.1}] Generate the gradient estimate $G^k$. 
 \item[{\bf Step 2.2}] Solve the proximal projection problem:
 \begin{align}\label{proj}
     \xbf^{k+1} = P_X^k (\xbf^k - \eta G^k)
 \end{align}
\end{description}
\item[{\bf Step 3.}] Output $\xbf^{Y+1}$ for a random $Y$, which has a discrete distribution on $[K]$ with a probability mass function $\mathbb P[Y=k]=1/K$.
\end{description}
\end{algorithm}

Now we discuss the details of the proximal projection operator $P_X^k(\cdot)$ and gradient estimate $G^k$ that can achieve the dimension-insensitive property. In particular, the proximal projection operator $P_X^k(\cdot)$ is defined as
\begin{align}\label{def: proj}
    P_X^k(\vbf) = \mbox{arg}\min_{\xbf \in X} \left\{ \frac{1}{2} \| \xbf - \vbf \|^2 + \phi(\xbf - \x
^k) \right\},
\end{align}
where $\phi: \bR^d \to \bR$ is a convex function and we suppose that calculation of \eqref{def: proj} is easy and exact when $X = \bR^d$ (we will discuss the specific form of $\phi$ later) or when $\phi \equiv 0$. Moreover, suppose that $\phi(0) = 0$. We could employ ADMM (see Appendix~\ref{app: alg}) to resolve \eqref{proj}\eqref{def: proj} to accuracy level $\eh > 0$ so that the following conditions hold: $\exists \xi^{k+1}, \Gamma^{k+1}, \ybf^{k+1}$, s.t.,
\begin{subequations}\label{ineq: proj}
\begin{align}\label{ineq: proj1} 
        (\Delta \xbf^{k+1} + \eta G^k + \xi^{k+1} + \Gamma^{k+1})^T( \xbf - \xbf^{k+1} ) \ge 0, &\quad 
        \forall \xbf \in X, \\
        \label{eq: proj2}
        \xi^{k+1} \in \partial \phi(\ybf^{k+1} - \x^k), & \\
        \label{ineq: proj3}
        {\| \Gamma^{k+1} \| \le \eh}, \quad \| \xbf^{k+1} -\ybf^{k+1} \|_1 \le \eh. &
    \end{align}
\end{subequations}
% Suppose that $\phi_k(\x^k) = 0$, then by \eqref{eq: proj2} and convexity of $\phi_k$,
% \begin{align}\label{ineq: phi}
%     ( \xi^{k+1} )^T  (\xbf^k - \ybf^{k+1}) + \phi_k(\ybf^{k+1}) \le \phi_k(\x^k) = 0.
% \end{align}
In particular, when $X = \bR^d$, by assumption we have that \eqref{def: proj} can be solved easily and exactly, and \eqref{ineq: proj1}-\eqref{ineq: proj3} hold true with $\eh = 0$. In fact, when $X = \bR^d$ we have
\begin{align} \label{eq: proj4}
    - \Delta \x^{k+1} - \eta G^k = \xi^{k+1} \in \partial \phi(\x^{k+1} - \x^k).
\end{align}
If $X \neq \bR^d$, we point out that ADMM is also fast in resolving \eqref{def: proj} under the assumption that $X$ is a polyhedron and $\partial \phi(\x)$ is a piecewise linear multifunctions ($Gr(\partial \phi(\x)) \triangleq \{ (\x,\y) \mid \y \in \partial \phi(\x) \}$ is the union of finitely many polyhedra). According to \cite{yang2016linear}, ADMM converges in {\bf linear rate} in 2-norm in this scenario. Considering that each step of ADMM is cheap, additional computation introduced by $P_X^k(\cdot)$ could be incremental. {Details about how to apply ADMM with linear convergence rate to solve \eqref{proj}\eqref{def: proj} and guarantee \eqref{ineq: proj1}-\eqref{ineq: proj3} is included in Appendix~\ref{app: alg}.}
\subsection{Choices of $\phi$}
Next we provide two possible choices of $\phi$. \\
\noindent{Case 1}: Let 
\begin{align}\label{setting1}
\phi(\x) = \frac{\hrho}{2} \| \x \|_1^2, \quad \hrho > 0.
\end{align}
For such a $\phi(\x)$, the following lemma holds by application of Danskin's Theorem.
\begin{lemma}\label{lm: l1subdiff}
Let $\phi(\x)$ be in \eqref{setting1}. 
Then
\begin{align}\label{l1subdiff}
    \partial \phi(\x) & = \hrho \| \x \|_1 \partial \| \x \|_1 = \left\{ \hrho \| \x \|_1 \vbf: \vbf \in \bR^d, \begin{array}{ll}
         \vbf_i = 1 & \mbox{if } \x_i > 0 \\
         \vbf_i = -1 & \mbox{if } \x_i < 0\\
         \vbf_i \in [-1,1] & \mbox{if } \x_i = 0
    \end{array} \right\}.
\end{align}
\end{lemma}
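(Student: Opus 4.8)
The plan is to compute $\partial\phi$ for $\phi(\x)=\tfrac{\hrho}{2}\|\x\|_1^2$ by viewing $\phi$ as a composition $\phi = \psi\circ N$, where $N(\x)=\|\x\|_1$ and $\psi(t)=\tfrac{\hrho}{2}t^2$. Since $\psi$ is convex, differentiable, and nondecreasing on $[0,\infty)$ (and $N\ge 0$), the standard chain rule for subdifferentials of convex functions gives $\partial\phi(\x)=\psi'(N(\x))\,\partial N(\x)=\hrho\|\x\|_1\,\partial\|\x\|_1$. The description of $\partial\|\x\|_1$ as the set of sign vectors (with the $[-1,1]$ slack in coordinates where $\x_i=0$) is classical, so plugging it in yields exactly the claimed formula \eqref{l1subdiff}.

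Alternatively — and this matches the hint about Danskin's Theorem — I would write $\|\x\|_1 = \max_{\vbf\in[-1,1]^d}\langle\vbf,\x\rangle$, hence
\begin{align*}
\phi(\x) = \frac{\hrho}{2}\Bigl(\max_{\vbf\in[-1,1]^d}\langle\vbf,\x\rangle\Bigr)^2 = \max_{\vbf\in[-1,1]^d}\ \frac{\hrho}{2}\langle\vbf,\x\rangle\,\|\x\|_1,
\end{align*}
but a cleaner route is to first establish $\partial\|\x\|_1$ via Danskin applied to the linear-in-$\x$ family $g_{\vbf}(\x)=\langle\vbf,\x\rangle$ over the compact set $[-1,1]^d$: Danskin's Theorem gives $\partial\|\x\|_1 = \conv\{\,\vbf\in[-1,1]^d : \langle\vbf,\x\rangle = \|\x\|_1\,\}$, and the maximizing set is precisely $\{\vbf : \vbf_i=\sgn(\x_i)\text{ if }\x_i\neq 0,\ \vbf_i\in[-1,1]\text{ if }\x_i=0\}$, which is already convex. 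Then the outer square is handled by the one-dimensional chain rule as above. Either way the key facts are (i) $\psi(t)=\tfrac{\hrho}{2}t^2$ has $\psi'(t)=\hrho t$ and is nondecreasing for $t\ge 0$, so the chain rule has no sign subtleties, and (ii) $N\ge 0$ everywhere so $\psi$ is only ever evaluated on its nondecreasing branch.

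The one point requiring a little care — and the closest thing to an obstacle — is the boundary case $\x=0$: there $\psi'(N(0))=\hrho\cdot 0=0$, so the formula predicts $\partial\phi(0)=\{0\}$, and indeed $\partial\phi(0)=\{0\}$ because $\phi(\x)=\tfrac{\hrho}{2}\|\x\|_1^2 = o(\|\x\|)$ near the origin (it is differentiable there with zero gradient, being a square of a Lipschitz function). One should check that the composition rule $\partial(\psi\circ N)(\x)=\psi'(N(\x))\partial N(\x)$ remains valid at this point — it does, since $\psi$ is differentiable, but it is worth noting explicitly that the $\partial\|\x\|_1$ factor being a large set is annihilated by the zero multiplier, consistent with $\|0\|_1=0$ appearing in the prefactor $\hrho\|\x\|_1$. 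After that remark the proof is essentially a one-line invocation of the chain rule plus the known formula for $\partial\|\cdot\|_1$, and I would present it in that order: recall $\partial\|\cdot\|_1$, recall/apply the convex chain rule for $\psi\circ N$ with $\psi$ smooth nondecreasing, and combine.
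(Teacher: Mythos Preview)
Your proof is correct, and in fact a bit cleaner than the paper's, but it takes a different route. The paper applies Danskin's Theorem directly to the \emph{squared} representation
\[
\|\x\|_1^2 \;=\; \max_{\alpha\in[-1,1]^d}\,(\alpha^T\x)^2,
\]
so that $\partial\|\x\|_1^2$ is the convex hull of the gradients $2(\alpha^T\x)\alpha$ over all maximizing $\alpha$; this forces them to track two branches of maximizers (those with $\alpha^T\x=\|\x\|_1$ and those with $\alpha^T\x=-\|\x\|_1$) and then observe that both branches produce the same set $2\|\x\|_1\,\partial\|\x\|_1$. Your approach instead factors $\phi=\psi\circ N$ with $\psi(t)=\tfrac{\hrho}{2}t^2$ and $N(\x)=\|\x\|_1$, and invokes the convex chain rule $\partial(\psi\circ N)(\x)=\psi'(N(\x))\,\partial N(\x)$, using only the well-known form of $\partial\|\cdot\|_1$. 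This avoids the $\pm$-branch bookkeeping entirely and makes the special case $\x=0$ (where the large set $\partial\|0\|_1=[-1,1]^d$ is annihilated by the zero prefactor) transparent, exactly as you note. The paper's version has the small advantage of citing a single tool (Danskin) rather than two (Danskin/known subdifferential of $\|\cdot\|_1$ plus the composition rule), but your argument is shorter and arguably more natural for a smooth-outer, nonsmooth-inner composition.
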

\begin{proof}
    Let $\conv(S)$ denote the convex hull of a set $S$. Note that $\| \x \|_1^2 = \max_{\alpha \in [-1,1]^d} (\alpha^T \x)^2 $. Then by Danskin's Theorem,
    {\small
    \begin{align*}
        & \partial \| \x \|_1^2 \\
        & = \conv\left\{ 2(\alpha^T \x)\alpha : \begin{cases}
            \alpha_i = 1 & \mbox{if } \x_i > 0, \\
            \alpha_i = -1 & \mbox{if } \x_i < 0, \\
            \alpha_i \in [-1,1] & \mbox{if } \x_i = 0,
        \end{cases} \mbox{ or }
        \begin{cases}
            \alpha_i = -1 & \mbox{if } \x_i > 0, \\
            \alpha_i = 1 & \mbox{if } \x_i < 0, \\
            \alpha_i \in [-1,1] & \mbox{if } \x_i = 0,
        \end{cases} 
        \qquad \right\}\\
        & = \conv\left\{ 2\| \x \|_1 \alpha : \begin{cases}
            \alpha_i = 1 & \mbox{if } \x_i > 0, \\
            \alpha_i = -1 & \mbox{if } \x_i < 0, \\
            \alpha_i \in [-1,1] & \mbox{if } \x_i = 0,
        \end{cases}  
        \qquad \right\} \\
        & = 2\| \x \|_1 \partial \| \x \|_1.
    \end{align*}}
    Therefore the result follows. \qed
\end{proof}
Now we discuss the formula to calculate the following proximal problem (one of the subproblems in ADMM iterations) for such a choice of $\phi$:
\begin{align}\label{opt: l1squareprox}
   \min_{\z} \quad \frac{1}{2} \| \z - \vbf \|^2 + \frac{\hat \rho}{2} \| \z \|_1^2.
\end{align}
The next lemma demonstrates that it is indeed tractable to find the solution of \eqref{opt: l1squareprox}.
\begin{lemma}\label{lm: comp.l12prox}
Suppose that $\z^*$ denotes the optimal solution of \eqref{opt: l1squareprox}. Then, if $\vbf = 0$, $\z^* = 0$. Otherwise, suppose that $\vbf \neq 0$. Let $i_1, i_2, \hdots, i_d$ be the permutation of $\{1,\hdots, d \}$ such that
\begin{align*}
    |\vbf_{i_1}| \le |\vbf_{i_2}| \le \hdots \le |\vbf_{i_d}|.
\end{align*}
Denote a dummy index $i_0 = 0$ and dummy scalar $\vbf_{i_0} = 0$. Let $s_k \triangleq \sum_{t=0}^{k-1} | \vbf_{i_t} |$ $ + \frac{(d-k+1)\hrho + 1}{\hrho} | \vbf_{i_k } |$, $\forall k = 0,...,d$. (By convention, $\sum_{t=0}^{-1} | \vbf_{i_t} | \triangleq 0$). Then $s_k$ is non-decreasing and $\| \vbf \|_1 < s_d$. Suppose that $\bar k$ satisfies $s_{\bar k} \le \| \vbf \|_1 < s_{\bar k + 1}$. Then we have that
\begin{align}\label{proxform2}
    \z_{i_t}^* = \begin{cases}
        0 & \mbox{if } 1 \le t \le \bar k \\
        \vbf_{i_t} - \frac{\sgn(\vbf_{i_t}) \hrho}{\hrho(d-k)+1} \sum_{t = k+1}^d | \vbf_{i_t} | & \mbox{if } t > \bar k.
    \end{cases}
\end{align}
\end{lemma}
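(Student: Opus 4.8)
The plan is to use the first-order optimality condition to show that the minimizer $\zbf^*$ must be the soft-thresholding of $\vbf$ at a level $\theta$ that is itself proportional to $\|\zbf^*\|_1$, then solve the resulting self-consistency equation for $\theta$ once the number of thresholded coordinates is known, and finally identify that number from the comparison of $\|\vbf\|_1$ with the scalars $s_k$.

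First I would note that the objective of \eqref{opt: l1squareprox} is strictly convex and coercive, hence \eqref{opt: l1squareprox} has a unique minimizer $\zbf^*$; the case $\vbf = 0$ gives $\zbf^* = 0$ directly, so assume $\vbf \neq 0$. By Lemma~\ref{lm: l1subdiff}, $\partial\big(\tfrac{\hrho}{2}\|\cdot\|_1^2\big) = \hrho\|\cdot\|_1\,\partial\|\cdot\|_1$, so the optimality condition reads $0 \in (\zbf^* - \vbf) + \hrho\|\zbf^*\|_1\,\partial\|\zbf^*\|_1$. Setting $\theta \triangleq \hrho\|\zbf^*\|_1 \ge 0$, this is precisely the optimality condition at $\zbf^*$ for the convex problem $\min_\zbf \tfrac12\|\zbf-\vbf\|^2 + \theta\|\zbf\|_1$, whose unique solution is the soft-thresholding of $\vbf$ at level $\theta$: $\zbf_i^* = \sgn(\vbf_i)\max(|\vbf_i| - \theta,\,0)$ for all $i$. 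If $\theta = 0$ then $\zbf^* = \vbf \neq 0$, contradicting $\theta = \hrho\|\zbf^*\|_1 = 0$; hence $\theta > 0$ and $\zbf^* \neq 0$.

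Next, put $\mu \triangleq \|\zbf^*\|_1$ and use the sorted order $|\vbf_{i_1}| \le \cdots \le |\vbf_{i_d}|$; let $\bar k$ be the number of indices $t$ with $|\vbf_{i_t}| \le \theta$. Then $\zbf^*$ vanishes at $i_1,\dots,i_{\bar k}$ and shrinks the remaining coordinates, so
\[
\mu \;=\; \sum_{t = \bar k+1}^{d}\big(|\vbf_{i_t}| - \theta\big) \;=\; \sum_{t=\bar k+1}^{d}|\vbf_{i_t}| \;-\; (d-\bar k)\hrho\mu .
\]
Solving this linear equation for $\mu$ gives $\mu = \big(\sum_{t=\bar k+1}^{d}|\vbf_{i_t}|\big)\big/\big(1 + (d-\bar k)\hrho\big)$ and $\theta = \hrho\mu$, which is exactly \eqref{proxform2} (the index $k$ there being our $\bar k$). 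To locate $\bar k$ I would first check that $\{s_k\}$ is non-decreasing — the one-line identity $s_{k+1} - s_k = \tfrac{(d-k)\hrho+1}{\hrho}\,(|\vbf_{i_{k+1}}| - |\vbf_{i_k}|) \ge 0$ — and that $s_0 = 0 \le \|\vbf\|_1$ while $s_d = \|\vbf\|_1 + |\vbf_{i_d}|/\hrho > \|\vbf\|_1$, so there is a unique $\bar k \in \{0,\dots,d-1\}$ with $s_{\bar k} \le \|\vbf\|_1 < s_{\bar k+1}$. Then, substituting the closed forms of $\mu$ and $\theta$, elementary algebra shows $|\vbf_{i_{\bar k}}| \le \theta \Leftrightarrow s_{\bar k} \le \|\vbf\|_1$ and $\theta < |\vbf_{i_{\bar k+1}}| \Leftrightarrow \|\vbf\|_1 < s_{\bar k+1}$, so the $\bar k$ above coincides with the one in the statement. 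Conversely, for the $\bar k$ singled out by $s_{\bar k} \le \|\vbf\|_1 < s_{\bar k+1}$, the vector defined by \eqref{proxform2} together with $\theta = \hrho\mu$ is the soft-thresholding of $\vbf$ at the level $\theta = \hrho\|\zbf^*\|_1$, hence satisfies the optimality condition of the second step; by uniqueness of the minimizer it equals $\zbf^*$.

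The hard part will be the bookkeeping in that last step: the implicit threshold condition $|\vbf_{i_{\bar k}}| \le \hrho\mu < |\vbf_{i_{\bar k+1}}|$ contains $\mu$, which itself depends on $\bar k$, and one must verify that it collapses exactly to the clean, $\mu$-free bracketing $s_{\bar k} \le \|\vbf\|_1 < s_{\bar k+1}$, while correctly treating ties $|\vbf_{i_t}| = \hrho\mu$ and the edge cases ($\bar k = 0$, and the exclusion of $\bar k = d$) so that both existence and uniqueness of $\bar k$ are preserved. Everything else is the standard prox-of-$\ell_1$ manipulation.
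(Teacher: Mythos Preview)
Your proposal is correct and follows essentially the same route as the paper: reduce the first-order optimality condition via Lemma~\ref{lm: l1subdiff} to a soft-thresholding of $\vbf$ at the self-referential level $\theta=\hrho\|\zbf^*\|_1$, solve the resulting linear self-consistency equation for $\|\zbf^*\|_1$ once the number of thresholded coordinates is fixed, and identify that number through the bracketing $s_{\bar k}\le\|\vbf\|_1<s_{\bar k+1}$ after checking monotonicity of $s_k$ and $\|\vbf\|_1<s_d$. If anything, your write-up is slightly more careful than the paper's (you explicitly invoke strict convexity/uniqueness, rule out $\theta=0$, and argue both directions of the correspondence between the threshold condition and the $s_k$-bracketing), but the argument is the same.
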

\begin{proof}
When $\vbf = 0$, it is trivial to see that $\z^* = 0$. Suppose that $\vbf \neq 0$. Note that we have 
\begin{align}
\notag
  0 & \in \z^* - \vbf + \frac{\hrho}{2} \partial \| \z \|_1^2 \mid_{\z = \z^*} \overset{\eqref{l1subdiff}}{=} \z^* - \vbf + \hrho  \| \z^* \|_1 \partial \| \z \|_1 \mid_{\z = \z^*} \\
  \notag
  \Longleftrightarrow \quad \z^* & = \mbox{arg}\min_{\z} \frac{1}{2} \| \z - \vbf \|^2 + \hrho \| \z^* \|_1 \| \z \|_1 \\
  \label{proxform1}
  \Longleftrightarrow \quad \z_i^* & = \begin{cases}
      0 & \mbox{if } | \vbf_i | \le \hrho \| \z^* \|_1 \\
      \vbf_i - \hrho \| \z^* \|_1 & \mbox{if } \vbf_i > \hrho \| \z^* \|_1 \\ 
      \vbf_i + \hrho \| \z^* \|_1 & \mbox{if } \vbf_i < -\hrho \| \z^* \|_1
  \end{cases}
\end{align}
It is easy to see that $\hrho \| \z^* \|_1 \le | \vbf_{i_d} |$, otherwise by \eqref{proxform1} $ \z^* = 0$, a contradiction. If $\hrho \| \z^* \|_1 = | \vbf_{i_d} |$, then by \eqref{proxform1}, $\| \z^* \|_1 = | \vbf_{i_d} | = 0 \implies \vbf = 0$, a contradiction to our assumption. Therefore, $\hrho \| \z^* \|_1 < | \vbf_{i_d} |$. Suppose that $ | \vbf_{i_k} | \le \hrho \| \z^* \|_1 < | \vbf_{i_{k+1}} |$ for some $k$ s.t. $0 \le k \le d-1$. Note that such $k$ is unique for a fixed $\vbf$. Then by \eqref{proxform1} we have
\begin{align*}
    \z_{i_t}^* = \begin{cases}
        0 & \mbox{if } 1 \le t \le k \\
        \vbf_{i_t} - \sgn(\vbf_{i_t}) \hrho \| \z^* \|_1 & \mbox{if } t > k
    \end{cases}
\end{align*}
This indicates that
\begin{align*}
    & \| \z^* \|_1 = \sum_{t = k+1}^d | \vbf_{i_t} | - \hrho (d - k) \| \z^* \|_1 \implies \| \z^* \|_1 = \frac{1}{\hrho(d-k) +1} \sum_{t = k+1}^d | \vbf_{i_t} |.
\end{align*}
Therefore,
\begin{align}
    \z_{i_t}^* = \begin{cases}
        0 & \mbox{if } 1 \le t \le k \\
        \vbf_{i_t} - \frac{\sgn(\vbf_{i_t}) \hrho}{\hrho(d-k)+1} \sum_{t = k+1}^d | \vbf_{i_t} | & \mbox{if } t > k
    \end{cases}
\end{align}
By the fact that $ | \vbf_{i_k} | \le \hrho \| \z^* \|_1 < | \vbf_{i_{k+1}} | $, this solution leads to $s_k \le \| \vbf \|_1 < s_{k+1}$. By the fact that $s_{t+1} - s_t = \frac{(d-t) + 1}{\hrho} (| \vbf_{i_{t+1}}  | - | \vbf_{i_t} |)  $ and $s_d = \| \vbf \|_1 + \frac{1}{\hrho}| \vbf_{i_d} | $, $s_t$ is non-decreasing and $\| \vbf \|_1 < s_d$. \qed
% \begin{align*}
%     & \sum_{t=0}^{k-1} | \vbf_{i_t} | + \frac{(d-k+1)\hrho + 1}{\hrho} | \vbf_{i_k } | \\
%     & \le \| \vbf \|_1 < \sum_{t=0}^{k} | \vbf_{i_t} | + \frac{(d-k)\hrho + 1}{\hrho} | \vbf_{i_{k+1} } |
% \end{align*}
\end{proof}
An alternative to case 1 is an indicator function defined as follows.\\
Case 2: Let 
\begin{align}\label{case2phi}
    \phi(\z) = \delta_{\bar X} (\z) = \begin{cases}
        0 & \mbox{if } \z \in \bar X \\
        +\infty & \mbox{if } \z \notin \bar X,
    \end{cases}
\end{align}
where $ \bar X \triangleq \{ \z \mid \| \z \|_1 \le \psi \}$ and $\psi > 0$.

In this case, we need to solve the following problem in the proximal projection step.
\begin{align}\label{opt: l1consprox}
\begin{aligned}
\min_{\z} \quad &\dfrac{1}{2} \lVert \z-\vbf \rVert_2^2 \\
\textit{s.t.} \quad &\lVert \z \rVert_1 \leq \psi
\end{aligned}
\end{align}
The next lemma reveals the closed-form solution to \eqref{opt: l1consprox}. Since this result utilizes similar technique as in Lemma~\ref{lm: comp.l12prox}, we relegate its proof to Appendix~\ref{app: proof}.
\begin{lemma}\label{lm: l1consprox}
Suppose that $\z^*$ denotes the optimal solution of \eqref{opt: l1consprox}. If $\| \vbf \|_1 \le \psi$, then $ \z^* = \vbf $. Otherwise, suppose that $i_1,\hdots,i_d$ is a permutation of $\{1,2,...,d \}$ such that $ | \vbf_{i_1}| \ge | \vbf_{i_2}| \ge .... \ge | \vbf_{i_d}|$. Denote $s_0 \triangleq 0$, $s_d \triangleq \| \vbf \|_1$ and $s_m \triangleq \sum_{i=1}^m | \vbf_{i_m} | - m | \vbf_{i_{m+1}} |$, $m = 1,2,\hdots,d-1$. Then $s_m$ is a non-decreasing sequence. Suppose that $s_{\bar m-1} < \psi \le s_{\bar m}$ for some $\bar m \in [d]$. Then we have
\begin{align*}
    & \z_{i_t}^* = \begin{cases}
        0 & \mbox{if }  \bar m < t \le d \\
        \vbf_{(i_t)} - \frac{\sgn\left(\vbf_{(i_t)}\right)}{\bar m}\left(\sum_{k=1}^{\bar m} \left\lvert \vbf_{(i_k)} \right\rvert - \psi \right) & \mbox{if }  1 \le t \le \bar m
    \end{cases}
\end{align*}
\end{lemma}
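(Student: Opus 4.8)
The plan is to follow the template of Lemma~\ref{lm: comp.l12prox}, but with the $\ell_1$ penalty replaced by a Lagrange multiplier attached to the ball constraint. First I would dispose of the trivial case: if $\| \vbf \|_1 \le \psi$, then $\vbf \in \bar X$ and $\vbf$ makes the objective vanish, so $\z^* = \vbf$. Henceforth assume $\| \vbf \|_1 > \psi$. Since $\tfrac12 \| \cdot - \vbf \|_2^2$ is strongly convex and $\bar X$ is closed and convex, $\z^*$ is unique and is characterized by $\vbf - \z^* \in \sN_{\bar X}(\z^*)$. If $\| \z^* \|_1 < \psi$ then $\sN_{\bar X}(\z^*) = \{0\}$, forcing $\z^* = \vbf$ and hence $\| \vbf \|_1 < \psi$, a contradiction; so the constraint is active, $\| \z^* \|_1 = \psi$, and $\sN_{\bar X}(\z^*) = \{\, \mu \gbf : \mu \ge 0,\ \gbf \in \partial \| \cdot \|_1(\z^*) \,\}$. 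Consequently there is $\mu \ge 0$ with $0 \in \z^* - \vbf + \mu\, \partial \| \z^* \|_1$, which is exactly the optimality condition for $\min_\z \tfrac12 \| \z - \vbf \|_2^2 + \mu \| \z \|_1$; solving coordinatewise gives the soft-thresholding formula $\z_i^* = \sgn(\vbf_i)\max(|\vbf_i| - \mu, 0)$. Moreover $\mu > 0$, since $\mu = 0$ would give $\z^* = \vbf$ and $\| \vbf \|_1 = \psi$.

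Next I would pin down $\mu$ using the active constraint. Having sorted $|\vbf_{i_1}| \ge \cdots \ge |\vbf_{i_d}|$, soft-thresholding at level $\mu$ keeps exactly those coordinates with $|\vbf_{i_k}| > \mu$, so there is an index $\bar m$ (with the convention $|\vbf_{i_{d+1}}| = 0$) such that $|\vbf_{i_{\bar m+1}}| \le \mu < |\vbf_{i_{\bar m}}|$ and the surviving set is precisely $\{i_1,\dots,i_{\bar m}\}$. Imposing $\psi = \| \z^* \|_1 = \sum_{k=1}^{\bar m}\bigl(|\vbf_{i_k}| - \mu\bigr)$ yields $\mu = \tfrac1{\bar m}\bigl(\sum_{k=1}^{\bar m}|\vbf_{i_k}| - \psi\bigr)$, and substituting this back into the soft-thresholding formula gives exactly the claimed expression for $\z_{i_t}^*$.

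It then remains to verify that this $\bar m$ is the index singled out by $s_{\bar m-1} < \psi \le s_{\bar m}$. Substituting the formula for $\mu$ into $|\vbf_{i_{\bar m+1}}| \le \mu$ and clearing $\bar m$ gives $\psi \le \sum_{k=1}^{\bar m}|\vbf_{i_k}| - \bar m |\vbf_{i_{\bar m+1}}| = s_{\bar m}$, while $\mu < |\vbf_{i_{\bar m}}|$ gives $\psi > \sum_{k=1}^{\bar m}|\vbf_{i_k}| - \bar m |\vbf_{i_{\bar m}}| = \sum_{k=1}^{\bar m-1}|\vbf_{i_k}| - (\bar m-1)|\vbf_{i_{\bar m}}| = s_{\bar m - 1}$. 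The same identity $s_m - s_{m-1} = m\bigl(|\vbf_{i_m}| - |\vbf_{i_{m+1}}|\bigr) \ge 0$ (nonnegative by the descending order) shows $\{s_m\}$ is non-decreasing; together with $s_0 = 0 < \psi$ and $s_d = \| \vbf \|_1 > \psi$, this makes $\bar m \in [d]$ exist and be unique, and forces the jump of $\{s_m\}$ past $\psi$ to occur at a strict decrease of the sorted magnitudes, which is what makes the bracketing of $\mu$ consistent even when there are ties. I expect the only genuinely delicate point to be this last step — converting the interval condition on $\mu$ into the $s_m$-description and checking the tie cases; the rest is the standard KKT computation for projection onto an $\ell_1$-ball.
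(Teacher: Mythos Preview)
Your proposal is correct and follows essentially the same approach as the paper: write the KKT conditions for the $\ell_1$-ball constraint, obtain the soft-thresholding formula $\z_i^* = \sgn(\vbf_i)\max(|\vbf_i|-\mu,0)$, and determine the multiplier from the active constraint $\|\z^*\|_1=\psi$. If anything, your write-up is slightly more complete than the paper's, since you explicitly verify the monotonicity identity $s_m - s_{m-1} = m(|\vbf_{i_m}|-|\vbf_{i_{m+1}}|)\ge 0$ and spell out the two-sided correspondence between the bracket $|\vbf_{i_{\bar m+1}}|\le \mu < |\vbf_{i_{\bar m}}|$ and the condition $s_{\bar m-1}<\psi\le s_{\bar m}$, whereas the paper simply asserts the implication in one direction.
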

\begin{remark}
    According to the formulae given by Lemma~\ref{lm: comp.l12prox} and~\ref{lm: l1consprox}, computing $z^*$ of \eqref{opt: l1squareprox} or \eqref{opt: l1consprox} only requires $O(d)$ fundamental operations.
\end{remark}
\subsection{Choices of $G^k$}
Next we discuss choices of the gradient estimate $G^k$ and its variance in infinity norm.\\
{\bf Minibatch.} First we consider using a finite batch of samples to estimate the gradient $g^k \triangleq \nabla f(\x^k)$. Although the batchsize is not necessarily small, we borrow the name \textit{minibatch} considering that accurate estimation of $\nabla f(\x^k)$ will take probably huge or even infinite number of samples. More specifically, let
\begin{align}\label{def: Gk-SGD}
    G^k \triangleq \frac{1}{m} \sum_{i = 1}^m \nabla F(\x^k,\zeta^k_i),
\end{align}
where $\zeta^k_i$ are i.i.d realizations of $\zeta$. Then the following results regarding the sub-Gaussian assumption of the bias hold. Since the results are relatively basic, we relegate the proof to Appendix~\ref{app: proof}.
\begin{lemma}\label{lm: subG} Assumption~\ref{ass: subG} holds and consider the minibatch sampling \eqref{def: Gk-SGD}. Let $\bar w^k \triangleq \frac{1}{m} \sum_{i=1}^{m} (\nabla F(\x^k, \zeta^k_i) - \nabla f(\x^k))$, then for any $k$,
    \begin{itemize}
        \item[1.] $ \bE[\bar w^k \mid \x^k] = 0 $,
        \item[2.] $ \bE[\exp( t \bar w^k_i ) \mid \x^k] \le \exp(t^2 \cdot \sigma_\infty ^2/(2m)) $, for any $t \in \R$, $i = 1,\hdots,d$,
        \item[3.] $ \bE[ \| \bar w^k \|_\infty^2 \mid \x^k] \le 6 \log (2d) \sigma_\infty^2/m $. 
    \end{itemize}
\end{lemma}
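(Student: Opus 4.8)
The plan is to condition throughout on $\x^k$ and exploit that the fresh samples $\zeta^k_1,\dots,\zeta^k_m$ used in \eqref{def: Gk-SGD} are i.i.d.\ and (by the sampling scheme) drawn independently of $\x^k$. Consequently the summands $w(\x^k,\zeta^k_i)\triangleq\nabla F(\x^k,\zeta^k_i)-\nabla f(\x^k)$ are, conditionally on $\x^k$, i.i.d.\ copies of the vector $w(\x^k)$ from Assumption~\ref{ass: subG}. Part~1 is then immediate: by Assumption~\ref{ass: subG}(1), $\bE[w(\x^k,\zeta^k_i)\mid\x^k]=0$ for every $i$, and linearity of conditional expectation gives $\bE[\bar w^k\mid\x^k]=0$.

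For Part~2, fix a coordinate $i$ and a scalar $t$. Since $\bar w^k_i=\frac1m\sum_{j=1}^m w_i(\x^k,\zeta^k_j)$ and the summands are conditionally independent, the conditional moment generating function factorizes, and Assumption~\ref{ass: subG}(2) applied with parameter $t/m$ yields
\begin{align*}
\bE[\exp(t\bar w^k_i)\mid\x^k]=\prod_{j=1}^m\bE[\exp((t/m)w_i(\x^k,\zeta^k_j))\mid\x^k]\le\prod_{j=1}^m\exp\Big(\frac{\sigma_\infty^2 t^2}{2m^2}\Big)=\exp\Big(\frac{\sigma_\infty^2 t^2}{2m}\Big),
\end{align*}
i.e.\ $\bar w^k_i$ is conditionally $(\sigma_\infty^2/m)$-sub-Gaussian.

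For Part~3, I would first convert Part~2 into a tail bound: a Chernoff argument, optimizing $\lambda$ in $\Prob(\pm\bar w^k_i\ge s\mid\x^k)\le e^{-\lambda s}\bE[e^{\pm\lambda\bar w^k_i}\mid\x^k]$ and taking a union over the two signs, gives $\Prob(|\bar w^k_i|\ge s\mid\x^k)\le 2\exp(-ms^2/(2\sigma_\infty^2))$. A further union bound over the $d$ coordinates yields $\Prob(\max_i(\bar w^k_i)^2\ge s\mid\x^k)\le 2d\exp(-ms/(2\sigma_\infty^2))$, and integrating the tail of the nonnegative random variable $\|\bar w^k\|_\infty^2=\max_i(\bar w^k_i)^2$ gives, for any $u\ge0$,
\begin{align*}
\bE[\|\bar w^k\|_\infty^2\mid\x^k]=\int_0^\infty\Prob(\max_i(\bar w^k_i)^2\ge s\mid\x^k)\,ds\le u+2d\int_u^\infty e^{-ms/(2\sigma_\infty^2)}\,ds=u+\frac{4d\sigma_\infty^2}{m}e^{-mu/(2\sigma_\infty^2)}.
\end{align*}
Choosing $u=\frac{2\sigma_\infty^2}{m}\log(2d)$ makes the exponential term equal to $\frac{2\sigma_\infty^2}{m}$, so the bound becomes $\frac{2\sigma_\infty^2}{m}(\log(2d)+1)$; finally $\log(2d)+1\le 3\log(2d)$ since $\log(2d)\ge\log2>\tfrac12$, which delivers the claimed $6\log(2d)\sigma_\infty^2/m$.

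The only genuinely delicate step is Part~3: one must keep the sub-Gaussian tail two-sided and pick the split point $u$ so that the exponential contribution is only an additive $O(\sigma_\infty^2/m)$ term, which is what pins down the explicit constant $6$; everything else is bookkeeping. (One could instead invoke the textbook maximal inequality $\bE[\max_{i\le d}X_i^2]\le C\tau^2\log d$ for $\tau^2$-sub-Gaussian $X_i$, but tracking the concrete constant is cleanest via the direct tail integration above.)
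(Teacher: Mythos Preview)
Your argument is correct. Parts~1 and~2 coincide with the paper's proof (unbiasedness plus factorization of the MGF for i.i.d.\ summands). For Part~3 you take a genuinely different route from the paper. The paper invokes the sub-Gaussian property $\bE[\exp((\bar w^k_j)^2 m/(6\sigma_\infty^2))\mid\x^k]\le 2$, then applies Jensen's inequality to $\exp(t\,\bE[\|\bar w^k\|_\infty^2])$, pulls the expectation inside, replaces the max by a sum over coordinates, and reads off the bound by setting $t=m/(6\sigma_\infty^2)$. Your approach instead converts Part~2 into a two-sided Chernoff tail, takes a union bound over coordinates, and integrates the tail of $\|\bar w^k\|_\infty^2$ with a truncation point $u$ chosen to balance the two pieces. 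Both are standard and land on the identical constant $6\log(2d)$; the paper's version is slightly shorter because it offloads the work to the ready-made exponential-square-moment bound, while yours is more self-contained in that it only uses the MGF bound from Part~2 directly and makes the emergence of the constant~$6$ fully explicit.
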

\begin{remark}
By \eqref{def: Gk-SGD} and Lemma~\ref{lm: subG}, we have that $\exists c > 0$, s.t., $\forall k$,
\begin{align} \label{redvar-SGD}
\bE[\| g^k - G^k \|_\infty^2] \le \frac{c(\log d)\sigmainf^2}{m}.
\end{align}
\end{remark}

\noindent{\bf Variance Reduction.} Second, we can also apply the variance reduction technique to construct the sampled gradient \cite{johnson2013accelerating}. The technique requires using relatively large number of samples intermittently when estimating $g^k$, and reuse this relatively more accurate estimation in other iterations to improve sampling efficiency. 

Given interval length $q$, define $n_k \triangleq \lfloor \frac{k-1}{q} \rfloor q + 1$. Then $G^k$ is computed as follows:
\begin{align}\label{def: Gk-SVRG}
& G^k \triangleq \\
\notag
& \begin{cases}
    \frac{1}{m_k} \sum_{i=1}^{m_k} \nabla f(\x^k, \zeta^k_i),
    & \mbox{if} \mod(k,q) = 1,\\
    \frac{1}{m_k} \left( \sum_{i=1}^{m_k} \nabla F(\xbf^k,\zeta^k_i) - \sum_{i=1}^{m_k} \nabla F(\xbf^{n_k},\zeta^k_i) \right) + G^{n_k},  & \mbox{otherwise,}
\end{cases}
\end{align}
where $\zeta^k_i$ are i.i.d realizations of $\zeta$. Similar to \eqref{redvar-SGD}, for any $n_k$,
\begin{align}\label{redvar-SVRG1}
\bE [ \| g^{n_k} - G^{n_k} \|_\infty^2 ] \le \frac{c(\log d)\sigma_\infty^2}{m_{n_k}}.    
\end{align}
Suppose that 
\begin{align}\label{mk}
\begin{cases}
    m_k \equiv m & \mbox{if } k \neq n_k \\
m_{n_k} \equiv m_1  & \mbox{otherwise}.
\end{cases}
\end{align} 
Then we have the following result regarding the upper bound for the averaged variance across all the iterates in infinity norm.
\begin{lemma}\label{lm: svrgbd} 
{Consider Algorithm~\ref{alg: DI-SGD}.  Suppose that Assumption~\ref{assumption first-order oracle} to~\ref{ass: Lip} hold.} Consider using variance reduction to estimate $G^k$ (\eqref{def: Gk-SVRG}\eqref{mk}). Then
\begin{align}
\label{redvar-SVRG2}
 \bE[ \| \nabla f(\x^Y) - G^Y \|_\infty^2 ]
    \le \frac{8L^2q^2}{m} \bE[ \| \x^{Y+1} - \x^Y \|^2 ] + \frac{2c(\log d)\sigma_\infty^2}{m_1}.
\end{align}
\end{lemma}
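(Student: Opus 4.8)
The plan is to bound the gradient‑estimation error at a generic iterate $k$ and then average over the uniformly random index $Y$. Fix $k\in[K]$ and recall $n_k=\lfloor(k-1)/q\rfloor q+1$, so that $n_k\le k$, $k-n_k\le q-1$, and (since $n_k-1$ is a multiple of $q$) $n_{n_k}=n_k$; thus $G^{n_k}$ is the large‑batch estimate $\frac{1}{m_1}\sum_{i=1}^{m_1}\nabla F(\x^{n_k},\zeta^{n_k}_i)$. Unrolling \eqref{def: Gk-SVRG}--\eqref{mk} and adding and subtracting $g^{n_k}$, with $g^k\triangleq\nabla f(\x^k)$, I would write
\begin{align*}
G^k-g^k=\big(G^{n_k}-g^{n_k}\big)+e^k,\qquad e^k\triangleq\frac{1}{m}\sum_{i=1}^m\big(\nabla F(\x^k,\zeta^k_i)-\nabla F(\x^{n_k},\zeta^k_i)\big)-\big(g^k-g^{n_k}\big),
\end{align*}
with the convention $e^k\triangleq0$ when $k=n_k$. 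Then $\|g^k-G^k\|_\infty^2\le 2\|G^{n_k}-g^{n_k}\|_\infty^2+2\|e^k\|_\infty^2$, and \eqref{redvar-SVRG1} together with \eqref{mk} bounds $\bE[\|G^{n_k}-g^{n_k}\|_\infty^2]\le c(\log d)\sigma_\infty^2/m_1$.

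The crux is the estimate for $\bE[\|e^k\|_\infty^2]$. Conditioning on the trajectory up to $\x^k$ — so that $\x^k$ and $\x^{n_k}$ are determined while the $\zeta^k_i$ remain fresh i.i.d.\ samples — the vector $e^k$ is an average of $m$ i.i.d.\ zero‑mean vectors. Hence I would bound, using $\|\cdot\|_\infty\le\|\cdot\|_2$ and that a variance is dominated by the corresponding second moment,
\begin{align*}
\bE\big[\|e^k\|_\infty^2\,\big|\,\x^k\big]\le\bE\big[\|e^k\|_2^2\,\big|\,\x^k\big]\le\frac{1}{m}\,\bE\big[\|\nabla F(\x^k,\zeta)-\nabla F(\x^{n_k},\zeta)\|_2^2\,\big|\,\x^k\big]\le\frac{L^2}{m}\,\|\x^k-\x^{n_k}\|^2,
\end{align*}
the last step by Assumption~\ref{ass: Lip}. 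The important observation is that passing through the $2$‑norm here costs \emph{no} factor of $d$, precisely because $L$ is the Euclidean Lipschitz constant. Applying Cauchy--Schwarz to $\x^k-\x^{n_k}=\sum_{j=n_k}^{k-1}\Delta\x^{j+1}$ together with $k-n_k\le q$ then gives $\bE[\|e^k\|_\infty^2]\le\frac{L^2q}{m}\sum_{j=n_k}^{k-1}\bE[\|\Delta\x^{j+1}\|^2]$.

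Finally I would average over $Y$. Since $Y$ is uniform on $[K]$ and independent of the trajectory,
\begin{align*}
\bE\big[\|\nabla f(\x^Y)-G^Y\|_\infty^2\big]=\frac1K\sum_{k=1}^K\bE\big[\|g^k-G^k\|_\infty^2\big]\le\frac{2c(\log d)\sigma_\infty^2}{m_1}+\frac{2L^2q}{mK}\sum_{k=1}^K\sum_{j=n_k}^{k-1}\bE\big[\|\Delta\x^{j+1}\|^2\big].
\end{align*}
A short counting argument shows each index $j$ occurs in the double sum only for those $k>j$ lying in the same length‑$q$ block as $j$, hence at most $q-1$ times, so $\sum_{k=1}^K\sum_{j=n_k}^{k-1}\bE[\|\Delta\x^{j+1}\|^2]\le q\sum_{j=1}^K\bE[\|\x^{j+1}-\x^j\|^2]=qK\,\bE[\|\x^{Y+1}-\x^Y\|^2]$; substituting yields the claim (in fact with constant $2$ in place of $8$, so there is slack). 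The only mildly delicate points are keeping the conditioning/filtration bookkeeping correct for the zero‑mean property of $e^k$ and the combinatorial count of repetitions in the double sum; everything else is routine.
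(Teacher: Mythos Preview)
Your proof is correct and follows essentially the same route as the paper: decompose $G^k-g^k$ into the anchor error $G^{n_k}-g^{n_k}$ and the mean--zero term $e^k$, pass to the $2$--norm on $e^k$ so that the Euclidean Lipschitz constant can be invoked without a dimension factor, and then average over $Y$ with a block--counting argument. The only (minor) difference is that you bound the conditional variance of $e^k$ directly by the second moment $\tfrac{1}{m}\bE[\|\nabla F(\x^k,\zeta)-\nabla F(\x^{n_k},\zeta)\|_2^2]$, whereas the paper applies one extra $(a+b)^2\le 2a^2+2b^2$ split there; this is why you end up with the constant $2$ instead of $8$.
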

\begin{remark}
We defer the proof to Appendix~\ref{app: proof} since the proof in $\| \cdot \|_\infty$ is similar to the one in $\| \cdot \|$, which is much discussed in literature. Lemma~\ref{lm: svrgbd} indicates that we can bound the average of gradient variance in $\| \cdot \|_\infty$ by summation of two terms, the average square of difference of consecutive iterates and a constant depending only on $\log d$. Note that Lemma~\ref{lm: svrgbd} holds regardless of how we define the proximal projection step (Step 2.2 in Algorithm~\ref{alg: DI-SGD}) as long as the iterates are within $X$. To control the variance to a low level, we only need to pick a large $m_1$. Requirement on the size of $m$ is less restrictive since the average $\bE[\| x^{Y+1} - x^Y \|^2]$ can be controlled at a low level when iteration count $K$ is large. Therefore the sampling efficiency is improved.
\end{remark}

\section{Convergence and complexity analysis}
\label{sec: analysis}
In this section we analyze Algorithm~\ref{alg: DI-SGD} in the various settings discussed in the last section and derive dimension-insensitive sample complexity guarantees. 

First we provide two key lemmas useful for analyzing all the cases.
\subsection{Key lemmas}
Throughout this section, suppose that Assumption~\ref{assumption first-order oracle}-\ref{ass: setw3hj} hold and let $\Delta \triangleq \bE[f(x^1)] - \underline{f}$. We start with the following lemma which holds without specifying the choices of $\phi$ or $G^k$.
\begin{lemma}\label{lm: errbd1} Consider Algorithm~\ref{alg: DI-SGD} with \eqref{ineq: proj}. Then for any positive scalars $\gamma, \tau, t$, we have
\begin{align}
\notag
& \left( \frac{1}{\eta} - \frac{L}{2} - \frac{\gamma}{2 \eta} \right) \bE [ \| \Delta \xbf^{Y+1} \|^2 ] + \frac{1}{\eta} \bE[ ( \xi^{Y+1} )^T  (\ybf^{Y+1} - \xbf^Y) ]
\\
\notag
& \le \frac{\Delta}{K} + \frac{\tau + t}{2} \bE[ \| g^Y - G^Y \|_\infty^2 ] + \left( \frac{1}{2\tau} + \frac{1}{2\eta\gamma} \right) \eh^2  + \frac{1}{2t} \bE [ \| \y^{Y+1} - \x^Y \|_1^2 ] \\
\label{errbd1}
& + \frac{\eh}{ \eta } \bE[ \| \xi^{Y+1} \|_\infty ].
\end{align}
\end{lemma}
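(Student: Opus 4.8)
The plan is to derive a one-step descent inequality for each $k \in [K]$, then sum over $k$ and divide by $K$, exploiting that the output index $Y$ is uniformly distributed on $[K]$. First I would combine the smoothness bound \eqref{ineq: Taylor} applied with $\xbf = \x^k$ and $\ybf = \x^{k+1}$, namely $f(\x^{k+1}) \le f(\x^k) + (g^k)^T \Delta\x^{k+1} + \frac{L}{2}\|\Delta\x^{k+1}\|^2$, with the inexact optimality condition \eqref{ineq: proj1} evaluated at the feasible test point $\xbf = \x^k \in X$. Since $(\Delta\x^{k+1})^T(\x^k - \x^{k+1}) = -\|\Delta\x^{k+1}\|^2$, the latter rearranges to $\eta (G^k)^T\Delta\x^{k+1} \le -\|\Delta\x^{k+1}\|^2 + (\xi^{k+1})^T(\x^k - \x^{k+1}) + (\Gamma^{k+1})^T(\x^k - \x^{k+1})$. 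Writing $(g^k)^T\Delta\x^{k+1} = (G^k)^T\Delta\x^{k+1} + (g^k - G^k)^T\Delta\x^{k+1}$ and substituting the above bound into the smoothness estimate leaves a handful of residual terms to control.

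Next I would control those terms by repeatedly inserting the decomposition $\Delta\x^{k+1} = (\x^{k+1} - \y^{k+1}) + (\y^{k+1} - \x^k)$ and invoking \eqref{eq: proj2}--\eqref{ineq: proj3}, H\"older's inequality (pairing $\|\cdot\|_1$ with $\|\cdot\|_\infty$), and Young's inequality with the free parameters $\gamma,\tau,t$. Specifically: in $(\xi^{k+1})^T(\x^k - \x^{k+1})$, the $\y^{k+1}-\x^k$ component contributes the term $-(\xi^{k+1})^T(\y^{k+1}-\x^k)$ that will be moved to the left-hand side, while the $\x^{k+1}-\y^{k+1}$ component is bounded by $\|\xi^{k+1}\|_\infty\|\x^{k+1}-\y^{k+1}\|_1 \le \eh\|\xi^{k+1}\|_\infty$ using \eqref{ineq: proj3}; the term $(\Gamma^{k+1})^T(\x^k - \x^{k+1}) \le \eh\|\Delta\x^{k+1}\| \le \frac{\gamma}{2}\|\Delta\x^{k+1}\|^2 + \frac{1}{2\gamma}\eh^2$, once divided by $\eta$, supplies the $-\frac{\gamma}{2\eta}\|\Delta\x^{k+1}\|^2$ correction on the left and the $\frac{1}{2\eta\gamma}\eh^2$ piece on the right; and $(g^k - G^k)^T\Delta\x^{k+1}$ splits into $(g^k-G^k)^T(\x^{k+1}-\y^{k+1}) \le \eh\|g^k-G^k\|_\infty \le \frac{\tau}{2}\|g^k-G^k\|_\infty^2 + \frac{1}{2\tau}\eh^2$ and $(g^k - G^k)^T(\y^{k+1}-\x^k) \le \frac{t}{2}\|g^k-G^k\|_\infty^2 + \frac{1}{2t}\|\y^{k+1}-\x^k\|_1^2$.

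Collecting these estimates yields, for each $k$,
\begin{align*}
& \left( \frac{1}{\eta} - \frac{L}{2} - \frac{\gamma}{2\eta} \right)\|\Delta\x^{k+1}\|^2 + \frac{1}{\eta}(\xi^{k+1})^T(\y^{k+1}-\x^k) \le f(\x^k) - f(\x^{k+1}) \\
& \qquad + \frac{\tau+t}{2}\|g^k-G^k\|_\infty^2 + \left( \frac{1}{2\tau} + \frac{1}{2\eta\gamma} \right)\eh^2 + \frac{1}{2t}\|\y^{k+1}-\x^k\|_1^2 + \frac{\eh}{\eta}\|\xi^{k+1}\|_\infty .
\end{align*}
Then I would sum this over $k = 1,\dots,K$, so that the $f$-differences telescope to $f(\x^1) - f(\x^{K+1}) \le f(\x^1) - \lf$ by Assumption~\ref{ass: setw3hj}; take total expectation; divide by $K$; and finally use $\bE[f(\x^1)] - \lf = \Delta$ together with the fact that $Y$ is uniform on $[K]$ and is drawn at the end, so that $\frac{1}{K}\sum_{k=1}^K \bE[(\cdot)^k] = \bE[(\cdot)^Y]$ by the tower property. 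This reproduces exactly \eqref{errbd1}.

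The argument is largely careful bookkeeping rather than a deep estimate; the main thing to get right is the three-way split of the cross terms so that the coefficients of $\eh^2$, of $\bE[\|g^Y-G^Y\|_\infty^2]$, and of $\bE[\|\y^{Y+1}-\x^Y\|_1^2]$ come out precisely as stated. In particular, the inexactness level $\eh$ must be routed both through the projection defect $\|\x^{k+1}-\y^{k+1}\|_1$ and through half of the gradient-noise cross term, while the genuinely stochastic part of the noise is deliberately paired against $\|\y^{k+1}-\x^k\|_1$ rather than against $\|\Delta\x^{k+1}\|$ — otherwise a dimension-dependent $\|g^Y-G^Y\|^2$ term would reappear and spoil the dimension-insensitive bound downstream.
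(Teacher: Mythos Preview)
Your proposal is correct and follows essentially the same approach as the paper's own proof: combine the descent lemma \eqref{ineq: Taylor} with the inexact optimality condition \eqref{ineq: proj1} at $\xbf = \x^k$, split the cross terms via $\Delta\x^{k+1} = (\x^{k+1}-\y^{k+1}) + (\y^{k+1}-\x^k)$, apply H\"older and Young with parameters $\tau,t,\gamma$, then telescope, take expectations, and average. The only cosmetic difference is that the paper applies Young's inequality to $(\Gamma^{k+1})^T\Delta\x^{k+1}$ before invoking $\|\Gamma^{k+1}\|\le\eh$, whereas you do it in the opposite order; the resulting bound is identical.
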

\begin{proof}
    We have that 
\begin{align*}
    & f(\xbf^{k+1}) \\
    & \overset{\eqref{ineq: Taylor}}{\le} f(\xbf^k) + (g^k)^T \Delta \xbf^{k+1} + \frac{L}{2} \| \Delta \xbf^{k+1} \|^2 \\
    & \le f(\xbf^k) + (g^k - G^k)^T \Delta \xbf^{k+1} + (G^k)^T \Delta \xbf^{k+1} +  \frac{L}{2} \| \Delta \xbf^{k+1} \|^2 \\
    & \overset{\eqref{ineq: proj1}}{\le} f(\xbf^k) + (g^k - G^k)^T ( \xbf^{k+1} - \ybf^{k+1} ) + (g^k - G^k)^T ( \ybf^{k+1} - \xbf^k ) \\
    & + \frac{1}{\eta}(\Delta \xbf^{k+1} + \xi^{k+1} + \Gamma^{k+1} )^T(\xbf^k - \xbf^{k+1}) +  \frac{L}{2} \| \Delta \xbf^{k+1} \|^2 \\
    & \le f(\xbf^k)  + \frac{\tau}{2} \| g^k - G^k \|_\infty^2 + \frac{1}{2\tau} \| \xbf^{k+1} - \ybf^{k+1} \|_1^2 + \frac{t}{2} \| g^k - G^k \|_\infty^2 \\
    & + \frac{1}{2t} \| \ybf^{k+1} - \xbf^k \|_1^2 - \left( \frac{1}{\eta} - \frac{L}{2} \right) \| \Delta \xbf^{k+1} \|^2 - \frac{1}{\eta} ( \Gamma^{k+1} )^T  \Delta \xbf^{k+1} \\
    & + \frac{1}{\eta} ( \xi^{k+1} )^T  (\xbf^k - \ybf^{k+1}) + \frac{1}{\eta} ( \xi^{k+1} )^T  (\ybf^{k+1} - \xbf^{k+1}) \\
    & \le f(\xbf^k)  + \frac{\tau}{2} \| g^k - G^k \|_\infty^2 + \frac{1}{2\tau} \| \xbf^{k+1} - \ybf^{k+1} \|_1^2 + \frac{t}{2} \| g^k - G^k \|_\infty^2 \\
    & + \frac{1}{2t} \| \ybf^{k+1} - \xbf^k \|_1^2 - \left( \frac{1}{\eta} - \frac{L}{2} \right) \| \Delta \xbf^{k+1} \|^2 + {\frac{\gamma}{2\eta} \| \Delta \xbf^{k+1} \|^2 + \frac{1}{2 \eta \gamma} \| \Gamma^{k+1} \|^2} \\
    & + \frac{1}{\eta} ( \xi^{k+1} )^T  (\xbf^k - \ybf^{k+1}) + \frac{1}{\eta} ( \xi^{k+1} )^T  (\ybf^{k+1} - \xbf^{k+1}) \\
    & \overset{\eqref{ineq: proj3}}{\le} f(\xbf^k)  + \frac{\tau}{2} \| g^k - G^k \|_\infty^2 + \frac{\eh^2}{2\tau} + \frac{t}{2} \| g^k - G^k \|_\infty^2 + \frac{1}{2t} \| \ybf^{k+1} - \xbf^k \|_1^2 \\
    & - \left( \frac{1}{\eta} - \frac{L}{2} - \frac{\gamma}{2\eta} \right) \| \Delta \xbf^{k+1} \|^2 + \frac{\eh^2}{2\eta \gamma} + \frac{1}{\eta} ( \xi^{k+1} )^T  (\xbf^k - \ybf^{k+1}) \\
    & + \frac{1}{\eta}\| \xi^{k+1} \|_\infty \| \y^{k+1} - \x^{k+1} \|_1 \\
    & \le f(\xbf^k)  + \frac{\tau}{2} \| g^k - G^k \|_\infty^2 + \frac{\eh^2}{2\tau} + \frac{t}{2} \| g^k - G^k \|_\infty^2 + \frac{1}{2t} \| \ybf^{k+1} - \xbf^k \|_1^2 \\
    & - \left( \frac{1}{\eta} - \frac{L}{2} - \frac{\gamma}{2\eta} \right) \| \Delta \xbf^{k+1} \|^2 + \frac{\eh^2}{2\eta \gamma} + \frac{1}{\eta} ( \xi^{k+1} )^T  (\xbf^k - \ybf^{k+1}) + \frac{\eh}{\eta}\| \xi^{k+1} \|_\infty \\ 
\end{align*}
By rearranging terms, we have that
\begin{align*}
    & \left( \frac{1}{\eta} - \frac{L}{2} - \frac{\gamma}{2 \eta} \right) \| \Delta \xbf^{k+1} \|^2 + \frac{1}{\eta} ( \xi^{k+1} )^T  (\ybf^{k+1} - \xbf^k) \\
    & \le f(\xbf^k) - f(\xbf^{k+1}) + \frac{\tau + t}{2} \| g^k - G^k \|_\infty^2 + \left( \frac{1}{2\tau} + \frac{1}{2\eta\gamma} \right) \eh^2 + \frac{1}{2t} \| \y^{k+1} - \x^k \|_1^2 \\
    & + \frac{\eh}{\eta} \| \xi^{k+1} \|_\infty.
\end{align*}
Sum up the above inequality for $k = 1,\hdots,K$ and take expectation on both sides, we have
\begin{align*}
   &  \left( \frac{1}{\eta} - \frac{L}{2} - \frac{\gamma}{2 \eta} \right) \sum_{k=1}^K \bE [ \| \Delta \xbf^{k+1} \|^2 ] + \frac{1}{\eta} \sum_{k=1}^K \bE[ ( \xi^{k+1} )^T  (\ybf^{k+1} - \xbf^k) ] \\
    & \le \bE[ f(\xbf^1) - f(\xbf^{K+1}) ] + \frac{\tau + t}{2} \sum_{k=1}^K \bE[ \| g^k - G^k \|_\infty^2 ]
    + \left( \frac{1}{2\tau} + \frac{1}{2\eta\gamma} \right) K \eh^2 \\
    & + \frac{1}{2t} \sum_{k=1}^K \bE [ \| \y^{k+1} - \x^k \|_1^2 ] + \frac{\eh}{\eta} \sum_{k=1}^K \bE[ \| \xi^{k+1} \|_\infty ].
\end{align*}
Recall $\Delta \triangleq \bE[f(\x^1)] - \lf \ge \bE[ f(\xbf^1) - f(\xbf^{K+1}) ]$ and divide both sides by $K$. Then the result follows.\qed
% above indicates the following:
% \begin{align}%\label{errbd1}
% \begin{aligned}
% \left( \frac{1}{\eta} - \frac{L}{2} - \frac{\gamma}{2 \eta} \right) \bE [ \| \Delta \xbf^{Y+1} \|^2 ] + \frac{1}{\eta} \bE[ ( \xi^{Y+1} )^T  (\ybf^{Y+1} - \xbf^Y) ] 
%     \le \frac{\Delta}{K} + \frac{\tau + t}{2} \bE[ \| g^Y - G^Y \|_\infty^2 ] \\
%     + \left( \frac{1}{2\tau} + \frac{1}{2\eta\gamma} \right) \eh^2 + \frac{1}{2t} \bE [ \| \y^{Y+1} - \x^Y \|_1^2 ] + \frac{\eh}{ \eta } \bE[ \| \xi^{Y+1} \|_\infty ].
% \end{aligned}
% \end{align}
\end{proof}

\iffalse
Define $x^{k+1}_* \in \argmin_{x \in X} \nabla f(x^{k+1})^T(x - x^{k+1})$, $\forall k = 1,\hdots, K$. Our goal is to bound the following residual:
\begin{align}
\notag
    & - \bE[ \min_{\x \in X} \nabla f(\x^{Y+1})^T(\x - \x^{Y+1})] \\\notag
    & = - \bE[ \nabla f(\x^{Y+1})^T ( \x^{Y+1}_* - \x^{Y+1} ) ] \\\notag
    & = - \bE[ (\nabla f(\x^{Y+1}) - g^Y)^T (\x^{Y+1}_* - \x^{Y+1}) ]
    - \bE[(g^Y - G^Y)^T( \x^{Y+1}_* - \x^{Y+1} )] - \bE[ (G^Y)^T ( \x^{Y+1}_* - \x^{Y+1} ) ] \\\notag
    & \overset{\eqref{ineq: proj1}}{\le} \bE[ \| \nabla f(\x^{Y+1}) - g^Y \| \| \x^{Y+1}_* - \x^{Y+1} \| ]
    + \bE[ \| g^Y - G^Y \|_\infty \| \x^{Y+1}_* - \x^{Y+1} \|_1] \\\notag
    & + \frac{1}{\eta} \bE[ (\Delta \x^{Y+1} + \xi^{Y+1} + \Gamma^{Y+1})^T(\x^{Y+1}_* - \x^{Y+1}) ] \\\notag
    & \le RL \bE[ \| \Delta \x^{Y+1} \| ] + R \bE[ \| g^Y - G^Y \|_\infty ]
    + \frac{1}{\eta} (\bE[ \| \Delta \x^{Y+1} \| \| \x^{Y+1}_* - \x^{Y+1} \| ] + \bE[ \| \xi^{Y+1} \|_\infty \|  \x^{Y+1}_* - \x^{Y+1} \|_1 ]  \\\notag
    & + \bE[ \| \Gamma^{Y+1} \| \| \x^{Y+1}_* - \x^{Y+1} \| ] ) \\\notag
    & \le \left(RL + \frac{R}{\eta} \right) \bE[ \| \Delta \x^{Y+1} \| ] + R \bE[ \| g^Y - G^Y \|_\infty ] + \frac{R}{\eta} \bE[ \| \xi^{Y+1} \|_\infty ] + \frac{R \eh}{\eta}\\
    \label{errbd2}
    & \le \left(RL + \frac{R}{\eta} \right) \sqrt{\bE[ \| \Delta \x^{Y+1} \|^2 ]} + R \sqrt{\bE[ \| g^Y - G^Y \|_\infty^2 ]} + \frac{R}{\eta} \bE[ \| \xi^{Y+1} \|_\infty ] + \frac{R \eh}{\eta}.
\end{align}
\fi
The residual of interest is ${\rm dist}_{\| \cdot \|_\infty}(0, \partial(f+\delta_X)(\x^{Y+1}))] $. The following lemma provides an upper bound for its expected value.
\begin{lemma} Consider Algorithm~\ref{alg: DI-SGD} with \eqref{ineq: proj}. We have
    \begin{align}
\notag
    & \bE[ {\rm dist}_{\| \cdot \|_\infty}(0, \partial(f+\delta_X)(\x^{Y+1}))] \\
\label{errbd2}
    & \le \left(L + \frac{1}{\eta} \right) \sqrt{\bE[ \| \Delta \x^{Y+1} \|^2 ]} +  \sqrt{\bE[ \| g^Y - G^Y \|_\infty^2 ]} + \frac{1}{\eta} \bE[ \| \xi^{Y+1} \|_\infty ] + \frac{\eh}{\eta}.
\end{align}
\end{lemma}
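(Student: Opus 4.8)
The plan is to exhibit, for each iteration index $k$, an explicit element of $\partial(f+\delta_X)(\xbf^{k+1})$ whose $\|\cdot\|_\infty$-norm is controlled by the quantities appearing on the right-hand side of \eqref{errbd2}, then specialize to $k=Y$ and take expectations. The starting point is the (approximate) optimality condition \eqref{ineq: proj1}: it says exactly that
\[
-\tfrac{1}{\eta}\bigl(\Delta \xbf^{k+1} + \eta G^k + \xi^{k+1} + \Gamma^{k+1}\bigr) \in \sN_X(\xbf^{k+1}),
\]
since $\sN_X(\xbf^{k+1}) = \{\vbf : \vbf^T(\xbf - \xbf^{k+1}) \le 0 \ \forall \xbf \in X\}$. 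Because $\partial(f+\delta_X)(\xbf^{k+1}) = \nabla f(\xbf^{k+1}) + \sN_X(\xbf^{k+1})$, the vector
\[
\vbf^{k+1} \triangleq \nabla f(\xbf^{k+1}) - G^k - \tfrac{1}{\eta}\Delta \xbf^{k+1} - \tfrac{1}{\eta}\xi^{k+1} - \tfrac{1}{\eta}\Gamma^{k+1}
\]
belongs to $\partial(f+\delta_X)(\xbf^{k+1})$, hence ${\rm dist}_{\|\cdot\|_\infty}(0,\partial(f+\delta_X)(\xbf^{k+1})) \le \|\vbf^{k+1}\|_\infty$.

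Next I would bound $\|\vbf^{k+1}\|_\infty$ by the triangle inequality, first splitting the gradient term as $\nabla f(\xbf^{k+1}) - G^k = \bigl(\nabla f(\xbf^{k+1}) - g^k\bigr) + \bigl(g^k - G^k\bigr)$. For the first piece, since $\nabla f$ is $L$-Lipschitz (the consequence of Assumption~\ref{ass: Lip} noted after it) and $\|\cdot\|_\infty \le \|\cdot\|$, we get $\|\nabla f(\xbf^{k+1}) - g^k\|_\infty \le L\|\Delta\xbf^{k+1}\|$. Similarly $\tfrac{1}{\eta}\|\Delta\xbf^{k+1}\|_\infty \le \tfrac{1}{\eta}\|\Delta\xbf^{k+1}\|$, and $\tfrac{1}{\eta}\|\Gamma^{k+1}\|_\infty \le \tfrac{1}{\eta}\|\Gamma^{k+1}\| \le \tfrac{\eh}{\eta}$ by \eqref{ineq: proj3}. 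Collecting terms,
\[
{\rm dist}_{\|\cdot\|_\infty}(0,\partial(f+\delta_X)(\xbf^{k+1})) \le \Bigl(L + \tfrac{1}{\eta}\Bigr)\|\Delta\xbf^{k+1}\| + \|g^k - G^k\|_\infty + \tfrac{1}{\eta}\|\xi^{k+1}\|_\infty + \tfrac{\eh}{\eta}.
\]

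Finally I would set $k = Y$, take expectations of both sides, and apply Jensen's inequality (concavity of $t\mapsto\sqrt{t}$) to replace $\bE[\|\Delta\xbf^{Y+1}\|]$ and $\bE[\|g^Y - G^Y\|_\infty]$ by $\sqrt{\bE[\|\Delta\xbf^{Y+1}\|^2]}$ and $\sqrt{\bE[\|g^Y - G^Y\|_\infty^2]}$ respectively; the remaining two terms are already in the desired form, yielding \eqref{errbd2}. There is no serious obstacle here: the only points requiring care are (i) getting the sign right so that the claimed vector genuinely lies in the normal cone, and hence $\vbf^{k+1}\in\partial(f+\delta_X)(\xbf^{k+1})$, and (ii) consistently using $\|\cdot\|_\infty \le \|\cdot\|_2$ when passing from the Lipschitz bound and the $\Gamma^{k+1}$ bound (both stated in $2$-norm) to the $\infty$-norm residual.
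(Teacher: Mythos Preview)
Your proposal is correct and follows essentially the same route as the paper: exhibit the vector $\nabla f(\xbf^{k+1}) - \tfrac{1}{\eta}(\Delta\xbf^{k+1}+\eta G^k+\xi^{k+1}+\Gamma^{k+1})$ as an element of $\partial(f+\delta_X)(\xbf^{k+1})$ via \eqref{ineq: proj1}, split $\nabla f(\xbf^{k+1})-G^k$ through $g^k$, bound each piece using Lipschitzness, $\|\cdot\|_\infty\le\|\cdot\|$, and \eqref{ineq: proj3}, and finish with Jensen.
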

\begin{proof}
Note that due to \eqref{ineq: proj1}, we have that
\begin{align*}
    \nabla f(\x^{Y+1}) - \frac{1}{\eta}(\Delta \x^{Y+1} + \eta G^Y + \xi^{Y+1} + \Gamma^{Y+1} ) \in \partial(f+\delta_X)(\x^{Y+1}). 
\end{align*}
Therefore, 
\begin{align}
\notag
    & \bE[ {\rm dist}_{\| \cdot \|_\infty}(0, \partial(f+\delta_X)(\x^{Y+1}))] \\
    \notag
    & \le \bE[\| \nabla f(\x^{Y+1}) - \frac{1}{\eta}(\Delta \x^{Y+1} + \eta G^Y + \xi^{Y+1} + \Gamma^{Y+1} ) \|_\infty] \\
    %\notag
    %& = \bE[\| \nabla f(\x^{Y+1}) - \nabla f(\x^Y) + \nabla f(\x^Y) - \frac{1}{\eta}(\Delta \x^{Y+1} + \eta G^Y + \xi^{Y+1} + \Gamma^{Y+1} ) \|_\infty] \\
    \notag
    & \le \bE[ \| \nabla f(\x^{Y+1}) - g^Y \|_\infty ]  + \bE[ \| g^Y - G^Y \|_\infty] \\
    \notag
    & + \frac{1}{\eta} \bE[ \| \Delta \x^{Y+1} + \xi^{Y+1} + \Gamma^{Y+1} \|_\infty ] \\\notag
    & \le L \bE[ \| \Delta \x^{Y+1} \| ] + \bE[ \| g^Y - G^Y \|_\infty ] \\
    \notag
    & + \frac{1}{\eta} (\bE[ \| \Delta \x^{Y+1} \| ] + \bE[ \| \xi^{Y+1} \|_\infty] + \bE[ \| \Gamma^{Y+1} \|_\infty ] ) \\\notag
    & \le \left(L + \frac{1}{\eta} \right) \bE[ \| \Delta \x^{Y+1} \| ] + \bE[ \| g^Y - G^Y \|_\infty ] + \frac{1}{\eta} \bE[ \| \xi^{Y+1} \|_\infty ] + \frac{\eh}{\eta}\\
    \notag
    & \le \left(L + \frac{1}{\eta} \right) \sqrt{\bE[ \| \Delta \x^{Y+1} \|^2 ]} +  \sqrt{\bE[ \| g^Y - G^Y \|_\infty^2 ]} + \frac{1}{\eta} \bE[ \| \xi^{Y+1} \|_\infty ] + \frac{\eh}{\eta}.
\end{align}\qed
\end{proof}
\subsection{Analysis for Case 1}
The main result for minibatch sampling is as follows.
\begin{theorem}\label{thm: case1mini} For Algorithm~\ref{alg: DI-SGD} with \eqref{ineq: proj}, consider case 1 \eqref{setting1} and estimating $G^k$ using minibatch \eqref{def: Gk-SGD}. Then
\begin{align}
\notag
    & \bE[ {\rm dist}_{\| \cdot \|_\infty}(0, \partial(f+\delta_X)(\x^{Y+1})) ] \\
\notag
    & \le  \left( L + \frac{1}{\eta} \right) \sqrt{\frac{\frac{\Delta}{K} + \frac{(\tau + t)c(\log d)\sigmainf^2}{2m} + \left( \frac{1}{2\tau} + \frac{1}{2\eta\gamma} + \frac{2\hrho}{\eta} \right) \eh^2}{1/\eta - L/2 - \gamma/(2\eta)}} +  \sqrt{ \frac{c(\log d)\sigmainf^2}{m} } \\
    \label{errbd4}
    & + \frac{\hrho}{\eta}\sqrt{\frac{\frac{\Delta}{K} + \frac{(\tau + t)c(\log d)\sigmainf^2}{2m} + \left( \frac{1}{2\tau} + \frac{1}{2\eta\gamma} + \frac{2\hrho}{\eta} \right) \eh^2}{\hrho/(8\eta)}} + \frac{ \eh}{\eta},
\end{align}    
holds when $1/\eta - L/2 - \gamma/(2\eta) > 0$, $t = 2\eta/\hrho$, $\tau > 0$, $\gamma > 0$.
\end{theorem}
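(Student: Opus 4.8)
The plan is to specialize the two generic estimates already in hand---Lemma~\ref{lm: errbd1} and the residual bound \eqref{errbd2}---to the Case~1 distance function $\phi(\x)=\frac{\hrho}{2}\|\x\|_1^2$, and then feed in the minibatch variance bound \eqref{redvar-SGD}. The first step is to read off the structure of $\xi^{Y+1}$ from Lemma~\ref{lm: l1subdiff}: since $\xi^{Y+1}\in\partial\phi(\ybf^{Y+1}-\x^Y)=\hrho\|\ybf^{Y+1}-\x^Y\|_1\,\partial\|\ybf^{Y+1}-\x^Y\|_1$ and every $\vbf\in\partial\|\z\|_1$ satisfies $\|\vbf\|_\infty\le 1$ and $\langle\vbf,\z\rangle=\|\z\|_1$, one obtains
\begin{align*}
(\xi^{Y+1})^T(\ybf^{Y+1}-\x^Y)=\hrho\,\|\ybf^{Y+1}-\x^Y\|_1^2\ge 0,\qquad \|\xi^{Y+1}\|_\infty\le\hrho\,\|\ybf^{Y+1}-\x^Y\|_1 .
\end{align*}
In particular, the cross term $\tfrac1\eta\bE[(\xi^{Y+1})^T(\ybf^{Y+1}-\x^Y)]$ on the left of \eqref{errbd1} equals $\tfrac{\hrho}{\eta}\bE[\|\ybf^{Y+1}-\x^Y\|_1^2]$, a nonnegative $\ell_1$ quantity we can keep.

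Next I would invoke Lemma~\ref{lm: errbd1} with $t=2\eta/\hrho$ (so $\tfrac1{2t}=\tfrac{\hrho}{4\eta}$) and arbitrary $\tau,\gamma>0$. Using the two relations above I would: move $\tfrac1{2t}\bE[\|\ybf^{Y+1}-\x^Y\|_1^2]$ from the right to the left, where it is dominated by $\tfrac{\hrho}{\eta}\bE[\|\ybf^{Y+1}-\x^Y\|_1^2]$; bound the inexactness term $\tfrac{\eh}{\eta}\bE[\|\xi^{Y+1}\|_\infty]\le\tfrac{\hrho\eh}{\eta}\bE[\|\ybf^{Y+1}-\x^Y\|_1]$ by Young's inequality, splitting it into a multiple of $\eh^2$ (left on the right-hand side) and a small multiple of $\bE[\|\ybf^{Y+1}-\x^Y\|_1^2]$ (re-absorbed on the left); and replace $\bE[\|g^Y-G^Y\|_\infty^2]$ by $c(\log d)\sigmainf^2/m$ via \eqref{redvar-SGD}. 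Writing
\begin{align*}
S\triangleq\frac{\Delta}{K}+\frac{(\tau+t)\,c(\log d)\sigmainf^2}{2m}+\Bigl(\frac1{2\tau}+\frac1{2\eta\gamma}+\frac{2\hrho}{\eta}\Bigr)\eh^2
\end{align*}
for the quantity under the square roots in \eqref{errbd4}, this produces
\begin{align*}
\Bigl(\frac1\eta-\frac L2-\frac{\gamma}{2\eta}\Bigr)\bE[\|\Delta\x^{Y+1}\|^2]+\frac{\hrho}{8\eta}\bE[\|\ybf^{Y+1}-\x^Y\|_1^2]\le S ;
\end{align*}
a cruder Young weight only enlarges the $\eh^2$ coefficient, so the displayed $2\hrho/\eta$ is a safe upper bound.

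Since $\tfrac1\eta-\tfrac L2-\tfrac{\gamma}{2\eta}>0$ by hypothesis and both terms on the left are nonnegative, each may be dropped to isolate the other, giving $\bE[\|\Delta\x^{Y+1}\|^2]\le S/\bigl(\tfrac1\eta-\tfrac L2-\tfrac{\gamma}{2\eta}\bigr)$ and $\bE[\|\ybf^{Y+1}-\x^Y\|_1^2]\le S/(\hrho/(8\eta))$. I would then substitute into \eqref{errbd2}: the first bound yields the first summand of \eqref{errbd4}; \eqref{redvar-SGD} again gives $\sqrt{c(\log d)\sigmainf^2/m}$; the chain $\tfrac1\eta\bE[\|\xi^{Y+1}\|_\infty]\le\tfrac{\hrho}{\eta}\bE[\|\ybf^{Y+1}-\x^Y\|_1]\le\tfrac{\hrho}{\eta}\sqrt{\bE[\|\ybf^{Y+1}-\x^Y\|_1^2]}$ (Jensen) together with the second bound yields the third summand; and $\eh/\eta$ is the last. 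Collecting the four terms finishes the proof.

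The step I expect to be the main obstacle is the bookkeeping in the middle paragraph: keeping the Euclidean quantity $\|\Delta\x^{Y+1}\|^2$ and the $\ell_1$-type quantities $\|\ybf^{Y+1}-\x^Y\|_1^2$, $\|\xi^{Y+1}\|_\infty$, and the inexactness $\eh$ from interfering, and in particular calibrating the free parameter $t$ and the Young weight so that $\bE[\|\ybf^{Y+1}-\x^Y\|_1^2]$ survives on the left with a clean positive coefficient (here $\hrho/(8\eta)$) while only $\eh^2$ and the variance term remain on the right. Once that balancing is arranged, the remainder is direct substitution into the two lemmas.
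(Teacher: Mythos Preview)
Your proposal is correct and follows essentially the same route as the paper: specialize Lemma~\ref{lm: errbd1} to Case~1 using the subdifferential description from Lemma~\ref{lm: l1subdiff}, set $t=2\eta/\hrho$, absorb the $\|\ybf^{Y+1}-\x^Y\|_1^2$ and $\|\xi^{Y+1}\|_\infty$ terms via Young's inequality to leave the coefficient $\hrho/(8\eta)$ on the left, apply \eqref{redvar-SGD}, and then substitute both pieces into \eqref{errbd2}. The only minor deviation is that you use the identity $(\xi^{Y+1})^T(\ybf^{Y+1}-\x^Y)=\hrho\|\ybf^{Y+1}-\x^Y\|_1^2$ (which is indeed exact, since $\langle\vbf,\z\rangle=\|\z\|_1$ for any $\vbf\in\partial\|\z\|_1$), whereas the paper invokes only convexity of $\phi$ and $\phi(0)=0$ to obtain the weaker $(\xi^{Y+1})^T(\ybf^{Y+1}-\x^Y)\ge\phi(\ybf^{Y+1}-\x^Y)=\tfrac{\hrho}{2}\|\ybf^{Y+1}-\x^Y\|_1^2$; your sharper starting coefficient $\hrho/\eta$ (versus $\hrho/(2\eta)$) gives you slack in the Young step, and as you note, the displayed $2\hrho/\eta$ in front of $\eh^2$ remains a valid upper bound.
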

\begin{proof}
By Lemma~\ref{lm: l1subdiff}
we have that
\begin{align*}
    \partial \phi(\z - \x^k) \subseteq \{ \hrho \| \z - \x^k \|_1 v \mid v_i \in [-1,1] \}.
\end{align*}
Therefore, 
\begin{align}\label{subgbd}
    \| \xi^{k+1} \|_\infty \le \hrho \| \y^{k+1} - \x^k \|_1
\end{align}
Note that $\phi(0) = 0$, then by \eqref{eq: proj2} and convexity of $\phi$,
\begin{align}\label{ineq: phi}
    ( \xi^{k+1} )^T  (\xbf^k - \ybf^{k+1}) + \phi(\ybf^{k+1}-\x^k) \le \phi(\x^k - \x^k) = \phi(0) = 0.
\end{align}

Since we consider using a minibatch of samples to estimate the gradient \eqref{def: Gk-SGD}. Then \eqref{errbd1}\eqref{setting1}\eqref{subgbd}\eqref{ineq: phi} lead to
\begin{align}
\notag
& \left( \frac{1}{\eta} - \frac{L}{2} - \frac{\gamma}{2 \eta} \right) \bE [ \| \Delta \xbf^{Y+1} \|^2 ] + \frac{\hrho}{2\eta} \bE[ \| \ybf^{Y+1} - \xbf^Y \|_1^2 ] \\\notag
& \le \left( \frac{1}{\eta} - \frac{L}{2} - \frac{\gamma}{2 \eta} \right) \bE [ \| \Delta \xbf^{Y+1} \|^2 ] + \frac{1}{\eta} \bE[ (\xi^{Y+1})^T (  \ybf^{Y+1} - \xbf^Y ) ] \\\notag
& \le \frac{\Delta}{K} + \frac{\tau + t}{2} \bE[ \| g^Y - G^Y \|_\infty^2 ] + \left( \frac{1}{2\tau} + \frac{1}{2\eta\gamma} \right) \eh^2 + \frac{1}{2t} \bE [ \| \y^{Y+1} - \x^Y \|_1^2 ] \\
\notag
& + \frac{\eh}{ \eta } \bE[ \| \xi^{Y+1} \|_\infty ] \\ \notag
& \le \frac{\Delta}{K} + \frac{\tau + t}{2} \bE[ \| g^Y - G^Y \|_\infty^2 ] + \left( \frac{1}{2\tau} + \frac{1}{2\eta\gamma} + \frac{2\hrho}{\eta} \right) \eh^2 + \frac{1}{2t} \bE [ \| \y^{Y+1} - \x^Y \|_1^2 ] \\
\notag
& + \frac{1}{ 8 \eta \hrho } \bE[ \| \xi^{Y+1} \|_\infty^2 ] \\ \notag
& \overset{\left(t = \frac{2\eta}{\hrho}\right)}{=} \frac{\Delta}{K} +  \frac{\tau + t}{2}  \bE[ \| g^Y - G^Y \|_\infty^2 ] + \left( \frac{1}{2\tau} + \frac{1}{2\eta\gamma} + \frac{2\hrho}{\eta} \right) \eh^2 \\
\notag
& + \frac{\hrho}{4\eta}  \bE[ \| \ybf^{Y+1} - \xbf^Y \|_1^2 ] +  \frac{\hrho}{8\eta} \bE[ \| \ybf^{Y+1} - \xbf^Y \|_1^2 ] \\
\notag
& \implies \left( \frac{1}{\eta} - \frac{L}{2} - \frac{\gamma}{2 \eta} \right) \bE [ \| \Delta \xbf^{Y+1} \|^2 ] + \frac{\hrho}{8\eta} \bE[ \| \ybf^{Y+1} - \xbf^Y \|_1^2 ] \\\notag
& \le \frac{\Delta}{K} +  \frac{\tau + t}{2}  \bE[ \| g^Y - G^Y \|_\infty^2 ] + \left( \frac{1}{2\tau} + \frac{1}{2\eta\gamma} + \frac{2\hrho}{\eta} \right) \eh^2 \\
& \overset{\eqref{redvar-SGD}}{\le} \frac{\Delta}{K} + \frac{(\tau + t)c(\log d)\sigmainf^2}{2m} + \left( \frac{1}{2\tau} + \frac{1}{2\eta\gamma} + \frac{2\hrho}{\eta} \right) \eh^2.
\label{errbd3}
\end{align}
By \eqref{redvar-SGD}\eqref{errbd2}\eqref{subgbd}\eqref{errbd3},
the result follows.\qed
\end{proof}
\begin{corollary}
Under the setting of Theorem~\ref{thm: case1mini}, let
\begin{align*}
    \eta = \frac{1}{L},\; K = \left\lceil \frac{\Delta L}{\epsilon^2} \right\rceil,\; m = \left\lceil \frac{c(\log d) \sigma_\infty^2}{\epsilon^2} \right\rceil,\; \gamma = \frac{1}{2},\; \tau = \frac{1}{L},\; \eh = \frac{\epsilon}{L}, \; t = \frac{2\eta}{\hrho}.
\end{align*}
Then by \eqref{errbd4} we have
\begin{align*}
\bE[ {\rm dist}_{\| \cdot \|_\infty}(0, \partial(f+\delta_X)(\x^{Y+1})) ] \le ( 4\sqrt{3+1/\hrho + 2 \hrho} + 2 \sqrt{4\hrho^2 + 6\hrho + 2} + 2 )\epsilon.
\end{align*}
The sample complexity of the algorithm is
\begin{align*}
    Km = \left\lceil \frac{\Delta L}{\epsilon^2} \right\rceil \left\lceil \frac{c(\log d) \sigma_\infty^2}{\epsilon^2} \right\rceil = \sO \left(\frac{\Delta L (\log d) \sigma_\infty^2}{\epsilon^4} \right).
\end{align*}
\end{corollary}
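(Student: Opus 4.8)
The statement is a direct specialization of Theorem~\ref{thm: case1mini}, so the plan is simply to substitute the prescribed parameters into \eqref{errbd4} and keep track of the constants. First I would check that the hypotheses of Theorem~\ref{thm: case1mini} hold for this choice: with $\eta=1/L$ and $\gamma=1/2$ one gets $1/\eta - L/2 - \gamma/(2\eta) = L - L/2 - L/4 = L/4 > 0$, while $t = 2\eta/\hrho$, $\tau = 1/L > 0$ and $\gamma = 1/2 > 0$ hold by construction; hence \eqref{errbd4} is applicable.

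The substitution $\eta = 1/L$ immediately gives the simplifications $L + 1/\eta = 2L$, $1/\eta - L/2 - \gamma/(2\eta) = L/4$, $\hrho/(8\eta) = \hrho L/8$, and $\eh/\eta = \epsilon$. The key computational step is to bound the common quantity
\[
N \;\triangleq\; \frac{\Delta}{K} + \frac{(\tau+t)c(\log d)\sigma_\infty^2}{2m} + \Big( \frac{1}{2\tau} + \frac{1}{2\eta\gamma} + \frac{2\hrho}{\eta} \Big)\eh^2
\]
appearing under both square roots in \eqref{errbd4}. Using $K \ge \Delta L/\epsilon^2$ and $m \ge c(\log d)\sigma_\infty^2/\epsilon^2$ we get $\Delta/K \le \epsilon^2/L$ and $c(\log d)\sigma_\infty^2/m \le \epsilon^2$; plugging in $\tau = 1/L$, $t = 2/(L\hrho)$, $\gamma = 1/2$, $\eh = \epsilon/L$ and collecting terms yields $N \le \frac{\epsilon^2}{L}\big(3 + 1/\hrho + 2\hrho\big)$.

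It then remains to insert this bound on $N$ into the four summands of \eqref{errbd4}. The first summand becomes $2L\sqrt{4N/L} = 4\sqrt{LN} \le 4\sqrt{3+1/\hrho+2\hrho}\,\epsilon$; the second equals $\sqrt{c(\log d)\sigma_\infty^2/m} \le \epsilon$; the third equals $\hrho L\sqrt{8N/(\hrho L)} = 2\sqrt{2}\sqrt{\hrho L N} \le 2\sqrt{2}\sqrt{2\hrho^2+3\hrho+1}\,\epsilon = 2\sqrt{4\hrho^2+6\hrho+2}\,\epsilon$ (using the identity $2\sqrt{2}\,\sqrt{2\hrho^2+3\hrho+1} = 2\sqrt{4\hrho^2+6\hrho+2}$ together with $\hrho L N \le \epsilon^2(2\hrho^2+3\hrho+1)$); and the fourth equals $\eh/\eta = \epsilon$. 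Summing the four bounds gives the claimed constant $\big(4\sqrt{3+1/\hrho+2\hrho} + 2\sqrt{4\hrho^2+6\hrho+2} + 2\big)$ times $\epsilon$. For the sample complexity, $Km = \lceil \Delta L/\epsilon^2 \rceil \lceil c(\log d)\sigma_\infty^2/\epsilon^2 \rceil$, and since for $\epsilon$ small each ceiling is at most twice its argument, $Km = \sO\big(\Delta L (\log d)\sigma_\infty^2/\epsilon^4\big)$.

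I do not anticipate a genuine obstacle: the proof is a mechanical substitution into \eqref{errbd4}. The only points requiring care are the arithmetic of combining the constants (in particular verifying the algebraic identity that produces the $\sqrt{4\hrho^2+6\hrho+2}$ term) and observing that the ceiling operations in $K$ and $m$ contribute only a constant factor, so they are absorbed into the $\sO(\cdot)$.
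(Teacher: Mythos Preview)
Your proposal is correct and follows exactly the intended route: the paper provides no separate proof for this corollary, so the argument is precisely the direct substitution of the prescribed parameters into \eqref{errbd4} that you carry out. Your arithmetic checks out, including the verification of the hypothesis $1/\eta - L/2 - \gamma/(2\eta) = L/4 > 0$ and the derivation of the constant via $N \le \epsilon^2(3 + 1/\hrho + 2\hrho)/L$.
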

The next theorem reveals the convergence rate by applying variance reduction in Case 1. The proof is similar to Theorem~\ref{thm: case1mini} and relegated to Appendix~\ref{app: proof}.
\begin{theorem}\label{thm: case1svrg}
For Algorithm~\ref{alg: DI-SGD} with \eqref{ineq: proj}, consider case 1 \eqref{setting1} and estimating $G^k$ using variance reduction \eqref{def: Gk-SGD}\eqref{mk}. Then the following holds for $1/\eta - L/2 - \gamma/(2\eta) - \frac{4(\tau + t)L^2q^2}{m} > 0$, $t = 2\eta/\hrho$, $\tau > 0$, $\gamma > 0$.
\begin{align}
\notag
    & \bE[ {\rm dist}_{\| \cdot \|_\infty}(0, \partial(f+\delta_X)(\x^{Y+1})) ] \\
    \notag
    & \le \left( L + \frac{1}{\eta} + \sqrt{\frac{8L^2q^2}{m}} \right) \sqrt{\frac{\frac{\Delta}{K} + \frac{(\tau + t)c(\log d)\sigmainf^2}{m_1} + \left( \frac{1}{2\tau} + \frac{1}{2\eta\gamma} + \frac{2\hrho}{\eta} \right) \eh^2}{\frac{1}{\eta} - \frac{L}{2} - \frac{\gamma}{2\eta} - \frac{4(\tau + t)L^2q^2}{m} }}  \\ 
\label{errbd8}
    & + \sqrt{ \frac{2c(\log d)\sigma_\infty^2}{m_1} } + \frac{\hrho}{\eta}\sqrt{\frac{\frac{\Delta}{K} + \frac{(\tau + t)c(\log d)\sigmainf^2}{m_1} + \left( \frac{1}{2\tau} + \frac{1}{2\eta\gamma} + \frac{2\hrho}{\eta} \right) \eh^2}{\hrho/(8\eta)}} + \frac{\eh}{\eta}.
\end{align}
\end{theorem}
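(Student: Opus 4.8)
The plan is to run the argument of Theorem~\ref{thm: case1mini} essentially verbatim, changing only the estimate used for the gradient error: in place of the minibatch bound \eqref{redvar-SGD} we invoke the variance-reduction bound \eqref{redvar-SVRG2} of Lemma~\ref{lm: svrgbd}. The new feature is that the right-hand side of \eqref{redvar-SVRG2} carries an extra term proportional to $\bE[\|\Delta\x^{Y+1}\|^2]$, and the whole proof amounts to absorbing it correctly. Concretely, I would first record the two consequences of the Case-1 choice \eqref{setting1} already derived inside the proof of Theorem~\ref{thm: case1mini}: the subgradient bound $\|\xi^{k+1}\|_\infty\le\hrho\|\y^{k+1}-\x^k\|_1$ from \eqref{subgbd}, and, from $\phi(0)=0$ together with \eqref{eq: proj2} and convexity, $(\xi^{k+1})^T(\y^{k+1}-\x^k)\ge\tfrac{\hrho}{2}\|\y^{k+1}-\x^k\|_1^2$ from \eqref{ineq: phi}. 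Substituting these into the general bound \eqref{errbd1} of Lemma~\ref{lm: errbd1}, choosing $t=2\eta/\hrho$ so that $\tfrac1{2t}=\tfrac{\hrho}{4\eta}$, and applying Young's inequality in the form $\tfrac{\eh}{\eta}\|\xi^{Y+1}\|_\infty\le\tfrac{2\hrho}{\eta}\eh^2+\tfrac1{8\eta\hrho}\|\xi^{Y+1}\|_\infty^2\le\tfrac{2\hrho}{\eta}\eh^2+\tfrac{\hrho}{8\eta}\|\y^{Y+1}-\x^Y\|_1^2$, one reproduces the penultimate estimate in the proof of Theorem~\ref{thm: case1mini}:
\begin{align*}
& \Big(\tfrac1\eta-\tfrac L2-\tfrac{\gamma}{2\eta}\Big)\bE[\|\Delta\x^{Y+1}\|^2]+\tfrac{\hrho}{8\eta}\bE[\|\y^{Y+1}-\x^Y\|_1^2] \\
& \qquad \le\tfrac{\Delta}{K}+\tfrac{\tau+t}{2}\bE[\|g^Y-G^Y\|_\infty^2]+\Big(\tfrac1{2\tau}+\tfrac1{2\eta\gamma}+\tfrac{2\hrho}{\eta}\Big)\eh^2.
\end{align*}

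The single new step is to bound $\tfrac{\tau+t}{2}\bE[\|g^Y-G^Y\|_\infty^2]$ by \eqref{redvar-SVRG2} and \eqref{mk}, namely by $\tfrac{4(\tau+t)L^2q^2}{m}\bE[\|\Delta\x^{Y+1}\|^2]+\tfrac{(\tau+t)c(\log d)\sigma_\infty^2}{m_1}$, and then to move the $\bE[\|\Delta\x^{Y+1}\|^2]$ contribution to the left. This turns the leading coefficient into $\tfrac1\eta-\tfrac L2-\tfrac{\gamma}{2\eta}-\tfrac{4(\tau+t)L^2q^2}{m}$ — positive precisely under the hypothesis of the theorem — and produces the variance-reduction counterpart of \eqref{errbd3}, whose right-hand side is $N:=\tfrac{\Delta}{K}+\tfrac{(\tau+t)c(\log d)\sigma_\infty^2}{m_1}+\big(\tfrac1{2\tau}+\tfrac1{2\eta\gamma}+\tfrac{2\hrho}{\eta}\big)\eh^2$. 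Hence $\bE[\|\Delta\x^{Y+1}\|^2]\le N/(\tfrac1\eta-\tfrac L2-\tfrac{\gamma}{2\eta}-\tfrac{4(\tau+t)L^2q^2}{m})$ and $\bE[\|\y^{Y+1}-\x^Y\|_1^2]\le 8\eta N/\hrho$.

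Finally I would feed these into the residual bound \eqref{errbd2}, using $\bE[\|\xi^{Y+1}\|_\infty]\le\hrho\,\bE[\|\y^{Y+1}-\x^Y\|_1]\le\hrho\sqrt{\bE[\|\y^{Y+1}-\x^Y\|_1^2]}$ from \eqref{subgbd} and Jensen, and — this is where the extra factor $\sqrt{8L^2q^2/m}$ in \eqref{errbd8} comes from — the subadditivity split $\sqrt{\bE[\|g^Y-G^Y\|_\infty^2]}\le\sqrt{8L^2q^2/m}\,\sqrt{\bE[\|\Delta\x^{Y+1}\|^2]}+\sqrt{2c(\log d)\sigma_\infty^2/m_1}$ coming again from \eqref{redvar-SVRG2}. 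Plugging in the two bounds on $\bE[\|\Delta\x^{Y+1}\|^2]$ and $\bE[\|\y^{Y+1}-\x^Y\|_1^2]$ in terms of $N$ then gives exactly \eqref{errbd8}. There is no genuinely hard estimate here; the only thing needing care is the bookkeeping — making sure the $\bE[\|\Delta\x^{Y+1}\|^2]$ term generated by Lemma~\ref{lm: svrgbd} is absorbed into the leading coefficient of the energy inequality while simultaneously contributing the $\sqrt{8L^2q^2/m}$ factor in \eqref{errbd2}, and verifying that the resulting coefficient stays positive, which is exactly the standing assumption of the theorem.
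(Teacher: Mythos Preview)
Your proposal is correct and follows the paper's proof essentially line for line: reproduce the Case-1 energy inequality from the proof of Theorem~\ref{thm: case1mini} up to the point where $\bE[\|g^Y-G^Y\|_\infty^2]$ is bounded, replace \eqref{redvar-SGD} by \eqref{redvar-SVRG2}, absorb the resulting $\bE[\|\Delta\x^{Y+1}\|^2]$ term into the left-hand coefficient to obtain \eqref{errbd7}, and then substitute into \eqref{errbd2} using \eqref{subgbd} and the subadditivity split of $\sqrt{\bE[\|g^Y-G^Y\|_\infty^2]}$. The bookkeeping you flag (positivity of the modified coefficient, the extra $\sqrt{8L^2q^2/m}$ factor) is exactly what the paper does.
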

\begin{corollary}
Under the setting of Theorem~\ref{thm: case1svrg}, let
\begin{align*}
    & \eta = \frac{1}{L},\; K = \left\lceil \frac{\Delta L}{\epsilon^2} \right\rceil,\; m_1 = \left\lceil \frac{c(\log d) \sigma_\infty^2}{\epsilon^2} \right\rceil,\; m = q^2 = \left\lceil \left( \frac{c(\log d)\sigma^2_\infty}{\epsilon^2} \right)^{1/3} \right\rceil^2,\; \\
    & \gamma = \frac{1}{2},\; \tau = t = \frac{1}{64L},\;
    \eh = \frac{\epsilon}{L},\; \hrho = 128.
\end{align*}
Then by \eqref{errbd8} we have
\begin{align*}
\bE[ {\rm dist}_{\| \cdot \|_\infty}(0, \partial(f+\delta_X)(\x^{Y+1})) ] {\le} C\epsilon,
\end{align*}
where $C$ is an absolute constant independent of any parameter of the algorithm or the problem data. The sample complexity is upper bounded by
\begin{align*}
   Km + \left\lceil \frac{K}{q} \right\rceil m_1 & \le  2K (m + q^2)  = 4 \left\lceil \frac{\Delta L}{\epsilon^2} \right\rceil \left\lceil \left( \frac{c(\log d)\sigma^2_\infty}{\epsilon^2} \right)^{1/3} \right\rceil^2 \\
   & = \sO\left( \frac{ \Delta L(c(\log d)\sigma_\infty^2)^{2/3}}{\epsilon^{10/3}} \right).
\end{align*}
\end{corollary}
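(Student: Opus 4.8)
The plan is to substitute the prescribed parameters directly into the bound \eqref{errbd8} and check, term by term, that every problem- and dimension-dependent quantity cancels, leaving an absolute-constant multiple of $\epsilon$. First I would verify the standing condition $1/\eta - L/2 - \gamma/(2\eta) - 4(\tau+t)L^2q^2/m > 0$ under which \eqref{errbd8} is valid. With $\eta = 1/L$, $\gamma = 1/2$, $\tau = t = 1/(64L)$, and $m = q^2$, one computes $1/\eta = L$, $\gamma/(2\eta) = L/4$, and $4(\tau+t)L^2q^2/m = 4\cdot(1/(32L))\cdot L^2 = L/8$, so the left-hand side equals $L - L/2 - L/4 - L/8 = L/8 > 0$. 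Thus \eqref{errbd8} applies and its first radicand has denominator exactly $L/8$.

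Next I would bound the common numerator $N \triangleq \Delta/K + (\tau+t)c(\log d)\sigmainf^2/m_1 + (1/(2\tau) + 1/(2\eta\gamma) + 2\hrho/\eta)\eh^2$ that appears in two of the four terms. The choices $K \ge \Delta L/\epsilon^2$, $m_1 \ge c(\log d)\sigmainf^2/\epsilon^2$, and $\eh^2 = \epsilon^2/L^2$ give $\Delta/K \le \epsilon^2/L$, $c(\log d)\sigmainf^2/m_1 \le \epsilon^2$ (so the middle term is at most $\epsilon^2/(32L)$ since $\tau+t = 1/(32L)$), and $1/(2\tau) + 1/(2\eta\gamma) + 2\hrho/\eta = 32L + L + 256L = 289L$, making the last term $289\epsilon^2/L$. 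Hence $N \le (290 + 1/32)\epsilon^2/L$, a numerical constant times $\epsilon^2/L$.

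With these estimates I would evaluate the four summands of \eqref{errbd8} in turn. The prefactor of the first square root is $L + 1/\eta + \sqrt{8L^2q^2/m} = 2L + 2\sqrt{2}L$, while its radicand is $N/(L/8) \le 8(290+1/32)\epsilon^2/L^2$; multiplying, the $L$ factors cancel and the first term is an absolute constant times $\epsilon$. The second term $\sqrt{2c(\log d)\sigmainf^2/m_1} \le \sqrt{2}\,\epsilon$. For the third term $\hrho/\eta = 128L$, and its radicand is $N/(\hrho/(8\eta)) = 8\eta N/\hrho = N/(16L) \le (290+1/32)\epsilon^2/(16L^2)$, so the $L$'s again cancel and this term is a constant times $\epsilon$. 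The fourth term equals $\eh/\eta = \epsilon$. Summing the four bounds yields $\bE[{\rm dist}_{\|\cdot\|_\infty}(0,\partial(f+\delta_X)(\xbf^{Y+1}))] \le C\epsilon$ with $C$ an absolute constant, as claimed.

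Finally I would establish the sample complexity. Each non-anchor step uses $m$ fresh realizations and each of the $\lceil K/q\rceil$ anchor steps uses $m_1$, so the total is at most $Km + \lceil K/q\rceil m_1$. Writing $A \triangleq c(\log d)\sigmainf^2/\epsilon^2$, the ceilings give $m_1 = \lceil A\rceil \le 2A \le 2q^3$ (using $q = \lceil A^{1/3}\rceil \ge A^{1/3}$) and $m = q^2$, whence $m_1 \le 2qm$; combined with $\lceil K/q\rceil \le K/q + 1$ this shows $\lceil K/q\rceil m_1 = \sO(Km)$, so $Km + \lceil K/q\rceil m_1 \le 2K(m+q^2) = 4\lceil \Delta L/\epsilon^2\rceil\lceil A^{1/3}\rceil^2$. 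Since $\lceil\Delta L/\epsilon^2\rceil = \sO(\Delta L/\epsilon^2)$ and $\lceil A^{1/3}\rceil^2 = \sO(A^{2/3}) = \sO((c(\log d)\sigmainf^2)^{2/3}\epsilon^{-4/3})$, the product is $\sO(\Delta L(c(\log d)\sigmainf^2)^{2/3}/\epsilon^{10/3})$. The only genuinely delicate point is the coupled tuning of $\hrho$, $\tau$, $t$, $\gamma$: they must keep the positivity denominator at $L/8$ (bounded away from zero) while ensuring the $\hrho$-weighted third term and the $\eh^2$-weighted contributions remain $\sO(\epsilon)$; once these constants are verified, the remainder is routine bookkeeping.
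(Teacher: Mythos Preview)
Your proposal is correct and follows precisely the approach the paper intends: the corollary is stated without a separate proof, as it amounts to direct substitution of the prescribed constants into the bound \eqref{errbd8}, and your term-by-term bookkeeping carries this out faithfully. One small omission worth noting is that Theorem~\ref{thm: case1svrg} also requires $t = 2\eta/\hrho$, which you should state is satisfied since $2\eta/\hrho = 2/(128L) = 1/(64L) = t$; with that check added, the argument is complete.
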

\subsection{Analysis for Case 2}
The main result is given as follows.
\begin{theorem}\label{thm: case2mini} For Algorithm~\ref{alg: DI-SGD} with \eqref{ineq: proj}, consider case 2 \eqref{setting2} and estimating $G^k$ using minibatch \eqref{def: Gk-SGD}. Then
\begin{align}
\notag
    & \bE[ {\rm dist}_{\| \cdot \|_\infty}(0, \partial(f+\delta_X)(\x^{Y+1})) ] \\
\notag
    & \le \left( L + \frac{1}{\eta} \right) \sqrt{\frac{\frac{\Delta}{K} + \frac{(\tau + t)c(\log d)\sigmainf^2}{2m} + \left( \frac{1}{2\tau} + \frac{1}{2\eta\gamma} \right) \eh^2 + \frac{\psi^2}{2t}}{1/\eta - L/2 - \gamma/(2\eta)}} + \sqrt{ \frac{c(\log d)\sigmainf^2}{m} } \\
\label{errbd6}
    & + \frac{1}{\eta} \cdot \frac{\frac{\Delta}{K} + \frac{(\tau + t)c(\log d)\sigmainf^2}{2m} + \left( \frac{1}{2\tau} + \frac{1}{2\eta\gamma} \right) \eh^2 + \frac{\psi^2}{2t}}{(\psi - \eh)/\eta} + \frac{\eh}{\eta}.
\end{align}
holds when $1/\eta - L/2 - \gamma/(2\eta) > 0$, $t >0$, $\tau > 0$, $\gamma > 0$, $\psi > \eh$.
\end{theorem}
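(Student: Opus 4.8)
The plan is to run the same machinery as in the proof of Theorem~\ref{thm: case1mini}, starting from the master inequality \eqref{errbd1} of Lemma~\ref{lm: errbd1}, but to replace the role played by Lemma~\ref{lm: l1subdiff} (which in Case~1 controlled $\xi^{k+1}$ through the subdifferential of $\tfrac{\hrho}{2}\|\cdot\|_1^2$) by the normal-cone geometry of the $\ell_1$ ball $\bar X=\{\z:\|\z\|_1\le\psi\}$. By \eqref{eq: proj2} and \eqref{case2phi}, $\xi^{k+1}\in\partial\phi(\y^{k+1}-\x^k)=\sN_{\bar X}(\y^{k+1}-\x^k)$, and I would extract two facts from this: (i) since this subdifferential is nonempty, $\y^{k+1}-\x^k\in\mathrm{dom}\,\phi=\bar X$, so $\|\y^{k+1}-\x^k\|_1\le\psi$; and (ii) the support-function identity $\sup_{\z\in\bar X}\langle\xi^{k+1},\z\rangle=\psi\|\xi^{k+1}\|_\infty$ together with the normal-cone inequality $\langle\xi^{k+1},\z-(\y^{k+1}-\x^k)\rangle\le 0$ for all $\z\in\bar X$ forces $\langle\xi^{k+1},\y^{k+1}-\x^k\rangle\ge\psi\|\xi^{k+1}\|_\infty$, while $\langle\xi^{k+1},\y^{k+1}-\x^k\rangle\le\|\xi^{k+1}\|_\infty\|\y^{k+1}-\x^k\|_1\le\psi\|\xi^{k+1}\|_\infty$ is automatic; hence $\langle\xi^{k+1},\y^{k+1}-\x^k\rangle=\psi\|\xi^{k+1}\|_\infty$.

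Using (ii), the term $\frac{1}{\eta}\bE[(\xi^{Y+1})^T(\y^{Y+1}-\x^Y)]$ on the left of \eqref{errbd1} becomes $\frac{\psi}{\eta}\bE[\|\xi^{Y+1}\|_\infty]$, which absorbs the $\frac{\eh}{\eta}\bE[\|\xi^{Y+1}\|_\infty]$ term on the right to leave $\frac{\psi-\eh}{\eta}\bE[\|\xi^{Y+1}\|_\infty]$ on the left. Bounding $\frac{1}{2t}\bE[\|\y^{Y+1}-\x^Y\|_1^2]\le\frac{\psi^2}{2t}$ via (i) and $\bE[\|g^Y-G^Y\|_\infty^2]\le c(\log d)\sigmainf^2/m$ via \eqref{redvar-SGD}, I arrive at
\begin{align*}
\left(\frac{1}{\eta}-\frac{L}{2}-\frac{\gamma}{2\eta}\right)\bE[\|\Delta\x^{Y+1}\|^2]+\frac{\psi-\eh}{\eta}\bE[\|\xi^{Y+1}\|_\infty]\le A,
\end{align*}
where $A\triangleq\frac{\Delta}{K}+\frac{(\tau+t)c(\log d)\sigmainf^2}{2m}+\left(\frac{1}{2\tau}+\frac{1}{2\eta\gamma}\right)\eh^2+\frac{\psi^2}{2t}$.

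Since $1/\eta-L/2-\gamma/(2\eta)>0$ and $\psi>\eh$, both summands on the left are nonnegative, so each may be bounded on its own: $\sqrt{\bE[\|\Delta\x^{Y+1}\|^2]}\le\sqrt{A/(1/\eta-L/2-\gamma/(2\eta))}$ and $\bE[\|\xi^{Y+1}\|_\infty]\le\eta A/(\psi-\eh)$. Substituting both of these, together with \eqref{redvar-SGD} for the $\sqrt{\bE[\|g^Y-G^Y\|_\infty^2]}$ term, into the a-priori residual bound \eqref{errbd2} then reproduces exactly \eqref{errbd6}.

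I expect the only genuinely delicate points to be: (a) justifying $\y^{k+1}-\x^k\in\bar X$ — immediate, since \eqref{eq: proj2} asserts $\partial\phi(\y^{k+1}-\x^k)\ne\emptyset$ and $\mathrm{dom}\,\phi=\bar X$; and (b) the need for $\psi>\eh$, which is precisely what keeps the coefficient $\frac{\psi-\eh}{\eta}$ of $\bE[\|\xi^{Y+1}\|_\infty]$ strictly positive so that the separation in the previous paragraph is legitimate — this is what forces the hypothesis $\psi>\eh$ in the statement. The remaining manipulations are the same bookkeeping already performed for Theorem~\ref{thm: case1mini}.
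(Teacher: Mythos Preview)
Your proposal is correct and follows essentially the same route as the paper: start from Lemma~\ref{lm: errbd1}, use the normal-cone geometry of the $\ell_1$ ball to turn the left-hand term $\frac{1}{\eta}\bE[(\xi^{Y+1})^T(\y^{Y+1}-\x^Y)]$ into a positive multiple of (a surrogate for) $\|\xi^{Y+1}\|_\infty$, absorb the $\frac{\eh}{\eta}\bE[\|\xi^{Y+1}\|_\infty]$ term on the right, bound $\|\y^{Y+1}-\x^Y\|_1\le\psi$, apply \eqref{redvar-SGD}, and finish via \eqref{errbd2}. The only difference is in the bookkeeping of the normal cone: the paper introduces an explicit KKT multiplier $\hrho_k\ge 0$ with $\xi^{k+1}\in\hrho_k\,\partial\|\cdot\|_1(\y^{k+1}-\x^k)$ and complementarity, deduces $\|\xi^{k+1}\|_\infty\le\hrho_k$ and $(\xi^{k+1})^T(\y^{k+1}-\x^k)\ge\hrho_k\|\y^{k+1}-\x^k\|_1=\psi\hrho_k$, and then tracks $\bE[\hrho_Y]$; you instead work directly with $\|\xi^{k+1}\|_\infty$ via the support-function identity $(\xi^{k+1})^T(\y^{k+1}-\x^k)=\psi\|\xi^{k+1}\|_\infty$. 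Your version is marginally more direct (no auxiliary multiplier), but the two are equivalent and produce the identical bound \eqref{errbd6}.
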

\begin{proof}
Note that \eqref{eq: proj2} is equivalent to
\begin{align}
    \y^{k+1} \in \mbox{arg}\min_\y -(\xi^{k+1})^T \y \quad \mbox{s.t. } \quad \| \y - \x^k \|_1 \le \psi.
\end{align}
Therefore, $\exists \hrho_k$ such that
\begin{align}\label{setting2}
\xi^{k+1} \in \hrho_k \partial (\| \cdot - \x^k \|_1)(\y^{k+1}), \quad 0 \le \hrho_k \perp \| \y^{k+1} - \x^k \|_1 - \psi \le 0.
\end{align}
Then
\begin{align*}
    \xi^{k+1} \in \{ \hrho_k v \mid v_i \in [-1,1] \}.
\end{align*}
Therefore, 
\begin{align}\label{subgbd2}
    \| \xi^{k+1} \|_\infty \le \hrho_k.
\end{align}
Moreover, by \eqref{setting2},
\begin{align}\label{ineq: nn3}
    (\xi^{k+1})^T(\y^{k+1} - \x^k) \ge \hrho_k \| \y^{k+1} - \x^k \|_1 - \hrho_k \| \x^k - \x^k \|_1 = \hrho_k \| \y^{k+1} - \x^k \|_1.
\end{align}

Consider the minibatch sampling \eqref{def: Gk-SGD}. Then \eqref{errbd1}\eqref{setting2}\eqref{subgbd2}\eqref{ineq: nn3} lead to
\begin{align}
\notag
& \left( \frac{1}{\eta} - \frac{L}{2} - \frac{\gamma}{2 \eta} \right) \bE [ \| \Delta \xbf^{Y+1} \|^2 ] + \frac{1}{\eta} \bE[ \hrho_Y \| \ybf^{Y+1} - \xbf^Y \|_1 ] \\\notag
& \le \frac{\Delta}{K} + \frac{\tau + t}{2} \bE[ \| g^Y - G^Y \|_\infty^2 ] + \left( \frac{1}{2\tau} + \frac{1}{2\eta\gamma} \right) \eh^2 + \frac{1}{2t} \bE [ \| \y^{Y+1} - \x^Y \|_1^2 ] \\
\notag
& + \frac{\eh}{ \eta } \bE[ \| \xi^{Y+1} \|_\infty ] \\ \notag
& \le \frac{\Delta}{K} + \frac{\tau + t}{2} \bE[ \| g^Y - G^Y \|_\infty^2 ] + \left( \frac{1}{2\tau} + \frac{1}{2\eta\gamma} \right) \eh^2 + \frac{\psi^2}{2t} + \frac{\eh }{ \eta } \bE[\hrho_Y] \\ \notag
& \implies \left( \frac{1}{\eta} - \frac{L}{2} - \frac{\gamma}{2 \eta} \right) \bE [ \| \Delta \xbf^{Y+1} \|^2 ] + \frac{\psi - \eh}{\eta} \bE[ \hrho_Y ] \\\notag
& \le \frac{\Delta}{K} +  \frac{\tau + t}{2}  \bE[ \| g^Y - G^Y \|_\infty^2 ] + \left( \frac{1}{2\tau} + \frac{1}{2\eta\gamma} \right) \eh^2 + \frac{\psi^2}{2t} \\
& \overset{\eqref{redvar-SGD}}{\le} \frac{\Delta}{K} + \frac{(\tau + t)c(\log d)\sigmainf^2}{2m} + \left( \frac{1}{2\tau} + \frac{1}{2\eta\gamma} \right) \eh^2 + \frac{\psi^2}{2t}.
\label{errbd5}
\end{align}
By \eqref{redvar-SGD}\eqref{errbd2}\eqref{subgbd2}\eqref{errbd5}, the result follows. \qed
\end{proof}
\begin{corollary}
    Under the settings of Theorem~\ref{thm: case2mini}, let
\begin{align*}
    & \eta = \frac{1}{L},\; K = \left\lceil \frac{\Delta L}{\epsilon^2} \right\rceil,\; m = \left\lceil \frac{c(\log d) \sigma_\infty^2}{\epsilon^2} \right\rceil,\; \gamma = \frac{1}{2},\; \tau = \frac{1}{L},\; \eh = \frac{\epsilon}{L},\; \psi = 2 \eh,\; \\
    & t = \frac{1}{L}.
\end{align*}
Then by \eqref{errbd6} we have
\begin{align*}
\bE[ {\rm dist}_{\| \cdot \|_\infty}(0, \partial(f+\delta_X)(\x^{Y+1})) ] {\le} C \epsilon,
\end{align*}
where $C$ is an absolute constant. The sample complexity is
\begin{align*}
    Km = \sO\left( \frac{ \Delta L c (\log d) \sigma_\infty^2 }{\epsilon^4} \right).
\end{align*}
\end{corollary}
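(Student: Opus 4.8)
The plan is to obtain the corollary purely by substituting the prescribed parameter values into the bound \eqref{errbd6} of Theorem~\ref{thm: case2mini} and then counting stochastic gradient evaluations. First I would verify that Theorem~\ref{thm: case2mini} is applicable to this choice: with $\eta = 1/L$ and $\gamma = 1/2$ one has $1/\eta - L/2 - \gamma/(2\eta) = L - L/2 - L/4 = L/4 > 0$, while $t = \tau = 1/L > 0$, $\gamma > 0$, and $\psi = 2\eh > \eh$ since $\eh = \epsilon/L > 0$; hence all hypotheses hold and \eqref{errbd6} may be invoked.

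Next I would bound the quantity that appears (twice) in the numerators of \eqref{errbd6}, namely $N \triangleq \Delta/K + (\tau+t)c(\log d)\sigmainf^2/(2m) + (1/(2\tau)+1/(2\eta\gamma))\eh^2 + \psi^2/(2t)$. Using $K \ge \Delta L/\epsilon^2$ gives $\Delta/K \le \epsilon^2/L$; using $m \ge c(\log d)\sigmainf^2/\epsilon^2$ together with $\tau + t = 2/L$ bounds the second term by $\epsilon^2/L$; the identities $1/(2\tau) = L/2$, $1/(2\eta\gamma) = L$, $\eh^2 = \epsilon^2/L^2$ make the third term $\tfrac{3}{2}\epsilon^2/L$; and $\psi^2/(2t) = 2L\eh^2 = 2\epsilon^2/L$. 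Hence $N \le \tfrac{11}{2}\epsilon^2/L = \sO(\epsilon^2/L)$. Feeding this into the four summands of \eqref{errbd6}: the first is $(L + 1/\eta)\sqrt{N/(L/4)} = 4\sqrt{LN} = \sO(\epsilon)$; the second is $\sqrt{c(\log d)\sigmainf^2/m} \le \epsilon$; the fourth is $\eh/\eta = \epsilon$. The third summand, $\tfrac1\eta \cdot N/((\psi-\eh)/\eta) = N/(\psi-\eh) = N/\eh = LN/\epsilon$, is the only one that scales \emph{linearly} in $N$ rather than as $\sqrt{N}$, but since $N = \sO(\epsilon^2/L)$ it too is $\sO(\epsilon)$. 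Summing the four pieces yields $\bE[{\rm dist}_{\|\cdot\|_\infty}(0,\partial(f+\delta_X)(\x^{Y+1}))] \le C\epsilon$ with $C$ an absolute constant (one may take $C = 4\sqrt{11/2} + 15/2$, though the exact value is immaterial).

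For the sample complexity: Algorithm~\ref{alg: DI-SGD} with the minibatch estimator \eqref{def: Gk-SGD} draws $m$ fresh realizations of $\zeta$ at each of the $K$ iterations, so the total number of SFO calls is $Km = \lceil \Delta L/\epsilon^2 \rceil \lceil c(\log d)\sigmainf^2/\epsilon^2 \rceil = \sO(\Delta L c(\log d)\sigmainf^2/\epsilon^4)$, the ceilings affecting only constants.

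I do not expect a genuine obstacle here; the argument is bookkeeping on top of Theorem~\ref{thm: case2mini}. The one point that deserves attention is \emph{why} the parameters are tuned this way: in Case~2 the proximal step supplies only the one-sided inequality $(\xi^{k+1})^T(\y^{k+1}-\x^k) \ge \hrho_k\|\y^{k+1}-\x^k\|_1$ rather than a quadratic lower bound, so \eqref{errbd6} carries the term $N/(\psi-\eh)$ that is linear in $N$; keeping it $\sO(\epsilon)$ forces $\psi - \eh = \Theta(\eh) = \Theta(\epsilon/L)$ together with $N = \sO(\epsilon^2/L)$, which is exactly what the choices $\eh = \epsilon/L$, $\psi = 2\eh$, $K = \lceil \Delta L/\epsilon^2 \rceil$, and $m = \lceil c(\log d)\sigmainf^2/\epsilon^2 \rceil$ accomplish.
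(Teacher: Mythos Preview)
Your proposal is correct and is precisely the intended argument: the paper itself offers no proof for this corollary, as it follows by direct substitution of the stated parameter choices into \eqref{errbd6}, which is exactly what you carry out. Your arithmetic is accurate (including the explicit constant $C = 4\sqrt{11/2} + 15/2$), and your closing remark on why $\psi - \eh = \Theta(\epsilon/L)$ is forced by the linear-in-$N$ term is a helpful piece of intuition that goes beyond what the paper records.
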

Apply variance reduction to case 2, and the following statements hold.
\begin{theorem}\label{thm: case2svrg}
For Algorithm~\ref{alg: DI-SGD} with \eqref{ineq: proj}, consider case 2 \eqref{setting2} and estimating $G^k$ using variance reduction \eqref{def: Gk-SGD}\eqref{mk}. Then the following holds when $1/\eta - L/2 - \gamma/(2\eta) - \frac{4(\tau + t)L^2q^2}{m} > 0$, $t > 0$, $\tau > 0$, $\gamma > 0$, $\psi> \eh$.
\begin{align}
\notag
    & \bE[ {\rm dist}_{\| \cdot \|_\infty}(0, \partial(f+\delta_X)(\x^{Y+1})) ] \\ 
    \notag
    & \le \left( L + \frac{1}{\eta} + \sqrt{\frac{8L^2q^2}{m}} \right) \sqrt{\frac{\frac{\Delta}{K} + \frac{(\tau + t)c(\log d)\sigmainf^2}{m_1} + \left( \frac{1}{2\tau} + \frac{1}{2\eta\gamma} \right) \eh^2 + \frac{\psi^2}{2t} }{\frac{1}{\eta} - \frac{L}{2} - \frac{\gamma}{2\eta} - \frac{4(\tau + t)L^2q^2}{m} }} \\
    \label{errbd10}
    & + \frac{1}{\psi - \eh} \left( \frac{\Delta}{K} + \frac{(\tau + t)c(\log d)\sigmainf^2}{m_1} + \left( \frac{1}{2\tau} + \frac{1}{2\eta\gamma} \right) \eh^2 + \frac{\psi^2}{2t} \right) + \frac{\eh}{\eta} \\
    \notag
    & + \sqrt{ \frac{2c(\log d)\sigma_\infty^2}{m_1} }.
\end{align}
\end{theorem}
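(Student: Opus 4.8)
The plan is to follow the proof of Theorem~\ref{thm: case2mini} essentially verbatim up to the point where the gradient error is estimated, and then substitute the variance‑reduction bound of Lemma~\ref{lm: svrgbd} in place of the minibatch bound \eqref{redvar-SGD}, absorbing the extra iterate‑dependent term into the left‑hand side. First I would reuse the Case~2 structural facts: since $\phi = \delta_{\bar X}$, the point $\ybf^{k+1}$ maximizes the linear functional $(\xi^{k+1})^T\cdot$ over the $\ell_1$‑ball of radius $\psi$ around $\x^k$, so there is a multiplier $\hrho_k \ge 0$ with $\xi^{k+1} \in \hrho_k\,\partial(\|\cdot-\x^k\|_1)(\ybf^{k+1})$ and $\hrho_k(\|\ybf^{k+1}-\x^k\|_1-\psi)=0$; this yields $\|\xi^{k+1}\|_\infty \le \hrho_k$, $(\xi^{k+1})^T(\ybf^{k+1}-\x^k) \ge \hrho_k\|\ybf^{k+1}-\x^k\|_1$, and $\|\ybf^{k+1}-\x^k\|_1 \le \psi$, exactly as in \eqref{subgbd2}--\eqref{ineq: nn3}.

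Inserting these into Lemma~\ref{lm: errbd1}, lower bounding the $\frac{1}{\eta}\bE[(\xi^{Y+1})^T(\ybf^{Y+1}-\x^Y)]$ term via \eqref{ineq: nn3}, bounding $\frac{1}{2t}\bE[\|\ybf^{Y+1}-\x^Y\|_1^2] \le \frac{\psi^2}{2t}$, and combining the $\frac{\eh}{\eta}\bE[\|\xi^{Y+1}\|_\infty]$ term with the $\hrho_Y\|\ybf^{Y+1}-\x^Y\|_1$ term through complementarity exactly as in Theorem~\ref{thm: case2mini}, gives the analogue of \eqref{errbd5},
\[
\Big(\tfrac{1}{\eta}-\tfrac{L}{2}-\tfrac{\gamma}{2\eta}\Big)\bE[\|\Delta\xbf^{Y+1}\|^2] + \tfrac{\psi-\eh}{\eta}\bE[\hrho_Y] \le \tfrac{\Delta}{K} + \tfrac{\tau+t}{2}\bE[\|g^Y-G^Y\|_\infty^2] + \Big(\tfrac{1}{2\tau}+\tfrac{1}{2\eta\gamma}\Big)\eh^2 + \tfrac{\psi^2}{2t}.
\]
The new ingredient is that $\bE[\|g^Y-G^Y\|_\infty^2]$ is now controlled by \eqref{redvar-SVRG2} rather than \eqref{redvar-SGD}: this replaces $\frac{\tau+t}{2}\bE[\|g^Y-G^Y\|_\infty^2]$ by $\frac{4(\tau+t)L^2q^2}{m}\bE[\|\Delta\xbf^{Y+1}\|^2] + \frac{(\tau+t)c(\log d)\sigma_\infty^2}{m_1}$. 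Moving the iterate term to the left — legitimate precisely because the hypothesis forces $\frac{1}{\eta}-\frac{L}{2}-\frac{\gamma}{2\eta}-\frac{4(\tau+t)L^2q^2}{m}>0$ — yields, writing $A \triangleq \frac{\Delta}{K}+\frac{(\tau+t)c(\log d)\sigma_\infty^2}{m_1}+\big(\frac{1}{2\tau}+\frac{1}{2\eta\gamma}\big)\eh^2+\frac{\psi^2}{2t}$,
\[
\Big(\tfrac{1}{\eta}-\tfrac{L}{2}-\tfrac{\gamma}{2\eta}-\tfrac{4(\tau+t)L^2q^2}{m}\Big)\bE[\|\Delta\xbf^{Y+1}\|^2] \le A, \qquad \tfrac{\psi-\eh}{\eta}\bE[\hrho_Y] \le A,
\]
using nonnegativity of both left‑hand summands.

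Finally I would substitute these two bounds into \eqref{errbd2}, bounding $\bE[\|\xi^{Y+1}\|_\infty]\le\bE[\hrho_Y]$ via \eqref{subgbd2} and applying \eqref{redvar-SVRG2} once more to split $\sqrt{\bE[\|g^Y-G^Y\|_\infty^2]}$ into $\sqrt{8L^2q^2/m}\,\sqrt{\bE[\|\Delta\xbf^{Y+1}\|^2]}+\sqrt{2c(\log d)\sigma_\infty^2/m_1}$, the first piece merging with the coefficient $L+1/\eta$. Collecting terms reproduces \eqref{errbd10}. The only place demanding real care — and the analogue of the one subtlety in Theorem~\ref{thm: case1svrg} — is the self‑referential use of \eqref{redvar-SVRG2}: because the variance bound itself contains $\bE[\|\Delta\xbf^{Y+1}\|^2]$, one must keep $\frac{1}{\eta}-\frac{L}{2}-\frac{\gamma}{2\eta}-\frac{4(\tau+t)L^2q^2}{m}$ strictly positive so the absorption step is valid, and track the common factor $\tau+t$ (which multiplies both the iterate term and the $1/m_1$ term) carefully when reassembling the estimate.
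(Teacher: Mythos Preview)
Your proposal is correct and follows essentially the same route as the paper's proof: derive the analogue of \eqref{errbd5} from Lemma~\ref{lm: errbd1} and the Case~2 structural facts \eqref{subgbd2}--\eqref{ineq: nn3}, replace the minibatch variance bound by \eqref{redvar-SVRG2}, absorb the $\frac{4(\tau+t)L^2q^2}{m}\bE[\|\Delta\xbf^{Y+1}\|^2]$ term into the left to obtain the counterpart of \eqref{errbd9}, and then plug into \eqref{errbd2} with a second application of \eqref{redvar-SVRG2}. The only remark is that the ``complementarity'' step you invoke is exactly what turns $\frac{1}{\eta}\bE[\hrho_Y\|\ybf^{Y+1}-\x^Y\|_1]-\frac{\eh}{\eta}\bE[\hrho_Y]$ into $\frac{\psi-\eh}{\eta}\bE[\hrho_Y]$, which the paper uses implicitly as well.
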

\begin{proof}
    See Appendix~\ref{app: proof}.
\end{proof}
\begin{corollary}
Under the settings of Theorem~\ref{thm: case2svrg}, let
\begin{align*}
    & \eta = \frac{1}{L},\; K = \left\lceil \frac{\Delta L}{\epsilon^2} \right\rceil,\; m_1 = \left\lceil \frac{c(\log d) \sigma_\infty^2}{\epsilon^2} \right\rceil,\; m = q^2 = m_1^{2/3},\; \gamma = \frac{1}{2},\; \\
    & \tau = t = \frac{1}{64L},\; \eh = \frac{\epsilon}{L},\; \psi = 2 \eh,
\end{align*}
Then by \eqref{errbd10} we have
\begin{align*}
\bE[ {\rm dist}_{\| \cdot \|_\infty}(0, \partial(f+\delta_X)(\x^{Y+1})) ] {\le} C\epsilon,
\end{align*}
where $C$ is an absolute constant independent of any parameter of the algorithm or the problem data. The sample complexity is upper bounded by
\begin{align*}
   %\frac{K}{q}( mq + m_1 ) \le  K (m + q^2)  = 2 \left\lceil \frac{\Delta L}{\epsilon^2} \right\rceil \left\lceil \left( \frac{c(\log d)\sigma^2_\infty}{\epsilon^2} \right)^{1/3} \right\rceil^2 = 
   \sO\left( \frac{ \Delta L(c(\log d)\sigma_\infty^2)^{2/3}}{\epsilon^{10/3}} \right).
\end{align*}
\end{corollary}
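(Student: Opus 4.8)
The plan is to substitute the prescribed parameters directly into the bound \eqref{errbd10} of Theorem~\ref{thm: case2svrg}, verify its hypotheses, and show that each of the four summands is of order $\epsilon$. First I would check the preconditions. With $\eta = 1/L$, $\gamma = 1/2$, $\tau = t = 1/(64L)$, and $m = q^2$ (so that $q^2/m = 1$), a direct computation gives
\[
\frac{1}{\eta} - \frac{L}{2} - \frac{\gamma}{2\eta} - \frac{4(\tau+t)L^2 q^2}{m} = L - \frac{L}{2} - \frac{L}{4} - \frac{L}{8} = \frac{L}{8} > 0,
\]
and since $\psi = 2\eh > \eh$, both conditions required by Theorem~\ref{thm: case2svrg} hold.

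Next I would bound the common numerator appearing in \eqref{errbd10}. Using $K \ge \Delta L/\epsilon^2$, $m_1 \ge c(\log d)\sigma_\infty^2/\epsilon^2$, and $\eh = \epsilon/L$, $\psi = 2\epsilon/L$, each term is controlled: $\Delta/K \le \epsilon^2/L$; the term $(\tau+t)c(\log d)\sigma_\infty^2/m_1 \le \epsilon^2/(32L)$; the term $\left(\frac{1}{2\tau}+\frac{1}{2\eta\gamma}\right)\eh^2 = (32L + L)\epsilon^2/L^2 = 33\epsilon^2/L$; and $\psi^2/(2t) = 128\epsilon^2/L$. Hence the numerator is at most $c_0\,\epsilon^2/L$ for an absolute constant $c_0$.

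Then I would estimate the four summands. With denominator $L/8$, the first term is bounded by $\big(L + \tfrac{1}{\eta} + \sqrt{8L^2q^2/m}\big)\sqrt{(c_0\epsilon^2/L)/(L/8)} = (2+2\sqrt{2})L\cdot\sqrt{8c_0}\,\epsilon/L = O(\epsilon)$. Since $\psi - \eh = \epsilon/L$, the second term is $(L/\epsilon)\cdot c_0\epsilon^2/L = c_0\epsilon$. The third term is $\eh/\eta = \epsilon$, and the last is $\sqrt{2c(\log d)\sigma_\infty^2/m_1} \le \sqrt{2}\,\epsilon$. Summing these yields $\bE[{\rm dist}_{\|\cdot\|_\infty}(0,\partial(f+\delta_X)(\x^{Y+1}))] \le C\epsilon$ with $C$ an absolute constant, as claimed.

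Finally, for the sample complexity, I would count that each of the $\lceil K/q\rceil$ checkpoint iterations draws $m_1$ samples and every other iteration draws $m$, so the total is at most $Km + \lceil K/q\rceil m_1$. Because $m = q^2 = m_1^{2/3}$ forces $m_1 = q^3$, this is bounded by $2K(m + q^2) = O(Kq^2)$; substituting $K = O(\Delta L/\epsilon^2)$ and $q^2 = m_1^{2/3} = O\big((c(\log d)\sigma_\infty^2/\epsilon^2)^{2/3}\big)$ gives the stated $\sO\big(\Delta L(c(\log d)\sigma_\infty^2)^{2/3}/\epsilon^{10/3}\big)$. The main obstacle is purely bookkeeping: there is no conceptual difficulty, but one must confirm the telescoping $1-\tfrac{1}{2}-\tfrac{1}{4}-\tfrac{1}{8}=\tfrac{1}{8}$ keeps the denominator positive, and track the $\epsilon$-exponents carefully so that all four summands balance at order $\epsilon$ while the exponent $10/3$ emerges from the choice $m = m_1^{2/3}$.
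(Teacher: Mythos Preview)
Your proposal is correct and follows exactly the approach the paper intends: the corollary is stated without proof in the paper and is meant to be verified by direct substitution into \eqref{errbd10}, with the sample-complexity bookkeeping mirroring the analogous corollary after Theorem~\ref{thm: case1svrg} (the paper uses the same $Km+\lceil K/q\rceil m_1 \le 2K(m+q^2)$ step there). Your verification of the denominator $L/8$, the $O(\epsilon^2/L)$ numerator, the four $O(\epsilon)$ summands, and the $\sO(\epsilon^{-10/3})$ count are all accurate.
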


\section{Numerical Experiment}\label{section:num}
In this section, we present some numerical experiments to illustrate the dimension insensitive property of our proposed methods.\footnotemark \footnotetext{Data and codes have been uploaded to   \texttt{https://github.com/gxybrh/DISFOM}} 

\textbf{Problem setting} \quad We consider a nonconvex stochastic quadratic programming problem as follows:
\begin{align}
    \min_{\x \in X} \quad f(\x) \triangleq \frac{1}{2} \mathbb{E}\left[({\bm \alpha}^T \x - b)^2\right] + \lambda\sum_{i=1}^d \frac{\x_i^2}{1 + \x_i^2},
\end{align}
where the feasible region $X = [-R, R]^d$, $(\bm{\alpha}, b)\in \R^d\times\R $ is a random pair that satisfies linear relationship $b = \bm{\alpha}^T \x_{\rm true} + w $. We generate the scaled covariance matrix for $\bm{\alpha}$ by the following procedure. First we generate a $d\times d$ identity matrix, and then replace its top-left $\dfrac{d}{16} \times \dfrac{d}{16}$ principal sub-matrix by $\Sigma_{\rm sub} := \bm{QDQ}^T \in \R^{\frac{d}{16} \times \frac{d}{16}}$, where $\bm{Q}$ consists of the orthonormal basis of a $\dfrac{d}{16} \times \dfrac{d}{16}$ matrix whose entries are i.i.d. uniformly distributed on $(0,1)$ and $\bm{D}$ is a diagonal matrix with each diagonal entry i.i.d. uniformly distributed on $(1,2)$. Let $\bm \alpha = \Sigma^{\frac{1}{2}} s$, where $s_i$, $i = 1,...,d$ are i.i.d and obey truncated standard normal distribution over $[-u,u]$. $w$ also obeys a truncated standard normal distribution over $[-u,u]$ and is independent of $\bm \alpha$.tygj Therefore, the variance $\sigma^2$ of $s_i$ and $w$ has the following formula
\begin{align}
    \sigma^2 = 1 - \frac{\frac{2u}{\sqrt{2\pi}} \exp\left(-\frac{u^2}{2} \right) }{\Phi(u) - \Phi(-u)},
\end{align}
where $\Phi$ is the CDF of standard normal distribution. By construction, function $f$ has the following closed form:
\begin{align*}
    f(\x) = \frac{\sigma^2}{2}(\x - \x_{\rm true})^T \Sigma (\x - \x_{\rm true}) + \lambda\sum_{i=1}^d \dfrac{\x_i^2}{1 + \x_i^2} + \frac{\sigma^2}{2}.
\end{align*}
Moreover, the error of sampled gradient $\nabla_{\x} F(\x,\alpha,w) - \nabla f(\x)$ is sub-Gaussian since it has bounded support. The Lipschitz constant $L$ of $\nabla f$ is $\lambda_{\max} + 2\lambda$, where $\lambda_{\max}$ is the largest eigenvalue of $\sigma^2 \Sigma$, ranging from $\sigma^2$ to $2\sigma^2$ by construction. Therefore, $L$ has a fixed range independent of $d$. $f$ is nonconvex when the minimal eigenvalue of $\nabla^2 f$, $\lambda_{\min} - \lambda/2$ is negative, where $\lambda_{\min}$ denotes the minimal eigenvalue of $\sigma^2 \Sigma$ and equals to $\sigma^2$. %By construction $\lambda_{\min} = \sigma^2$ and $\lambda_{\min} - \lambda/2 = -0.25$.

The experiments present the quality of solutions, measured by the averaged gap $(f - f^*) / \Delta$ and residual out of 3 replications. The value of $f^*$ is given by using the projected gradient method with backtracking (see Algorithm \ref{alg: GD backtracking} in Appendix~\ref{app: alg}), to solve the correlated closed-form problem, and $\Delta \triangleq f(\x^1) - f^*$. We carry on our experiments with dimension $d \in \{2^7, 2^8, \dots, 2^{14}\}$, {$\x_{\rm true} = (1,...,1,0,...,0)^T$ (first $d/16$ elements are 1)} and start with $\xbf_1 = \bm{0}$. $R = 3$, $u = 3$, $\lambda = 2.5$. %The results of figures, i.e. the average of gap and residual, is taken on $10$ random samples $Y$ for mini-batch algorithms and $30$ random samples $Y$ for SVRG.

\textbf{Gradient generation} \quad According to the two estimating methods of gradient mentioned above, mini-batch and variance reduction, we compare three algorithms with both methods of gradient estimation. In the mini-batch setting, the batch size $m_k \equiv m = 1000$ and algorithms stop with $K = 300$. In the variance reduction setting, we set $m = 1000, q = 9$. The batch size is $m_k = m = 1000$ when $\mod(k,q) = 1$ and $m_k = m^{\frac{2}{3}} = 100$ otherwise, and the algorithms stop with $K = 1350$.
%{\yx Can you please arrange these algorithm settings in a list?}

\textbf{Methods} Here we present the detailed implementation of DISFOMs, proximal stochastic gradient descent (SGD), proximal stochastic variance reduced gradient (SVRG) 
and stochastic mirror descent (SMDs).\begin{enumerate}

\item \textbf{DISFOM$\_$minibatch} \quad Algorithm \ref{alg: DI-SGD} with $\phi(\zbf) = \dfrac{\hat{\rho}}{2} \lVert \zbf \rVert_1^2 $ (Case 1) and minibatch sampling \eqref{def: Gk-SGD}. We choose $\hat{\rho} = 2$ in minibatch.
%For case 2, we set in mini-batch setting $\psi = \dfrac{1}{\sqrt{K}}$ and SVRG setting $\psi = \dfrac{1}{\sqrt{K}}$.  In both cases 
The stepsize is $\eta_k \equiv \eta = \dfrac{1}{L}$. 

\item \textbf{DISFOM$\_$svrg} \quad Algorithm \ref{alg: DI-SGD} with $\phi(\zbf)  = \dfrac{\hat{\rho}}{2} \lVert \zbf \rVert_1^2 $ (Case 1) and variance reduction \eqref{def: Gk-SGD}. We choose $\hat{\rho} = 128$. %For case 2, we set in mini-batch setting $\psi = \dfrac{1}{\sqrt{K}}$ and SVRG setting $\psi = \dfrac{1}{\sqrt{K}}$.  In both cases 
The stepsize is $\eta_k \equiv \eta = \dfrac{1}{L}$. 

\item (Proximal) \textbf{SGD} \quad Algorithm \ref{alg: DI-SGD} with $P_X^k \equiv P_X$ (Euclidean projection on $X$). $G^k$ is estimated using minibatch \eqref{def: Gk-SGD}. The step size is $\eta_k \equiv \eta = \dfrac{1}{L}$.

\item (Proximal) \textbf{SVRG} \quad Algorithm \ref{alg: DI-SGD} with $P_X^k \equiv P_X$ (Euclidean projection on $X$). $G^k$ is estimated using variance reduction \eqref{def: Gk-SVRG}. The step size is $\eta_k \equiv \eta = \dfrac{1}{10L}$.\footnotemark \footnotetext{Here we need to decrease the stepsize of SVRG to stabilize its performance.}

\item \textbf{SMD$\_$minibatch} \quad Algorithm \ref{alg: DI-SGD}. 
Solve the proximal projection problem in Step 2.2 as follows:
 \begin{align*}
     \xbf^{k+1} = \mbox{arg}\min_{\zbf\in X} \left\{\langle G^k, \zbf\rangle + \dfrac{1}{\alpha_k} D_{\omega}(\zbf, \xbf^k) \right\}
 \end{align*}
 where $D_{\omega}(x,y) = \omega(x) - \omega(y) - \langle\nabla \omega(y), x - y \rangle $
Let the distance generating function $\omega(x) = \frac{C}{2}\lVert x\rVert_p^2$ where $p = 1 + \dfrac{1}{\ln d}$ and $C = e^2\ln d$ (so that $\omega(x)$ is 1-strongly convex w.r.t. $\| \cdot \|_1$), see \cite{beck2003mirror} and the reference therein. The step size is $\alpha_k \equiv \alpha = \dfrac{c}{\sqrt{K}}$, where $c = \sqrt{\dfrac{f(x^1)}{\rho  L^2}}$, $\rho = \dfrac{\lambda}{2} - \lambda_{\min}$, given by \cite{zhang2018convergence}. $G^k$ is estimated via minibatch.

\item \textbf{SMD$\_$svrg} \quad Same settings as in SMD$\_$minibatch, except that $G^k$ is estimated via variance reduction.

\end{enumerate}

\textbf{Results and interpretation}
\begin{figure}[ht]
    \centering
    \includegraphics[width = 0.45\textwidth]{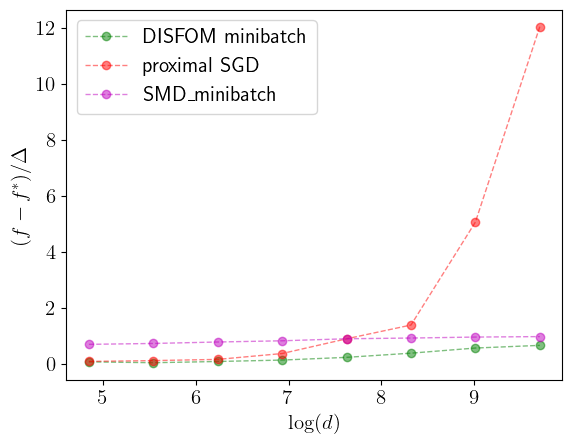}
    \includegraphics[width = 0.45\textwidth]
    {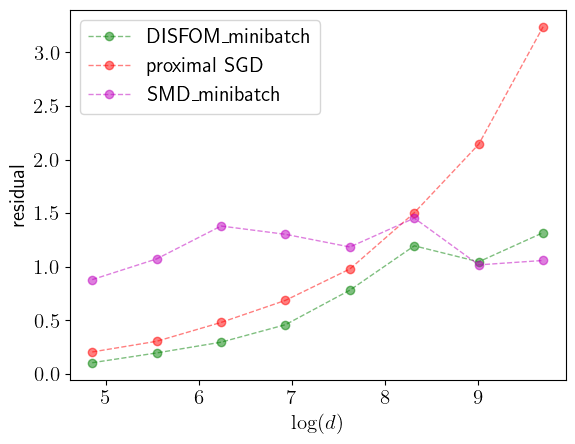}
    \caption{averaged gap $(f-f^*) / \Delta$ vs $\log(d)$ and residual vs $\log(d)$ for mini-batch}\label{fig:minibatch}
\end{figure}
\begin{figure}[ht]
    \centering
    \includegraphics[width = 0.45\textwidth]{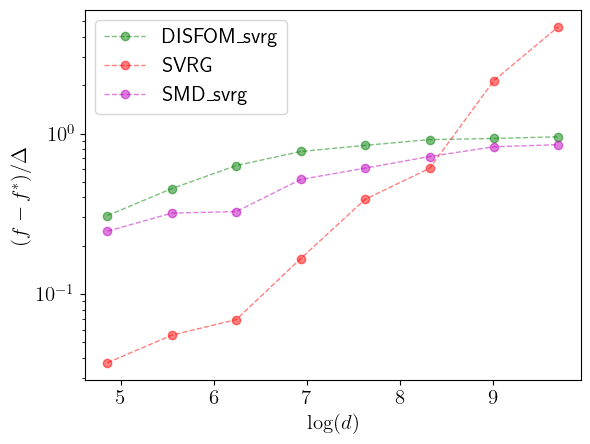}
    \includegraphics[width = 0.45\textwidth]{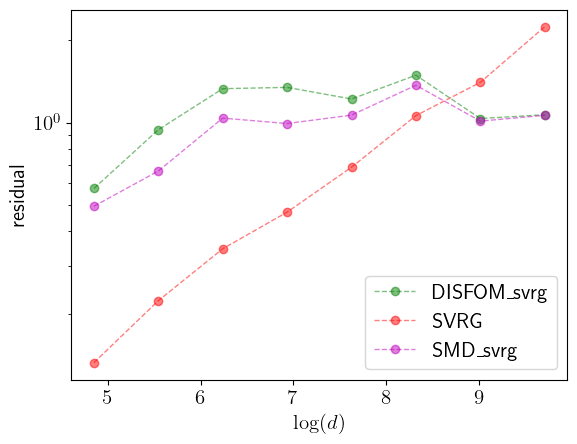}
    \caption{averaged gap $(f-f^*) / \Delta$ vs $\log(d)$ and residual vs $\log(d)$ for variance reduction}\label{fig:svrg}
\end{figure}

From Figure~\ref{fig:minibatch} and \ref{fig:svrg} we can see that the performance of vanilla SGD and SVRG would deteriorate rapidly with the increase of $d$. On the contrary, the performance of DISFOMs and SMDs are comparable and less sensitive to $d$, in terms of both the averaged relative function value gap $(f- f^*)/\Delta$ and the averaged residual $r(\x)$ defined in \eqref{def: resfunc}. 
%We would like to point out that such a phenomenon for DISFOMs is predicted by the upper bounds of the expectation of residual function derived in Theorem~\ref{thm: case1mini} - \ref{thm: case2svrg}. These upper bounds are all $\sO(\sqrt{\log d})$ when other problem parameters are kept at similar levels, verifying the dimension-insensitive property. 
To our best knowledge, there is not a proper theory for the dimension-insensitive property of SMD$\_$svrg in literature, and the numerical experiments imply this empirically. 
The results for Case 2 are similar to Case 1 thus suppressed.

\section{Conclusion}\label{sec: conclusion}

%When dealing with a nonconvex problem that is further complicated by stochasticity, the sample complexity of stochastic first-order methods can become dependent on the problem dimension, which is not ideal for large-scale problems. 
In this study, we introduce dimension-insensitive stochastic first-order methods (DISFOMs) as a solution for nonconvex optimization with an expected-valued objective function. Our algorithms are designed to accommodate non-Euclidean and non-smooth distance functions as proximal terms. When dealing with a stochastic nonconvex optimization problem, the sample complexity of stochastic first-order methods can become linearly dependent on the problem dimension. This causes trouble for solving large-scale problems. We demonstrate that DISFOM achieves a sample complexity of $O((\log d) / \epsilon^4)$ in order to obtain an $\epsilon$-stationary point via minibatch sampling. Furthermore, we show that DISFOM with variance reduction can enhance this bound to $O((\log d)^{2/3} / \epsilon^{10/3})$. We present two options for the non-smooth distance functions, both of which allow for closed-form solutions in the proximal steps. Preliminary numerical experiments illustrate the dimension-insensitive property of our proposed methods. In our future work, we intend to extend our results to the case when the objective function $f$ is non-smooth. 

%When the nonconvex problem is complicated by stochasticity, the sample complexity of stochastic first-order methods may depend linearly on the problem dimension, which is undesirable for large-scale problems. In this work, we propose dimension insensitive stochastic first-order methods (DISFOMs) to address nonconvex optimization with expected-valued objective function. Our algorithms allow for non-Euclidean and nonsmooth distance functions as the proximal terms. Under mild assumptions, we show that DISFOM using minibatches to estimate the gradient enjoys sample complexity of $ O ( (log d) / \epsilon^4 ) $ to obtain an $\epsilon$-stationary point. Furthermore, we prove that DISFOM employing variance reduction can sharpen this bound to $O( (log d)^{2/3}/\epsilon^{10/3} )$, which perhaps leads to the best-known sample complexity result in terms of $d$. We provide two choices of the nonsmooth distance functions, both of which allow for closed-form solutions to the proximal step. Numerical experiments are conducted to illustrate effectiveness of the proposed frameworks.
		
		% References here (outcomment the appropriate case)
% CASE 1: BiBTeX used to constantly update the references
%   (while the paper is being written).
\bibliographystyle{spmpsci}      % mathematics and physical sciences
\bibliography{ref} % if more than one, comma separated

\begin{thebibliography}{10}
\providecommand{\url}[1]{{#1}}
\providecommand{\urlprefix}{URL }
\expandafter\ifx\csname urlstyle\endcsname\relax
  \providecommand{\doi}[1]{DOI~\discretionary{}{}{}#1}\else
  \providecommand{\doi}{DOI~\discretionary{}{}{}\begingroup
  \urlstyle{rm}\Url}\fi

\bibitem{agarwal2012stochastic}
Agarwal, A., Negahban, S., Wainwright, M.J.: Stochastic optimization and sparse
  statistical recovery: Optimal algorithms for high dimensions.
\newblock Advances in Neural Information Processing Systems \textbf{25} (2012)

\bibitem{allen2018make}
Allen-Zhu, Z.: How to make the gradients small stochastically: Even faster
  convex and nonconvex sgd.
\newblock Advances in Neural Information Processing Systems \textbf{31} (2018)

\bibitem{bach2013non}
Bach, F., Moulines, E.: Non-strongly-convex smooth stochastic approximation
  with convergence rate o (1/n).
\newblock In: Advances in neural information processing systems, pp. 773--781
  (2013)

\bibitem{beck2003mirror}
Beck, A., Teboulle, M.: Mirror descent and nonlinear projected subgradient
  methods for convex optimization.
\newblock Operations Research Letters \textbf{31}(3), 167--175 (2003)

\bibitem{carmon2018accelerated}
Carmon, Y., Duchi, J.C., Hinder, O., Sidford, A.: Accelerated methods for
  nonconvex optimization.
\newblock SIAM Journal on Optimization \textbf{28}(2), 1751--1772 (2018)

\bibitem{chambolle2018stochastic}
Chambolle, A., Ehrhardt, M.J., Richt{\'a}rik, P., Schonlieb, C.B.: Stochastic
  primal-dual hybrid gradient algorithm with arbitrary sampling and imaging
  applications.
\newblock SIAM Journal on Optimization \textbf{28}(4), 2783--2808 (2018)

\bibitem{chambolle2011first}
Chambolle, A., Pock, T.: A first-order primal-dual algorithm for convex
  problems with applications to imaging.
\newblock Journal of mathematical imaging and vision \textbf{40}(1), 120--145
  (2011)

\bibitem{chung1954stochastic}
Chung, K.L.: On a stochastic approximation method.
\newblock The Annals of Mathematical Statistics pp. 463--483 (1954)

\bibitem{davis2018stochastic}
Davis, D., Drusvyatskiy, D.: Stochastic subgradient method converges at the
  rate $ o (k^{-1/4}) $ on weakly convex functions.
\newblock arXiv preprint arXiv:1802.02988  (2018)

\bibitem{davis2019stochastic}
Davis, D., Drusvyatskiy, D.: Stochastic model-based minimization of weakly
  convex functions.
\newblock SIAM Journal on Optimization \textbf{29}(1), 207--239 (2019)

\bibitem{davis2022gradient}
Davis, D., Drusvyatskiy, D., Lee, Y.T., Padmanabhan, S., Ye, G.: A gradient
  sampling method with complexity guarantees for lipschitz functions in high
  and low dimensions.
\newblock Advances in neural information processing systems \textbf{35},
  6692--6703 (2022)

\bibitem{devolder2011stochastic}
Devolder, O., et~al.: Stochastic first order methods in smooth convex
  optimization.
\newblock Tech. rep., CORE (2011)

\bibitem{fang2018spider}
Fang, C., Li, C.J., Lin, Z., Zhang, T.: Spider: Near-optimal non-convex
  optimization via stochastic path-integrated differential estimator.
\newblock Advances in neural information processing systems \textbf{31} (2018)

\bibitem{ghadimi2012optimal}
Ghadimi, S., Lan, G.: Optimal stochastic approximation algorithms for strongly
  convex stochastic composite optimization i: A generic algorithmic framework.
\newblock SIAM Journal on Optimization \textbf{22}(4), 1469--1492 (2012)

\bibitem{ghadimi2013stochastic}
Ghadimi, S., Lan, G.: Stochastic first-and zeroth-order methods for nonconvex
  stochastic programming.
\newblock SIAM Journal on Optimization \textbf{23}(4), 2341--2368 (2013)

\bibitem{ghadimi2016mini}
Ghadimi, S., Lan, G., Zhang, H.: Mini-batch stochastic approximation methods
  for nonconvex stochastic composite optimization.
\newblock Mathematical Programming \textbf{155}(1-2), 267--305 (2016)

\bibitem{horvath2020adaptivity}
Horv{\'a}th, S., Lei, L., Richt{\'a}rik, P., Jordan, M.I.: Adaptivity of
  stochastic gradient methods for nonconvex optimization.
\newblock SIAM Journal on Mathematics of Data Science \textbf{4}(2), 634--648
  (2022)

\bibitem{johnson2013accelerating}
Johnson, R., Zhang, T.: Accelerating stochastic gradient descent using
  predictive variance reduction.
\newblock Advances in neural information processing systems \textbf{26} (2013)

\bibitem{kornowski2022oracle}
Kornowski, G., Shamir, O.: Oracle complexity in nonsmooth nonconvex
  optimization.
\newblock Journal of Machine Learning Research \textbf{23}(314), 1--44 (2022)

\bibitem{lan2012optimal}
Lan, G.: An optimal method for stochastic composite optimization.
\newblock Mathematical Programming \textbf{133}(1-2), 365--397 (2012)

\bibitem{lan2020first}
Lan, G.: First-order and Stochastic Optimization Methods for Machine Learning.
\newblock Springer Nature (2020)

\bibitem{li2019ssrgd}
Li, Z.: Ssrgd: Simple stochastic recursive gradient descent for escaping saddle
  points.
\newblock In: Advances in Neural Information Processing Systems, pp. 1523--1533
  (2019)

\bibitem{metel2019simple}
Metel, M., Takeda, A.: Simple stochastic gradient methods for non-smooth
  non-convex regularized optimization.
\newblock In: International Conference on Machine Learning, pp. 4537--4545.
  PMLR (2019)

\bibitem{nemirovski2009robust}
Nemirovski, A., Juditsky, A., Lan, G., Shapiro, A.: Robust stochastic
  approximation approach to stochastic programming.
\newblock SIAM Journal on optimization \textbf{19}(4), 1574--1609 (2009)

\bibitem{nguyen2017sarah}
Nguyen, L.M., Liu, J., Scheinberg, K., Tak{\'a}{\v{c}}, M.: Sarah: A novel
  method for machine learning problems using stochastic recursive gradient.
\newblock In: International Conference on Machine Learning, pp. 2613--2621.
  PMLR (2017)

\bibitem{nitanda2017stochastic}
Nitanda, A., Suzuki, T.: Stochastic difference of convex algorithm and its
  application to training deep boltzmann machines.
\newblock In: Artificial intelligence and statistics, pp. 470--478 (2017)

\bibitem{pham2020proxsarah}
Pham, N.H., Nguyen, L.M., Phan, D.T., Tran-Dinh, Q.: Proxsarah: An efficient
  algorithmic framework for stochastic composite nonconvex optimization.
\newblock Journal of Machine Learning Research \textbf{21}(110), 1--48 (2020)

\bibitem{polyak1990new}
Polyak, B.T.: New stochastic approximation type procedures.
\newblock Automat. i Telemekh \textbf{7}(98-107), 2 (1990)

\bibitem{polyak1992acceleration}
Polyak, B.T., Juditsky, A.B.: Acceleration of stochastic approximation by
  averaging.
\newblock SIAM journal on control and optimization \textbf{30}(4), 838--855
  (1992)

\bibitem{robbins1951stochastic}
Robbins, H., Monro, S.: A stochastic approximation method.
\newblock The annals of mathematical statistics pp. 400--407 (1951)

\bibitem{rosasco2019convergence}
Rosasco, L., Villa, S., V{\~u}, B.C.: Convergence of stochastic proximal
  gradient algorithm.
\newblock Applied Mathematics \& Optimization pp. 1--27 (2019)

\bibitem{sacks1958asymptotic}
Sacks, J.: Asymptotic distribution of stochastic approximation procedures.
\newblock The Annals of Mathematical Statistics \textbf{29}(2), 373--405 (1958)

\bibitem{wang2019spiderboost}
Wang, Z., Ji, K., Zhou, Y., Liang, Y., Tarokh, V.: Spiderboost and momentum:
  Faster variance reduction algorithms.
\newblock Advances in Neural Information Processing Systems \textbf{32},
  2406--2416 (2019)

\bibitem{xu2019stochastic}
Xu, Y., Qi, Q., Lin, Q., Jin, R., Yang, T.: Stochastic optimization for dc
  functions and non-smooth non-convex regularizers with non-asymptotic
  convergence.
\newblock In: International Conference on Machine Learning, pp. 6942--6951.
  PMLR (2019)

\bibitem{yang2016linear}
Yang, W.H., Han, D.: Linear convergence of the alternating direction method of
  multipliers for a class of convex optimization problems.
\newblock SIAM journal on Numerical Analysis \textbf{54}(2), 625--640 (2016)

\bibitem{zhang2020complexity}
Zhang, J., Lin, H., Jegelka, S., Sra, S., Jadbabaie, A.: Complexity of finding
  stationary points of nonconvex nonsmooth functions.
\newblock In: International Conference on Machine Learning, pp. 11173--11182.
  PMLR (2020)

\bibitem{zhang2018convergence}
Zhang, S., He, N.: On the convergence rate of stochastic mirror descent for
  nonsmooth nonconvex optimization.
\newblock arXiv preprint arXiv:1806.04781  (2018)

\bibitem{zhou2020stochastic}
Zhou, D., Xu, P., Gu, Q.: Stochastic nested variance reduction for nonconvex
  optimization.
\newblock Journal of machine learning research \textbf{21}(103), 1--63 (2020)

\end{thebibliography}

% \begin{appendices}
	
% \end{appendices}	

\appendix

\section{Proofs}\label{app: proof}
Proof of Lemma~\ref{lm: l1consprox}.
\begin{proof}
The corresponding KKT-condition is as follows\begin{align}
\nabla_{\z} L(\z, \lambda) &= (\z - \vbf) + \lambda \partial\lVert \z \rVert_1 = 0 \label{eq:1}\\
g(x) &=  \lVert \z \rVert_1 - \psi \leq 0 \\
\lambda &\geq 0 \\
\lambda g(x) &= 0
\end{align}
From \eqref{eq:1} we have \begin{align*}
\z_i - \vbf_i + \lambda = 0 & \quad \text{if} \quad  \z_i > 0 \\
\z_i - \vbf_i - \lambda = 0 & \quad \text{if} \quad  \z_i < 0 \\
-\lambda \leq \z_i - \vbf_i \leq \lambda & \quad \text{if} \quad  \z_i = 0 \\
\end{align*}
Thus we have \begin{align}
\z_i = \vbf_i - \lambda & \quad \text{if} \quad \vbf_i - \lambda > 0 \label{eq:x1} \\
\z_i = \vbf_i + \lambda & \quad \text{if} \quad \vbf_i + \lambda < 0 \label{eq:x2} \\
\z_i = 0 & \quad \text{if} \quad \lvert \vbf_i \rvert \leq \lambda \label{eq:x3}
\end{align}
Suppose $\lambda = 0$, then $\z = \vbf$. This is the solution if $ \| \vbf \|_1 \le \psi$. Otherwise suppose that $\lambda > 0$. Then we have \begin{align} 
\lVert \z \rVert_1 = \sum_{i}^{\lvert \vbf_i \rvert > \lambda} \lvert \z_i \rvert 
= \sum_{i}^{\vbf_i > \lambda} (\vbf_i - \lambda) + \sum_{i}^{\vbf_i < -\lambda}(-\vbf_i - \lambda) = 
\sum_{i}^{\lvert \vbf_i\rvert > \lambda} (\lvert \vbf_i \rvert - \lambda) = \psi
\end{align}
Suppose there exists $m$ such that $s_{m-1} < \psi \le s_m$, then we can deduce that $| \vbf_{i_m} | > \lambda  \ge | \vbf_{i_{m+1}} |$, therefore,
\begin{align}
    \psi = \sum_{t=1}^m \lvert \vbf_{i_t}\rvert - m\lambda \implies 
    \lambda = \dfrac1{m}\left(\sum_{t=1}^{m} \left\lvert \vbf_{i_t} \right\rvert - \psi \right). \label{eq:v}
\end{align}
Finally we could derive $\z$ from (\ref{eq:x1}), (\ref{eq:x2}), (\ref{eq:x3}), (\ref{eq:v}) as follows.
\begin{align}
    \z_{i_t} = \begin{cases}
        0 & \mbox{if }  m+1 \le t \le d \\
        \vbf_{i_t} - \frac{\sgn\left(\vbf_{i_t}\right)}{m}\left(\sum_{k=1}^{m} \left\lvert \vbf_{i_k} \right\rvert - \psi \right) & \mbox{if }  1 \le t \le m
    \end{cases}
\end{align} \qed
\end{proof}
Proof of Lemma~\ref{lm: subG}.
\begin{proof}
    (1) is obvious from Assumption~\ref{ass: subG}. (2) is based on the fact that summation of independent mean-zero $\sigma_i^2$-sub-Gaussian ($i = 1,\hdots,m$) is $\sum_{i=1}^m \sigma_i^2$-sub-Gaussian. (1) and (2) indicate that given $\x^k$, $\bar w^k_j$ is $\sigma_\infty^2/m$-sub-Gaussian, for any $j = 1,\hdots,d$. Therefore, according to the property of sub-Gaussian, we have that $\bE[ \exp((\bar w^k_j)^2/(6\sigma_\infty^2/m)) \mid \x^k] \le 2$. Then, for any $t > 0$,
    \begin{align*}
       & \exp(t \bE[ \| \bar w^k \|_\infty^2 \mid \x^k]) \\
       & \le \bE[\exp( t \max_{1 \le j \le d} (\bar w^k_j)^2 )\mid \x^k] \\
       & = \bE[ \max_{1 \le j \le d} \exp( t (\bar w^k_j)^2 ) \mid \x^k] \\
       & \le \bE[ \sum_{j = 1}^d \exp( t (\bar w^k_j)^2 ) \mid \x^k] \\
       & = \sum_{j = 1}^d \bE[ \exp( t (\bar w^k_j)^2 )\mid \x^k ],
    \end{align*}
    where the first inequality is from Jensen's inequality. If we let $t = m/(6\sigma_\infty^2)$, then
    \begin{align*}
        & \exp\left( \frac{m}{6\sigma_\infty^2} \bE[ \| \bar w^k \|_\infty^2 \mid \x^k] \right) \le 2d \implies \bE[ \| \bar w^k \|_\infty^2 \mid \x^k] \le 6\log(2d) \sigma_\infty^2/m,
    \end{align*}
    and we proved (3).
    \qed
\end{proof}
Proof of Lemma~\ref{lm: svrgbd}.
\begin{proof}
If we let $$
B^k(\,\cdot\,):=\frac{1}{m_k}  \sum_{i=1}^{m_k} \nabla f(\,\cdot\,,\zeta^k_i),
$$
we then have for any $k$,
\begin{align}\label{ineq: vr}
\begin{aligned}
    & \E [ \| \nabla f(\x^k) - B^k(\x^k) + B^k(\x^{n_k}) - \nabla f(\x^{n_k})  \|^2 \mid \x^k,\x^{n_k} ] \\
    & = \E [ \| \nabla f(\x^k) - \nabla f(\x^k, \zeta^k_1) + \nabla 
f(\x^{n_k}, \zeta^k_1) - \nabla f(\x^{n_k})  \|^2 \mid \x^k,\x^{n_k} ]/m_k,
\end{aligned}
\end{align}
since $\nabla f(\x^k, \zeta^k_i) - \nabla f(\x^{n_k}, \zeta^k_i)$, $i = 1,\hdots,m_k$ are i.i.d. with mean $\nabla f(\x^k) - \nabla f(\x^{n_k})$ given $\x^k$ and $\x^{n_k}$.
Therefore,
\begin{align}
    \notag
    & \bE[ \| \nabla f(\xbf^k) - G^k \|_\infty^2 ] \\
    \notag
  &  =  \,\bE[ \| \nabla f(\xbf^k) - B^k(\xbf^k) + B^k(\xbf^{n_k}) - G^{n_k} \|_\infty^2 ] \\
    \notag
  &  = \,\bE[ \| \nabla f(\xbf^k) - B^k(\xbf^k) + B^k(\xbf^{n_k}) - G^{n_k} - \nabla f(\xbf^{n_k}) + \nabla f(\xbf^{n_k}) \|_\infty^2 ] \\
    \notag
   &   \le\, 2 \bE[ \| \nabla f(\xbf^k) - B^k(\xbf^k) + B^k(\xbf^{n_k}) - \nabla f(\xbf^{n_k}) \|^2 ] + 2 \bE[ \| - G^{n_k} + \nabla f(\xbf^{n_k}) \|_\infty^2 ] \\
    \notag
   &  \overset{\eqref{redvar-SVRG1},\eqref{ineq: vr}}{\le}  \,2\bE[ \bE [  \| \nabla f(\xbf^k) - \nabla f(\xbf^{n_k}) - \nabla f(\xbf^k,\zeta^k_1) + \nabla f (\xbf^{n_k},\zeta^k_1) \|^2 \mid \xbf^k, \xbf^{n_k}] / m_k ] \\
   \notag
   & + \frac{2c(\log d)\sigma_\infty^2}{m_{n_k}} \\
    \notag
   &  \le  \,
   \frac{4}{m_k} \bE[  \bE [  \| \nabla f(\xbf^k) - \nabla f(\xbf^{n_k}) \|^2 + \| - \nabla f (\xbf^k,\zeta^k_1) + \nabla f(\xbf^{n_k},\zeta^k_1) \|^2 \mid \xbf^k, \xbf^{n_k}]] \\
   \notag
   & + \frac{2c(\log d)\sigma_\infty^2}{m_{n_k}} \\
    \notag
   &  =  \,
   \frac{4}{m_k} \bE[ \| \nabla f(\xbf^k) - \nabla f(\xbf^{n_k}) \|^2 ]
    + 4 \bE[ \bE[ \| - \nabla f (\xbf^k,\zeta^k_1) + \nabla f (\xbf^{n_k},\zeta^k_1) \|^2 \mid \xbf^k, \xbf^{n_k}] / m_k ] \\
    \notag
   & + \frac{2c(\log d)\sigma_\infty^2}{m_{n_k}} \\
    \notag
   &  \le  \, \frac{4L^2}{m_k} \bE[ \| \xbf^k - \xbf^{n_k} \|^2 ] + 4 \bE[ \bE [L^2 \| \xbf^k - \xbf^{n_k} \|^2 \mid \x^k, \x^{n_k} ] / m_k ] + \frac{2c(\log d)\sigma_\infty^2}{m_{n_k}} \\
   \label{ineq: redvar}
   &  =  \,\frac{8 L^2}{m_k} \bE[ \| \xbf^k - \xbf^{n_k} \|^2 ] + \frac{2c(\log d)\sigma_\infty^2}{m_{n_k}}.
\end{align}
Note that for $t \ge n_k$,
\begin{align}
\notag
    \| \xbf^t - \xbf^{n_k} \|^2 & = \left\| \sum_{i = n_k}^{t-1} (\xbf^{i+1} - \xbf^i) \right\|^2 = (t - n_k)^2 \left\| \frac{1}{t-n_k} \sum_{i = n_k}^{t-1} (\xbf^{i+1} - \xbf^i) \right\|^2 \\
    \label{ineq: nn1}
    & \le (t - n_k)^2 \cdot \frac{1}{t-n_k} \sum_{i = n_k}^{t-1} \| \xbf^{i+1} - \xbf^i \|^2 = (t - n_k) \sum_{i = n_k}^{t-1} \| \xbf^{i+1} - \xbf^i \|^2.
\end{align}
Therefore, for any $k$,
\begin{align}
\notag
    \sum_{t = n_k}^k \bE[ \| \xbf^t - \xbf^{n_k} \|^2 ] & \overset{\eqref{ineq: nn1}}{\le} \sum_{t = n_k}^k (t - n_k) \sum_{i = n_k}^{t-1} \bE[ \| \xbf^{i+1} - \xbf^i \|^2 ] \\
    \notag
    & \le \sum_{t = n_k}^{k-1} \frac{(1+k -n_k)(k-n_k)}{2} \bE[\| \xbf^{t+1} - \xbf^t \|^2] \\
    \notag
    & \le q^2 \sum_{t = n_k}^{k-1} \bE[\| \xbf^{t+1} - \xbf^t \|^2] \\
    \label{ineq: nn2}
    & \le q^2 \sum_{t = n_k}^k \bE[\| \xbf^{t+1} - \xbf^t \|^2].
\end{align}
Note that we assume $m_k \equiv m$ if $k \neq n_k$ and $m_{n_k} \equiv m_1$. Therefore,
\begin{align}
\notag
    & \bE[ \| \nabla f(\x^Y) - G^Y \|_\infty^2 ] \\
    \notag
    & = \frac{1}{K} \sum_{k=1}^K \bE[ \| \nabla f(\x^k) - G^k \|_\infty^2 ] \\
    \notag
    & \overset{\eqref{ineq: redvar}}{\le} \frac{8L^2}{Km} \sum_{k=1}^K \bE[\| \x^k - \x^{n_k} \|^2] +\frac{2c(\log d)\sigma_\infty^2}{m_1} \\
    \notag
    & \overset{\eqref{ineq: nn2}}{\le} \frac{8L^2q^2}{Km} \sum_{k=1}^K \bE[\| \x^{k+1} - \x^k \|^2] +  \frac{2c(\log d)\sigma_\infty^2}{m_1} \\
    \notag
    & = \frac{8L^2q^2}{m} \bE[ \| \x^{Y+1} - \x^Y \|^2 ] + \frac{2c(\log d)\sigma_\infty^2}{m_1}.
\end{align}\qed
\end{proof}

Proof of Theorem~\ref{thm: case1svrg}.
\begin{proof}
Note that similar to deriving \eqref{errbd3}, \eqref{errbd1}\eqref{setting1}\eqref{subgbd}\eqref{ineq: phi} lead to 
\begin{align}
\notag
& \left( \frac{1}{\eta} - \frac{L}{2} - \frac{\gamma}{2 \eta} \right) \bE [ \| \Delta \xbf^{Y+1} \|^2 ] + \frac{\hrho}{8\eta} \bE[ \| \ybf^{Y+1} - \xbf^Y \|_1^2 ] \\\notag
& \le \frac{\Delta}{K} +  \frac{\tau + t}{2}  \bE[ \| g^Y - G^Y \|_\infty^2 ] + \left( \frac{1}{2\tau} + \frac{1}{2\eta\gamma} + \frac{2\hrho}{\eta} \right) \eh^2 \\\notag
& \overset{\eqref{redvar-SVRG2}}{\le} \frac{\Delta}{K} +  \frac{\tau + t}{2}  \left( \frac{8L^2q^2}{m} \bE[ \| \x^{Y+1} - \x^Y \|^2 ] + \frac{2c(\log d)\sigma_\infty^2}{m_1} \right) + \left( \frac{1}{2\tau} + \frac{1}{2\eta\gamma} + \frac{2\hrho}{\eta} \right) \eh^2 \\\notag
& \implies \left( \frac{1}{\eta} - \frac{L}{2} - \frac{\gamma}{2 \eta}  - \frac{4(\tau + t)L^2q^2}{m} \right) \bE [ \| \Delta \xbf^{Y+1} \|^2 ] + \frac{\hrho}{8\eta} \bE[ \| \ybf^{Y+1} - \xbf^Y \|_1^2 ] \\
\label{errbd7}
& \le \frac{\Delta}{K} + \frac{(\tau + t)c(\log d)\sigmainf^2}{m_1} + \left( \frac{1}{2\tau} + \frac{1}{2\eta\gamma} + \frac{2\hrho}{\eta} \right) \eh^2.
\end{align}
By \eqref{redvar-SVRG2}\eqref{errbd2}\eqref{subgbd}\eqref{errbd7},
\begin{align}
\notag
    & \bE[ {\rm dist}_{\| \cdot \|_\infty}(0, \partial(f+\delta_X)(\x^{Y+1})) ] \\
\notag
& \le \left(L + \frac{1}{\eta} \right) \sqrt{\bE[ \| \Delta \x^{Y+1} \|^2 ]} +  \sqrt{\bE[ \| g^Y - G^Y \|_\infty^2 ]} + \frac{1}{\eta} \bE[ \| \xi^{Y+1} \|_\infty ] + \frac{ \eh}{\eta} \\ \notag
& \le \left(L + \frac{1}{\eta} \right) \sqrt{\bE[ \| \Delta \x^{Y+1} \|^2 ]} + \sqrt{ \frac{8L^2q^2}{m} \bE[ \| \x^{Y+1} - \x^Y \|^2 ] + \frac{2c(\log d)\sigma_\infty^2}{m_1} }\\ \notag
& + \frac{ \hrho}{\eta} \bE[ \| \y^{Y+1} - \x^Y \|_1 ] + \frac{ \eh}{\eta} \\ \notag
& \le \left(L + \frac{1}{\eta} + \sqrt{\frac{8L^2q^2}{m}} \right) \sqrt{\bE[ \| \Delta \x^{Y+1} \|^2 ]} +  \sqrt{ \frac{2c(\log d)\sigma_\infty^2}{m_1} } + \frac{\hrho}{\eta} \bE[ \| \y^{Y+1} - \x^Y \|_1 ] + \frac{\eh}{\eta} \\ \notag
    & \le \left( L + \frac{1}{\eta} + R\sqrt{\frac{8L^2q^2}{m}} \right) \sqrt{\frac{\frac{\Delta}{K} + \frac{(\tau + t)c(\log d)\sigmainf^2}{m_1} + \left( \frac{1}{2\tau} + \frac{1}{2\eta\gamma} + \frac{2\hrho}{\eta} \right) \eh^2}{\frac{1}{\eta} - \frac{L}{2} - \frac{\gamma}{2\eta} - \frac{4(\tau + t)L^2q^2}{m} }}  \\ 
\notag
    & + \sqrt{ \frac{2c(\log d)\sigma_\infty^2}{m_1} } + \frac{\hrho}{\eta}\sqrt{\frac{\frac{\Delta}{K} + \frac{(\tau + t)c(\log d)\sigmainf^2}{m_1} + \left( \frac{1}{2\tau} + \frac{1}{2\eta\gamma} + \frac{2\hrho}{\eta} \right) \eh^2}{\hrho/(8\eta)}} + \frac{\eh}{\eta}.
\end{align}
\qed
\end{proof}
Proof of Theorem~\ref{thm: case2svrg}.
\begin{proof}
Similar to the derivation of \eqref{errbd5}, we have
\begin{align}
\notag
&  \left( \frac{1}{\eta} - \frac{L}{2} - \frac{\gamma}{2 \eta} \right) \bE [ \| \Delta \xbf^{Y+1} \|^2 ] + \frac{\psi - \eh}{\eta} \bE[ \hrho_Y ] \\
\notag
& \le \frac{\Delta}{K} +  \frac{\tau + t}{2}  \bE[ \| g^Y - G^Y \|_\infty^2 ] + \left( \frac{1}{2\tau} + \frac{1}{2\eta\gamma} \right) \eh^2 + \frac{\psi^2}{2t} \\
\notag
& \overset{\eqref{redvar-SVRG2}}{\le} \frac{\Delta}{K} + \frac{\tau+t}{2} \left( \frac{8L^2q^2}{m} \bE[ \| \x^{Y+1} - \x^Y \|^2 ] + \frac{2c(\log d)\sigma_\infty^2}{m_1} \right) + \left( \frac{1}{2\tau} + \frac{1}{2\eta\gamma} \right) \eh^2 + \frac{\psi^2}{2t} \\
\notag
\implies & \left( \frac{1}{\eta} - \frac{L}{2} - \frac{\gamma}{2 \eta} - \frac{4(\tau + t)L^2q^2}{m} \right) \bE [ \| \Delta \xbf^{Y+1} \|^2 ] + \frac{\psi - \eh}{\eta} \bE[ \hrho_Y ] \\
\label{errbd9}
& \le \frac{\Delta}{K} + \frac{(\tau+t)c(\log d)\sigmainf^2}{m_1} + \left( \frac{1}{2\tau} + \frac{1}{2\eta\gamma} \right) \eh^2 + \frac{\psi^2}{2t}
\end{align}
Then by \eqref{redvar-SVRG2}\eqref{errbd2},
\begin{align}
\notag
    & \bE[ {\rm dist}_{\| \cdot \|_\infty}(0, \partial(f+\delta_X)(\x^{Y+1})) ] \\
\notag
& \le \left(L + \frac{1}{\eta} \right) \sqrt{\bE[ \| \Delta \x^{Y+1} \|^2 ]} +  \sqrt{\bE[ \| g^Y - G^Y \|_\infty^2 ]} + \frac{1}{\eta} \bE[ \| \xi^{Y+1} \|_\infty ] + \frac{ \eh}{\eta} \\ \notag
& \le \left(L + \frac{1}{\eta} \right) \sqrt{\bE[ \| \Delta \x^{Y+1} \|^2 ]} + \sqrt{ \frac{8L^2q^2}{m} \bE[ \| \x^{Y+1} - \x^Y \|^2 ] + \frac{2c(\log d)\sigma_\infty^2}{m_1} } + \frac{1}{\eta} \bE[ \| \xi^{Y+1} \|_\infty ] \\
\notag
& + \frac{ \eh}{\eta} \\ \notag
& \overset{\eqref{subgbd2}}{\le} \left(L + \frac{1}{\eta} + \sqrt{\frac{8L^2q^2}{m}} \right) \sqrt{\bE[ \| \Delta \x^{Y+1} \|^2 ]} +  \sqrt{ \frac{2c(\log d)\sigma_\infty^2}{m_1} } + \frac{1}{\eta} \bE[  \hrho _Y ] + \frac{\eh}{\eta} \\ \notag
    & \overset{\eqref{errbd9}}{\le} \left( L + \frac{1}{\eta} + \sqrt{\frac{8L^2q^2}{m}} \right) \sqrt{\frac{\frac{\Delta}{K} + \frac{(\tau + t)c(\log d)\sigmainf^2}{m_1} + \left( \frac{1}{2\tau} + \frac{1}{2\eta\gamma} \right) \eh^2 + \frac{\psi^2}{2t} }{\frac{1}{\eta} - \frac{L}{2} - \frac{\gamma}{2\eta} - \frac{4(\tau + t)L^2q^2}{m} }}  \\ 
    \notag
    & + \sqrt{ \frac{2c(\log d)\sigma_\infty^2}{m_1} } + \frac{1}{\psi - \eh} \left( \frac{\Delta}{K} + \frac{(\tau + t)c(\log d)\sigmainf^2}{m_1} + \left( \frac{1}{2\tau} + \frac{1}{2\eta\gamma} \right) \eh^2 + \frac{\psi^2}{2t} \right) + \frac{\eh}{\eta}.
\end{align}\qed
\end{proof}

\section{Algorithm}\label{app: alg}
Here we provide the details of the related algorithms: Alternating direction method of multiplier (ADMM) to solve \eqref{proj}\eqref{def: proj}, gradient descent with backtracking (Algorithm \ref{alg: GD backtracking}) used for computing $f^*$ in the numerical experiment. %vanilla SGD and SVRG (Algorithm \ref{alg: vanilla SGD}), and SMD (Algorithm \ref{alg: SMD}) for comparison with our proposed DISFOMs.

{We start with the following structured convex program:
    \begin{align}\label{scp}
        \min \quad  \tf(w) + \tg(z) \quad \mbox{s.t.} \quad Aw + Bz = b, \; w\in \mathcal{X}, \; z \in \mathcal{Z},
    \end{align}
    where $\tf: \bR^n \to \bR$, $\tg: \bR^m \to \bR$ are convex functions and $\mathcal{X},\mathcal{Z}$ are closed and convex sets. The ADMM algorithm is as follows. 
    \begin{algorithm}[htbp] 
\caption{Alternating direction method of multiplier (ADMM) to solve \eqref{scp}}\label{alg: admm}
\begin{description}
\item[{\bf Step 1.}] Initialize primal variable $w^1$, dual variable $\lambda^1$, penalty parameter $\rho > 0$
\item[{\bf Step 2.}] Invoke the following operations for $t=1,2,...$
\begin{description}
 \item[{\bf Step 2.1}] $ w^{t+1} = \mbox{arg}\min\limits_{w \in \mathcal{X}} \tf(w) - w^T A^T \lambda^t + \frac{\rho}{2} \| A w + B z^t - b \|^2  $
 \item[{\bf Step 2.2}] $ z^{t+1} = \mbox{arg}\min\limits_{z \in \mathcal{Z}} \tg(z) - z^T B^T \lambda^t + \frac{\rho}{2} \| A w^{t+1} + B z - b \|^2  $
 \item[{\bf Step 2.3}] $\lambda^{t+1} = \lambda^t - \rho ( A w^{t+1} + B z^{t+1} - b ) $.
%\item[{\bf Step 2.4}] Stop when $\rho \| \y^{k+1} - \y^k \|_\infty \le \eh$ and $ \| \x^{k+1} - \y^{k+1} \|_1 \le \eh $. 
\end{description}
%\item[{\bf Step 3.}] Output the triple $(\x^{k+1}$, $\y^{k+1}$, $\lambda^{k+1})$.
\end{description}
\end{algorithm} 
\paragraph{Linear convergence and calculation of $P_X^k(\cdot)$.} 
Under the assumption that $\partial \tf$ and $\partial \tg$ are piecewise linear multifunctions (recall that $F$ is a piecewise linear multifunction if $Gr(F) := \{ (x,y) \mid y \in F(x) \}$ is the union of finitely many polyhedra) and $\mathcal{X},\mathcal{Z}$ are polyhedra, \cite{yang2016linear} showed that ADMM enjoys linear convergence.\\
More specifically, denote $v^t = (z^t,\lambda^t) $, $v^* = (z^*, \lambda^*)$, $H = \left( \begin{array}{cc}
       \rho^2 B^T B &  \\
         & I
    \end{array}
    \right)$,
    \begin{align*}
    \Omega^* = \{ (z^*,\lambda^*) \mid \exists w^* \mbox{ such that $(w^*,z^*,\lambda^*)$ is the KKT solution of \eqref{scp}}\}. 
\end{align*}
Then according to \cite[Theorem 14]{yang2016linear},
\begin{align}\label{app:thm14}
    {\rm dist}_H^2(v^{t+1},\Omega^*) \le \tau {\rm dist}_H^2(v^t, \Omega^*),
\end{align}
where ${\rm dist}_H(\cdot,\Omega^*) \triangleq \inf_{z \in \Omega^*} \| \cdot - z \|_H$ and $\tau$ is an constant in $(0,1)$. Also, according to \cite[Theorem 8]{yang2016linear}, we have
\begin{align}\label{app:thm8}
    \| v^{t+1} - v^t \|_H^2 \le \| v^t - v^* \|_H^2 - \| v^{t+1} - v^* \|_H^2
\end{align}
for any $v^* \in \Omega^*$. Combining \eqref{app:thm14} and \eqref{app:thm8}, we have that $\| v^{t+1} - v^t \|_H^2$ converges to $0$ at a linear rate. To calculate $P_X^k(\cdot)$ defined in \eqref{def: proj} at $k$th iteration, we can invoke Algorithm~\ref{alg: admm} by assigning
\begin{align*}
    \tf(w) = \frac{1}{2}\| w - (\x^k - \eta G^k ) \|^2, \quad \tg(z) = \phi(z - \x^k), \quad \mathcal{X} = X, \quad \mathcal{Y} = \bR^d, \\
    A = I, \quad B = -I, \quad b = 0.
\end{align*}
Then each step of ADMM is easy to compute by our assumption. Moreover, $H= \left( \begin{array}{cc}
       \rho^2 I_d &  \\
         & I_d
    \end{array}
    \right)$ 
and the quantity $ \| v^{t+1} - v^t \|_H^2 = \rho^2 \| z^{t+1} - z^t \|^2 + \| \lambda^{t+1} - \lambda^t \|^2$ converges to $0$ in a linear rate. We stop the algorithm when 
    \begin{align}\label{stopcr}
    \rho \| z^{t+1} - z^t  \| \le \eh, \quad \| \lambda^{t+1} - \lambda^t \|_1/\rho \le \eh,
    \end{align}
    and let
    $$
    \x^{k+1} := w^{t+1},\quad \y^{k+1} := z^{t+1},\quad \xi^{k+1} := -\lambda^{t+1}, \quad \Gamma^{k+1} := \rho (z^{t+1} - z^t).
    $$
Then by the optimality conditions of Step 2.1 and Step 2.2 in Algorithm~\ref{alg: admm}, we have that \eqref{ineq: proj1} and \eqref{eq: proj2} hold. By \eqref{stopcr}, we have that \eqref{ineq: proj3} holds.
}

% \begin{algorithm} 
% \caption{Alternating direction method of multiplier (ADMM) to solve \eqref{proj}\eqref{def: proj}}\label{alg: ADMM}
% \begin{description}
% \item[{\bf Step 1.}] Initialize primal variable $\xbf^1$, dual variable $\lambda^1$, penalty parameter $\rho > 0$ and accuracy parameter $\eh > 0$.
% \item[{\bf Step 2.}] Invoke the following operations for $k=1,2,...$
% \begin{description}
%  \item[{\bf Step 2.1}] $ \x^{k+1} = \mbox{arg}\min\limits_{\x \in X} \frac{1}{2} \| \x - (\x^k - \eta G^k) \|^2 + (\lambda^k)^T(\x - \y^k) + \frac{\rho}{2} \| \x - \y^k \|^2  $
%  \item[{\bf Step 2.2}] $ \y^{k+1} = \mbox{arg}\min\limits_{\y} \phi(\y - \x^k) + (\lambda^k)^T(\x^{k+1} - \y) + \frac{\rho}{2} \| \x^{k+1} - \y \|^2  $
%  \item[{\bf Step 2.3}] $\lambda^{k+1} = \lambda^k + \rho ( \x^{k+1} - \y^{k+1} ) $.
% \item[{\bf Step 2.4}] Stop when $\rho \| \y^{k+1} - \y^k \|_\infty \le \eh$ and $ \| \x^{k+1} - \y^{k+1} \|_1 \le \eh $. 
% \end{description}
% \item[{\bf Step 3.}] Output the triple $(\x^{k+1}$, $\y^{k+1}$, $\lambda^{k+1})$.
% \end{description}
% \end{algorithm} 
Next is the gradient descent with backtracking used in the numerical experiment.
\begin{algorithm} 
\caption{Projected gradient method with backtracking}\label{alg: GD backtracking}
\begin{description}
\item[{\bf Initialization Step}] Set $c_1 = \dfrac{1}{4}, \beta = \dfrac{1}{2}, \epsilon = 1e-10$. Initialize $\xbf^0 = \bm{0}$.
\item[{\bf Main Step}] Set $\alpha_k \equiv \alpha = 1$;\\
{\bf While} {$f(P_X (\xbf^k - \alpha_k \nabla f(\xbf^k))) > f(\xbf^{k}) + c_1 (\nabla f(\xbf^{k}))^T (P_X (\x^k - \alpha_k \nabla f(\x^k)) - \x^k) $}\\
$\alpha_k = \beta\alpha_k$
\item[{\bf Stopping}] Stop if $\lVert \xbf^{k+1} - \xbf^{k} \rVert_1 \le \epsilon$ and output $\xbf^{k}$; otherwise replace $k$ by $k+1$ and repeat the Main Step.
\end{description}
\end{algorithm}

\end{document}